\renewcommand{\leq}{\leqslant}
\renewcommand{\geq}{\geqslant}
\newtheoremstyle{mythm}
{.5\baselineskip}	
{.5\baselineskip}	
{}		
{}		
{\bf}	
{. }		
{ }		
{}		
\theoremstyle{mythm}
\newtheorem{theorem}{Theorem}	
\newtheorem{lemma}[theorem]{Lemma}
\newtheorem{corollary}[theorem]{Corollary}
\newtheorem{definition}[theorem]{Definition}
\newtheorem{example}[theorem]{Example}
\newtheorem{remark}[theorem]{Remark}
\newtheorem{question}{Question}
\newtheoremstyle{myclaim}
{.5\baselineskip}	
{.5\baselineskip}	
{}		
{}		
{\sc}	
{. }		
{ }		
{}		
\theoremstyle{myclaim}
\newcommand{\astfill}{\noindent\xleaders\hbox{$\ast$}\hfill\kern0pt}
\definecolor{SoftRed}{HTML}{DD2222}
\title{The Hurewicz Property and the Vietoris Hyperspace}
\author[C. Caruvana]{Christopher Caruvana}
\address{School of Sciences,
Indiana University Kokomo,
2300 S. Washington St.,
Kokomo, IN 46902,
United States of America.}
\email{chcaru@iu.edu}
\date{\today}
\subjclass[2010]{Primary 54D20; Secondary 54B20, 54C35.}
\keywords{Hurewicz selection principle, Vietoris topology, \(\omega\)-covers, \(k\)-covers, groupable covers, Reznichenko property, countable fan-tightness, \(C_p\)-theory, \(C_k\)-theory.}
\begin{document}

\maketitle

\begin{abstract}
    In this note, we characterize when the Vietoris space of compact subsets of a given space
    has the Hurewicz property in terms of a selection principle on the given space itself using \(k\)-covers and the notion
    of groupability introduced by Ko{\v{c}}inac and Scheepers.
    We comment that the same technique establishes another equivalent condition
    to a space being Hurewicz in each of its finite powers.
    We end with some characterizations involving spaces of continuous functions and answer
    a question posed by Ko{\v{c}}inac.
\end{abstract}

\section{Introduction}

The Hurewicz property was introduced in \cite{Hurewicz} and was conjectured by Hurewicz to characterize the
property of being \(\sigma\)-compact.
Though, as discussed in \cite{TsabanTheBook}, this conjecture is false, investigations about
the Hurewicz property continue.
In \cite{COOC7}, Ko{\v{c}}inac and Scheepers introduce the notion of groupability to characterize the Hurewicz property
using other types of selection principles that mirror Menger's property.
An important ingredient in this line of investigation is the game-theoretic characterization of the
Hurewicz property established in \cite{COOC1}.
Also in \cite{COOC7}, a selection principle which is equivalent to every finite power of a space
having the Hurewicz property is established.
This adds to the list of characterizations of certain properties of \(X\) in every finite power;
for other characterizations of this flavor involving traditional selection principles,
see \cite{GerlitsNagy,JustMillerScheepers,SakaiCDoublePrime}.

Characterizations of the Hurewicz property involving hyperspaces have been obtained in recent years.
Characterizations in terms of certain properties on \(C_p(X)\) for certain classes of spaces \(X\) have been done
in \cite{ScheepersSequentialHurewicz} and \cite{OsipovFunctionalHurewicz}.
Following generalizations of \(\pi\)-networks in \cite{Li}, a characterization of the Hurewicz property on certain hyperspaces
of \(X\) in terms of further generalized notions of \(\pi\)-networks of \(X\) can be found in \cite{CruzCastillo}.
Other characterizations can be found in \cite{DasSamantaChandra} and \cite{TsabanBaire}.
For generalized versions of groupability, see \cite{DasSamantaChandra2019}.

In this note, we continue this work by characterizing the Hurewicz property on certain
hyperspaces in terms of selective covering properties on \(X\) using the notion of groupability,
extending on the family of relationships established in \cite{CHVietoris}.
The first result is a general criterion which guarantees the equivalence of the Hurewicz property on a hyperspace with
a certain selective property on the space \(X\).
As a consequence, we characterize the Hurewicz property on the Vietoris space of compact subsets of \(X\)
in terms of \(k\)-covers of \(X\) and groupability.

The original characterization involving finite powers of \(X\) was done using \(\omega\)-covers,
which cover finite subsets of \(X\).
Since finite powers of \(X\) clearly cover finite subsets, relationships between properties of covers of finite powers
and \(\omega\)-covers are not surprising.
However, the finite powers of \(X\) carry more information than finite subsets, alone.
Here, we establish a similar equivalence involving \(\omega\)-covers with the set
of finite subsets of \(X\) adequately topologized.

We end by showing that the selective properties discussed above are also characterized by the property that the space
\(C_p(X)\) or \(C_k(X)\) is Reznichenko and has countable (strong) fan-tightness.

\section{Preliminaries}

Throughout, by \emph{space} we mean \emph{topological space} and follow definitions
and notation of \cite{Engelking}.
For a space \(X\), we will use \([X]^{<\omega}\) to denote the set of all non-empty finite subsets
of \(X\) and \(K(X)\) to denote the set of non-empty compact subsets of \(X\).
To avoid trivialities, we will assume our spaces \(X\) to be infinite or non-compact, wherever appropriate.
We also assume all spaces are Hausdorff.

As usual, for sets \(A\) and \(B\), we will use the notation \(A^B\) to be the set of all functions \(B \to A\).
So, for example, \(X^\omega\) is the set of all sequences in \(X\).
When dealing with a finite power \(\omega^{n+1}\) of \(\omega\) where \(n \in \omega\),
we may use the notation \(m_j\) in place of \(m(j)\) for \(j \in n+1 = \{ 0,1,\ldots, n \}\).
The symbol \(\langle\rangle\) denotes the empty sequence.
We will also use \(X^{<\omega}\) to denote the set \(\bigcup_{n \in \omega} X^{n}\) of finite sequences of \(X\) and,
if the base \(X\) is written with a few symbols, we will use
\((X)^{<\omega}\) in place of \(X^{<\omega}\) to avoid confusion.

By a \emph{topological operator} we mean a function on the class of
topological spaces that produces a collection of sets related to the topological structure of the input.
Some well-known topological operators that we'll be using in this paper are recalled below.
\begin{itemize}
    \item
    \(\mathscr T\): For a space \(X\), \(\mathscr T_X\)
    is the collection of all open sets \(U \subseteq X\) where \(U \neq \emptyset\) and \(U \neq X\).
    \item
    \(\mathcal O\): For a space \(X\), \(\mathcal O_X\) is the
    collection of non-trivial open covers of \(X\); that is, all \(\mathscr U \subseteq \mathscr T_X\)
    with \(X = \bigcup\mathscr U\).
    \item
    \(\Omega\): For a space \(X\), \(\Omega_X\) is the collection of \(\omega\)-covers of \(X\);
    that is, all \(\mathscr U \in \mathcal O_X\) so that,
    \[(\forall F \in [X]^{<\omega})(\exists U \in \mathscr U) \ F \subseteq U.\]
    \item
    \(\mathcal K\): For a space \(X\), \(\mathcal K_X\) is the collection of \(k\)-covers of \(X\);
    that is, all \(\mathscr U \in \mathcal O_X\) so that
    \[(\forall K \in K(X))(\exists U \in \mathscr U) \ K \subseteq U.\]
\end{itemize}
\begin{definition}
    For a space \(X\), let \(\mathbb K(X)\) denote the set \(K(X)\)
    endowed with the Vietoris topology; that is, the topology generated by sets of the form
    \(\{ K \in \mathbb K(X) : K \subseteq U \}\) and \(\{ K \in \mathbb K(X) : K \cap U \neq \emptyset\}\) for \(U \subseteq X\) open.
    For \(U_1 , \ldots , U_n\) open in \(X\), define
    \[[U_1, \ldots, U_n] = \left\{ K \in \mathbb K(X) : K \subseteq \bigcup_{j=1}^n U_j
    \text{ and } (\forall j)\left[ K \cap U_j \neq \emptyset \right] \right\}.\]
    These sets form a basis for the topology on \(\mathbb K(X)\).

    When working with a fixed subspace \(\mathbb Y \subseteq \mathbb K(X)\), we will use
    \([U_1,\ldots, U_n]\) to mean \([U_1,\ldots, U_n] \cap \mathbb Y\).
\end{definition}
For a detailed treatment of the Vietoris topology, see \cite{MichaelTopologies}.
\begin{definition}
    We will let \(\mathcal P_{\mathrm{fin}}(X)\) denote the set \([X]^{<\omega}\) with the topology
    it inherits as a subspace of \(\mathbb K(X)\).
\end{definition}
\begin{definition}
    For a space \(X\), by an \emph{ideal of closed sets}, we mean a collection \(\mathcal A\) of proper closed subsets
    of \(X\) with \([X]^{<\omega} \subseteq \mathcal A\) so that the following two properties hold:
    \begin{itemize}
        \item
        For \(A , B \in \mathcal A\), \(A \cup B \in \mathcal A\) as long as \(A \cup B \neq X\).
        \item
        For \(A \in \mathcal A\), if \(B \subseteq A\) is closed and non-empty, then \(B \in \mathcal A\).
    \end{itemize}
    Such colletions are often referred to as bases for ideals or bornologies (see \cite{BornologyBook}).
\end{definition}
The two ideals of closed sets we'll be most interested in are \([X]^{<\omega}\) and \(K(X)\).
\begin{definition}
    For a space \(X\) and an ideal \(\mathcal A\) of closed sets, we define the set of \(\mathcal A\)-covers,
    denoted by \(\mathcal O_X(\mathcal A)\), to be all \(\mathscr U \in \mathcal O_X\) so that
    \[(\forall A \in \mathcal A)(\exists U \in \mathscr U)\ A \subseteq U.\]
\end{definition}
Note that \(\Omega_X = \mathcal O_X([X]^{<\omega})\) and \(\mathcal K_X = \mathcal O_X(K(X))\).
Moreover, since \([X]^{<\omega}\) and \(K(X)\) can be defined topologically, one could hope for a convenient
operator to be included in our list of operators above.
However, not all ideals of closed sets can necessarily be defined without referencing the particular topology
they are coming from, so we will not be able to drop the subscript for general \(\mathcal O_X(\mathcal A)\).

\begin{remark}
    Observe that, since we define \(\mathcal O_X(\mathcal A)\) to consist of non-trivial open covers
    and \(\mathcal A\) is an ideal of closed sets, each \(\mathscr U \in \mathcal O_X(\mathcal A)\) has the property that
    \(\mathscr U \setminus \mathscr F \in \mathcal O_X(\mathcal A)\) for every \(\mathscr F \in [\mathscr U]^{<\omega}\).
    Moreover, every member of \(\mathcal O_X(\mathcal A)\) is infinite.
\end{remark}
\begin{definition}
    We say that a space \(X\) has the \emph{Hurewicz property} (or that \(X\) is \emph{Hurewicz})
    if, for any sequence \(\langle \mathscr U_n : n \in \omega \rangle \in\mathcal O_X^\omega\),
    there exists a sequence \(\langle \mathscr F_n : n \in \omega \rangle\)
    so that \(\mathscr F_n \in \left[ \mathscr U_n \right]^{<\omega}\) for each \(n \in \omega\) and
    \[(\forall x \in X)(\exists m \in \omega)(\forall n \geq m)\ x \in \bigcup \mathscr F_n.\]
\end{definition}
Note that we are avoiding the use of the commonly used notation \(\mathsf U_{\mathrm{fin}}(\mathcal O, \Gamma)\)
to avoid complications in which \(\bigcup \mathscr F_n = X\) for some \(n \in \omega\) or
the set \(\left\{ \bigcup \mathscr F_n : n \in \omega \right\}\) is finite.

We will also generally be careful to distinguish between sequences and countable sets, as can be observed
in our treatment of the following definition.
\begin{definition}[\cite{COOC1}] \label{def:Selection}
    For topological operators or collections \(\mathcal A\) and \(\mathcal B\),
    \begin{itemize}
        \item
        \(\mathsf{S}_{\mathrm{fin}}(\mathcal A, \mathcal B)\) denotes the following selection principle:
        for each sequence \(\langle \mathscr U_n : n \in \omega \rangle\) of elements of \(\mathcal A\),
        there is a sequence \(\langle \mathscr F_n : n \in \omega \rangle\), where each \(\mathscr F_n\) is a finite subset
        of \(\mathscr U_n\), so that \(\bigcup_{n\in\omega} \mathscr F_n \in \mathcal B\).
        \item
        \(\mathsf S_1(\mathcal A, \mathcal B)\) denotes the following selection principle:
        for each sequence \(\langle \mathscr U_n : n \in \omega \rangle\) of elements of \(\mathcal A\),
        there is a sequence \(\langle A_n : n \in \omega \rangle\), where \(A_n \in \mathscr U_n\) for each \(n\in\omega\),
        so that \(\{ A_n : n \in \omega \} \in \mathcal B\).
    \end{itemize}
    Observe that the selection principles can be concisely captured using quantifiers:
    \[
        \mathsf{S}_{\mathrm{fin}}(\mathcal A, \mathcal B) \equiv
        \left(\forall \mathscr U \in \mathcal A^\omega\right)
        \left( \exists \mathscr F \in \prod_{n\in\omega} \left[ \mathscr U_n \right]^{<\omega} \right)
        \ \bigcup_{n\in\omega} \mathscr F_n \in \mathcal B
    \]
    and
    \[
        \mathsf S_1(\mathcal A, \mathcal B) \equiv
        \left(\forall \mathscr U \in \mathcal A^\omega\right)
        \left( \exists A \in \prod_{n\in\omega} \mathscr U_n \right)
        \ \{A_n : n \in \omega \} \in \mathcal B.
    \]
    We will use the notation \(X \models \mathsf{S}_{\mathrm{fin}}(\mathcal A, \mathcal B)\)
    or \(X \models \mathsf{S}_1(\mathcal A, \mathcal B)\) to denote
    that the corresponding selection principle holds for \(X\).
\end{definition}
Note that \(\mathsf S_{\mathrm{fin}}(\mathcal O,\mathcal O)\) is the \emph{Menger propety}
and that \(\mathsf S_1(\mathcal O, \mathcal O)\) is the \emph{Rothberger property}.
We will call the selection principles \(\mathsf S_{\mathrm{fin}}(\Omega,\Omega)\) and
\(\mathsf S_{\mathrm{fin}}(\mathcal K, \mathcal K)\) the \emph{\(\omega\)-Menger} and \emph{\(k\)-Menger}
properties, respectively.
Likewise, we call the selection principles \(\mathsf S_1(\Omega,\Omega)\) and \(\mathsf S_1(\mathcal K, \mathcal K)\)
the \emph{\(\omega\)-Rothberger} and \emph{\(k\)-Rothberger} properties, respectively.
In any scenario in which we say \(X\) has the \(\mathfrak P\) property, we may also say that
\(X\) is \(\mathfrak P\).
\begin{remark}
Note that the Hurewicz property implies the Menger property.
\end{remark}
As discussed in \cite{COOC7}, the property (\(\ast\)) defined by Gerlits and Nagy \cite{GerlitsNagy}
is shown in \cite{NowikScheepersWeiss} to be equivalent
to the property of being both Hurewicz and Rothberger.\footnote{The results are stated relative to subsets of the real line,
but the proofs lend themselves to greater generality.}
So, following \cite{COOC7}, we take this as our definition.
\begin{definition}
    We say that a space \(X\) is \emph{Gerlits-Nagy} if \(X\) is both Hurewicz and Rothberger.
\end{definition}

The Hurewicz property has a corresponding game.
\begin{definition}
    The \emph{Hurewicz game} on a space \(X\) is played as follows.
    In the \(n^{\mathrm{th}}\) inning, One chooses \(\mathscr U_n \in \mathcal O_X\)
    and Two responds with some \(\mathscr F_n \in [\mathscr U_n]^{<\omega}\).
    Two is declared the winner if
    \[(\forall x \in X)(\exists m \in \omega)(\forall n \geq m)\ x \in \bigcup \mathscr F_n.\]
    Otherwise, One wins.
\end{definition}
Like above, we are avoiding the use of a general game schema, \(\mathsf G_{\mathrm{ufin}}(\mathcal A, \mathcal B)\),
as defined in \cite{ChandraAlam}, in which case the game \(\mathsf G_{\mathrm{ufin}}(\mathcal O, \Gamma)\)
could be seen as the Hurewicz game.
As before, such a formulation would require care, as the authors observe, to account for finite selections which cover the space.

\begin{definition}
    A \emph{strategy} for One in the Hurewicz game on \(X\) is a function
    \[\sigma : \left( \left[\mathcal O_X\right]^{<\omega} \right)^{<\omega} \to \mathcal O_X.\]
    The strategy \(\sigma\) is said to be \emph{winning} if, for any sequence \(\langle \mathscr F_n : n \in \omega \rangle\)
    with \[\mathscr F_n \in \left[ \sigma\left( \left\langle \mathscr F_j : j < n \right\rangle \right) \right]^{<\omega}\]
    for every \(n \in \omega\), then
    \[(\exists x \in X)(\forall m \in \omega)(\exists n \geq m)\ x \not\in \bigcup \mathscr F_n.\]
\end{definition}

Though \cite[Thm. 27]{COOC1} is stated for sets of reals, the proof extends to arbitrary spaces, so we state it
in that level of generality here.
Recall that the main idea of the proof is to use the fact that the Hurewicz property necessitates that a space be
Lindel{\"{o}}f and then to use countable sub-covers of open covers corresponding to a given strategy for One to have Two
respond with initial segments of those countable covers.
Looking at the entire tree of partial plays in this scenario, we can apply the Hurewicz property
on the tree to produce a play which beats the given strategy for One since the Hurewicz property
generates a cover that covers every point by all but finitely members of the selection.
\begin{theorem}[{\cite[Thm. 27]{COOC1}}] \label{thm:HurewiczGameChar}
    A space \(X\) has the Hurewicz property if and only if One does not have a winning strategy in the Hurewicz game
    on \(X\).
\end{theorem}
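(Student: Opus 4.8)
The plan is to prove the two implications separately, obtaining the non-trivial direction via the tree-of-plays argument sketched before the statement. For the easy direction, \emph{if One has no winning strategy then \(X\) is Hurewicz}, I would argue the contrapositive: suppose \(X\) fails the Hurewicz property, witnessed by a sequence \(\langle \mathscr U_n : n \in \omega\rangle \in \mathcal O_X^\omega\) such that for every choice of \(\mathscr F_n \in [\mathscr U_n]^{<\omega}\) one has \((\exists x \in X)(\forall m)(\exists n \ge m)\ x \notin \bigcup \mathscr F_n\). Then the strategy \(\sigma\) for One that ignores Two's moves and plays \(\mathscr U_n\) in inning \(n\) is winning, since any play compatible with \(\sigma\) has One enumerate the \(\mathscr U_n\)'s and is therefore, by the choice of the witnessing sequence, lost by Two.

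For the other direction, \emph{if \(X\) is Hurewicz then One has no winning strategy}, fix a strategy \(\sigma\) for One; I will construct a \(\sigma\)-compatible play that Two wins. The first step is to recall that Hurewicz implies Menger, which implies Lindel\"of (apply \(\mathsf S_{\mathrm{fin}}(\mathcal O,\mathcal O)\) to a constant sequence to extract a countable subcover of an arbitrary open cover), so every open cover of \(X\) has a countable subcover. Next I would recursively build, for each \(s \in \omega^{<\omega}\), the open cover \(\mathscr U_s \in \mathcal O_X\) that One plays in inning \(|s|\) along the branch coded by \(s\), together with a fixed enumeration \(\langle U^s_k : k \in \omega\rangle\) of a countable subcover of \(\mathscr U_s\). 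Writing \(\mathscr F^t_j := \{U^t_k : k \le j\}\) for the ``initial segment of length \(j+1\)'' response to \(\mathscr U_t\), the recursion is \(\mathscr U_{\langle\rangle} := \sigma(\langle\rangle)\) and, for \(s = \langle m_0,\ldots,m_{n-1}\rangle\),
\[
    \mathscr U_s := \sigma\big(\big\langle \mathscr F^{s\restriction 0}_{m_0},\ \mathscr F^{s\restriction 1}_{m_1},\ \ldots,\ \mathscr F^{s\restriction (n-1)}_{m_{n-1}} \big\rangle\big),
\]
which is legitimate since every proper initial segment of \(s\) is shorter than \(s\); note that each \(\mathscr F^t_j\) is a legal move for Two, being a finite subset of \(\mathscr U_t\).

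Now \(\omega^{<\omega}\) is countable, so I would fix a bijection \(i \mapsto s_i\) of \(\omega\) with \(\omega^{<\omega}\) and apply the Hurewicz property to the sequence of countable subcovers \(\big\langle \{U^{s_i}_k : k \in \omega\} : i \in \omega\big\rangle \in \mathcal O_X^\omega\), obtaining finite \(\mathscr G_s \subseteq \{U^s_k : k \in \omega\}\) with \((\forall x \in X)(\exists M)(\forall i \ge M)\ x \in \bigcup \mathscr G_{s_i}\). For each \(s\) choose \(m_s \in \omega\) with \(\mathscr G_s \subseteq \mathscr F^s_{m_s}\) (possible since \(\mathscr G_s\) is finite), and define \(b \in \omega^\omega\) recursively by \(b(n) := m_{b\restriction n}\). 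Unwinding the definitions and using \((b\restriction n)\restriction i = b \restriction i\), the play in which One plays \(\mathscr U_{b\restriction n}\) and Two plays \(\mathscr F^{b\restriction n}_{b(n)}\) in inning \(n\) is compatible with \(\sigma\). To see Two wins it, fix \(x \in X\) and take \(M\) as above; since the nodes \(b\restriction 0, b\restriction 1, \ldots\) are pairwise distinct, only finitely many of them appear among \(s_0,\ldots,s_{M-1}\), so for all but finitely many \(n\) we get \(x \in \bigcup \mathscr G_{b\restriction n} \subseteq \bigcup \mathscr F^{b\restriction n}_{b(n)}\). Hence \((\forall x)(\exists m)(\forall n \ge m)\ x \in \bigcup \mathscr F^{b\restriction n}_{b(n)}\), so \(\sigma\) is not winning.

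The main obstacle is the bookkeeping of the tree construction: making the recursion for \(\mathscr U_s\) genuinely well founded, verifying that the branch \(b\) really traces out a \(\sigma\)-compatible play, and — the crucial observation — noting that since each node of \(\omega^{<\omega}\) occurs exactly once in the enumeration, the indices of the nodes along any fixed branch tend to infinity. This last point is precisely what converts the ``eventually in \(i\)'' conclusion supplied by the Hurewicz property into the ``eventually in the inning number'' conclusion needed for Two to win the game.
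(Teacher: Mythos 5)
Your argument is correct and is exactly the approach the paper indicates for this cited result: the easy direction via a predetermined strategy from a witnessing sequence, and the hard direction via Lindel\"of-ness, countable subcovers with initial-segment responses, the tree of partial plays indexed by \(\omega^{<\omega}\), and a single application of the Hurewicz property to the countably many covers along the tree, with the key observation that the nodes along a branch have indices tending to infinity. The paper only sketches this (deferring to Scheepers' Theorem 27), and your write-up fills in that sketch faithfully.
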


There are also corresponding games for the standard selection principles \(\mathsf S_{\mathrm{fin}}(\mathcal A, \mathcal B)\)
and \(\mathsf S_1(\mathcal A, \mathcal B)\) that are widely studied in the literature.
We will recall here only the elements of selection game theory relevant to this paper.
\begin{definition}
    Let \(\mathcal A\) and \(\mathcal B\) be collections.
    The \emph{finite-selection game} \(\mathsf G_{\mathrm{fin}}(\mathcal A, \mathcal B)\) is played
    as follows.
    In the \(n^{\mathrm{th}}\) inning, One chooses \(A_n \in \mathcal A\) and Two
    responds with \(\mathscr F_n \in [A_n]^{<\omega}\).
    Two is declared the winner if \(\bigcup_{n\in\omega} \mathscr F_n \in \mathcal B\).
    Otherwise, One wins.
\end{definition}
\begin{definition}
    Let \(\mathcal A\) and \(\mathcal B\) be collections.
    The \emph{single-selection game} \(\mathsf G_1(\mathcal A, \mathcal B)\) is played
    as follows.
    In the \(n^{\mathrm{th}}\) inning, One chooses \(A_n \in \mathcal A\) and Two
    responds with \(x_n \in A_n\).
    Two is declared the winner if \(\{x_n : n \in \omega \} \in \mathcal B\).
    Otherwise, One wins.
\end{definition}
In this paper, we will only be studying two strategy types for One.
\begin{definition}
    We define two strategy types for One.
    \begin{itemize}
    \item
    A \emph{strategy for One} in \(\mathsf{G}_1(\mathcal A, \mathcal B)\) is a function
    \(\sigma:(\bigcup \mathcal A)^{<\omega} \to \mathcal A\).
    A strategy \(\sigma\) for One is called \emph{winning} if whenever \(x_n \in \sigma\langle x_k : k < n \rangle\)
    for all \(n \in \omega\), \(\{x_n: n\in\omega\} \not\in \mathcal B\).
    If One has a winning strategy, we write \(\mathrm{I} \uparrow \mathsf{G}_1(\mathcal A, \mathcal B)\).
    \item
    A \emph{predetermined strategy} for One is a strategy which only considers the current turn number.
    We call this kind of strategy predetermined because One is not reacting to Two's moves,
    they are just running through a script.
    Formally, it is a function \(\sigma: \omega \to \mathcal A\).
    If One has a winning predetermined strategy, we write
    \(\mathrm{I} \underset{\mathrm{pre}}{\uparrow} \mathsf{G}_1(\mathcal A, \mathcal B)\).
    \end{itemize}
    These definitions can be extended to \(\mathsf{G}_{\mathrm{fin}}(\mathcal A, \mathcal B)\) in the obvious way.
\end{definition}
\begin{remark} \label{rmk:SelectionPrinciple}
    In general, \(\textsf{S}_\square(\mathcal A, \mathcal B)\) holds if and only if \(\text{I} \underset{\text{pre}}{\not\uparrow} \textsf{G}_\square(\mathcal{A},\mathcal{B})\) where \(\square \in \{1 , \text{fin} \}\).
    See \cite[Prop. 15]{ClontzDualSelection}.
\end{remark}

\begin{definition}[\cite{COOC7}]
    Let \(\mathfrak C\) be a class.
    We say that \(\mathscr C \in \mathfrak C\) is \emph{groupable} if there exists \(\phi : \mathscr C \to \omega\) so that
    \(\phi^{-1}(n)\) is finite for each \(n \in \omega\) and,
    for any infinite \(J \subseteq \omega\), \(\bigcup \{ \phi^{-1}(n) : n \in J \} \in \mathfrak C\).
    Then, we set \[\mathfrak C^{\mathrm{gp}} = \{ \mathscr C \in \mathfrak C : \mathscr C \text{ is groupable} \}.\]

    If \(\mathfrak O\) is a topological operator, we will let \(\mathfrak O^{\mathrm{gp}}\) be the topological operator
    which generates the groupable elements of \(\mathfrak O_X\) for a space \(X\).
\end{definition}
\begin{remark}
    Note that any groupable element must be countable.
\end{remark}
Originally, the following theorem appeared as a longer list of equivalent conditions, all assuming that \(X\) is
an \(\epsilon\)-space (a space in which every \(\omega\)-cover has a countable subset which is an \(\omega\)-cover).
However, such a hypothesis isn't necessary for the following theorem since both conditions of the equivalence imply
that the space \(X\) is an \(\epsilon\)-space.
\begin{theorem}[{\cite[Thm. 16]{COOC7}}] \label{thm:OmegaGroupChar}
    For a space \(X\), every finite power of \(X\) has the Hurewicz property
    if and only if \(X \models \mathsf S_{\mathrm{fin}}(\Omega , \Omega^{\mathrm{gp}})\).
\end{theorem}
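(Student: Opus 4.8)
The plan is to pass back and forth between \(\omega\)-covers of \(X\) and open covers of the finite powers \(X^{k}\), carrying the adjective ``groupable'' along, and then to reduce everything to the companion result of \cite{COOC7} that a space \(Z\) is Hurewicz precisely when \(Z \models \mathsf S_{\mathrm{fin}}(\mathcal O, \mathcal O^{\mathrm{gp}})\) (this is what frees us from having to synchronise the \emph{grouping} of an output cover with the innings). The translation has two halves. One is immediate: if \(\mathscr U \in \Omega_{X}\) and \(k \geq 1\), then, since the coordinate set of any \(k\)-tuple from \(X\) lies in \([X]^{<\omega}\), the family \(\{\, U^{k} : U \in \mathscr U \,\}\) is a non-trivial open cover of \(X^{k}\). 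For the other, given \(\mathscr W \in \mathcal O_{X^{k}}\) I would set
\[
\mathscr U(\mathscr W) = \left\{ U \subseteq X \text{ open} : U^{k} \subseteq \bigcup \mathscr A \text{ for some } \mathscr A \in [\mathscr W]^{<\omega} \right\}
\]
and check \(\mathscr U(\mathscr W) \in \Omega_{X}\): given \(F \in [X]^{<\omega}\), cover the finite set \(F^{k} \subseteq X^{k}\) by basic open boxes each refining a member of \(\mathscr W\), and for each \(x \in F\) let \(O_{x}\) be the intersection of those finitely many box-factors that contain \(x\) in the corresponding coordinate; then \(U = \bigcup_{x \in F} O_{x}\) is open, contains \(F\), and testing an arbitrary \(k\)-tuple of \(U\) against the boxes shows \(U^{k}\) is covered by the chosen boxes, hence by finitely many members of \(\mathscr W\).

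For \((\Leftarrow)\), I would assume \(X \models \mathsf S_{\mathrm{fin}}(\Omega, \Omega^{\mathrm{gp}})\), fix \(k\), and take \(\langle \mathscr W_{n} : n \in \omega \rangle \in \mathcal O_{X^{k}}^{\omega}\). Applying the selection principle to \(\langle \mathscr U(\mathscr W_{n}) : n \in \omega \rangle\) yields finite \(\mathscr H_{n} \subseteq \mathscr U(\mathscr W_{n})\) with \(\mathscr D = \bigcup_{n} \mathscr H_{n} \in \Omega_{X}^{\mathrm{gp}}\), say with grouping \(\phi\). For each \(U \in \mathscr D\) I fix a finite \(\mathscr A_{U} \subseteq \mathscr W_{n}\) (for a chosen inning \(n\) with \(U \in \mathscr H_{n}\)) witnessing membership in \(\mathscr U(\mathscr W_{n})\), and let \(\mathscr G_{n} \in [\mathscr W_{n}]^{<\omega}\) collect the \(\mathscr A_{U}\) whose attached inning is \(n\). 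Transporting \(\phi\) through \(U \mapsto \mathscr A_{U}\) exhibits \(\bigcup_{n} \mathscr G_{n}\) as a countable open cover of \(X^{k}\) grouped into finite pieces every infinite sub-union of which still covers \(X^{k}\): for infinite \(J\), \(\bigcup_{j \in J} \phi^{-1}(j) \in \Omega_{X}\), and the easy half of the dictionary turns this into a cover of \(X^{k}\). Hence \(\bigcup_{n} \mathscr G_{n} \in \mathcal O_{X^{k}}^{\mathrm{gp}}\), so \(X^{k} \models \mathsf S_{\mathrm{fin}}(\mathcal O, \mathcal O^{\mathrm{gp}})\) and \(X^{k}\) is Hurewicz.

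For \((\Rightarrow)\), I would assume every finite power of \(X\) is Hurewicz; then each \(X^{k}\) is Lindel\"{o}f, so \(X\) is an \(\epsilon\)-space and a given sequence \(\langle \mathscr U_{n} : n \in \omega \rangle\) of \(\omega\)-covers of \(X\) may be taken countable. For each \(k\), apply \(X^{k} \models \mathsf S_{\mathrm{fin}}(\mathcal O, \mathcal O^{\mathrm{gp}})\) to \(\langle \{\, U^{k} : U \in \mathscr U_{n} \,\} : n \in \omega \rangle\) to get finite \(\mathscr E^{k}_{n} \subseteq \mathscr U_{n}\) with \(\bigcup_{n} \{\, U^{k} : U \in \mathscr E^{k}_{n} \,\} \in \mathcal O_{X^{k}}^{\mathrm{gp}}\), and put \(\mathscr F_{n} = \bigcup_{k \leq n} \mathscr E^{k}_{n} \in [\mathscr U_{n}]^{<\omega}\). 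Since enlarging selections and discarding finitely many innings preserve groupable open covers of \(X^{k}\), for \emph{every} \(k\) the family \(\bigcup_{n} \{\, U^{k} : U \in \mathscr F_{n} \,\}\) is still a groupable open cover of \(X^{k}\); unwinding the definition (reading an \(F \in [X]^{<\omega}\) of size \(k\) as a \(k\)-tuple) gives that all but finitely many \(\mathscr F_{n}\) contain a member containing \(F\). It then remains to assemble from these data a single grouping of \(\bigcup_{n}\mathscr F_{n}\) witnessing \(\bigcup_{n}\mathscr F_{n} \in \Omega_{X}^{\mathrm{gp}}\).

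The step I expect to be the real obstacle is precisely that last assembly in \((\Rightarrow)\) --- and, in milder form, the ``transport of \(\phi\)'' in \((\Leftarrow)\). A grouping must be a \emph{partition} of the cover into finite blocks, yet a set useful for a given \(F\) (or for a given point of \(X^{k}\)) may reappear across many inning-blocks, so the naive block structure can leave some \(F\) captured by only finitely many blocks; circumventing this needs either a freshening of the selections, so that the inning-indexed families are an honest partition of their union, or an equivalent careful interleaving of the witnessing groupings for the various powers into one grouping that still captures finite sets of \emph{every} size cofinitely often. The reduction to an \(\epsilon\)-space is what keeps all the relevant families countable so that this bookkeeping goes through, and the Hurewicz game characterisation, Theorem~\ref{thm:HurewiczGameChar}, sits quietly underneath the two appeals to ``Hurewicz \(\iff \mathsf S_{\mathrm{fin}}(\mathcal O, \mathcal O^{\mathrm{gp}})\)'': in the language of Remark~\ref{rmk:SelectionPrinciple} that equivalence is exactly the passage between predetermined and arbitrary strategies for One.
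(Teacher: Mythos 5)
First, note that the paper does not prove this statement at all: it is quoted verbatim from \cite[Thm.\ 16]{COOC7}, with only a remark explaining why the \(\epsilon\)-space hypothesis of the original can be dropped. So the comparison here is between your from-scratch attempt and the external proof in \cite{COOC7}. Your dictionary between \(\omega\)-covers of \(X\) and open covers of \(X^k\) (the family \(\{U^k : U \in \mathscr U\}\) one way, the \(\omega\)-cover \(\mathscr U(\mathscr W)\) via finite unions of basic boxes the other way) is correct and is indeed the standard translation. But the proof as written has a genuine gap, and it is exactly the one you flag: groupability, as defined in this paper and in \cite{COOC7}, requires a finite-to-one function \(\phi\), i.e.\ an honest partition of the cover into finite blocks every infinite sub-union of which is again a cover of the relevant kind. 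In your \((\Leftarrow)\) direction, the transported blocks \(B_j = \bigcup\{\mathscr A_U : U \in \phi^{-1}(j)\}\) overlap, and assigning each \(W\) to \(\min\{j : W \in B_j\}\) can strip from a later block the only member witnessing that a given point of \(X^k\) is covered there; moreover the blocks \(B_j\) are not synchronised with the innings \(n\), so even the direct verification of the Hurewicz selection for \(X^k\) does not follow. You cannot repair this by discarding finitely many elements of \(\mathscr W_n\), since plain open covers of \(X^k\) (unlike \(\omega\)- or \(k\)-covers) need not survive such deletions. In your \((\Rightarrow)\) direction the ``assembly'' of a single grouping of \(\bigcup_n \mathscr F_n\) from the separate groupings for each power is stated as remaining to be done; that interleaving is the actual content of the proof in \cite{COOC7} and is not routine.

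Concretely, the \((\Leftarrow)\) direction can be closed with machinery this paper already builds for its own results: apply Lemma~\ref{lem:gammaLike} with \(\mathcal A = [X]^{<\omega}\) to the sequence \(\langle \mathscr U(\mathscr W_n) : n \in \omega\rangle\). That lemma (via the intersection trick and the disjointification of Lemma~\ref{lem:CDR}) hands you finite, pairwise disjoint, \emph{inning-indexed} selections \(\mathscr G_n \subseteq \mathscr U(\mathscr W_n)\) such that every finite \(F \subseteq X\) is contained in some member of \(\mathscr G_n\) for all large \(n\); pulling back through \(U \mapsto \mathscr A_U\) then yields \(\mathscr H_n \in [\mathscr W_n]^{<\omega}\) witnessing the Hurewicz property of \(X^k\) directly, with no grouping to transport (this is exactly the pattern of Lemma~\ref{lem:MainKLemma} for \(k\)-covers). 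For \((\Rightarrow)\), however, you would still need to carry out the diagonal interleaving across all powers \(k\) with freshened (pairwise disjoint) selections from the \(\omega\)-covers \(\mathscr U_n\) --- here deletion of finitely many elements \emph{is} permitted, by the Remark following the definition of \(\mathcal O_X(\mathcal A)\) --- and that argument is missing. As it stands, the proposal is an accurate road map with the two hardest steps left open, not a proof.
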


\section{Preparations}

The following is an adaption of \cite[Lemma 9 and Cor. 10]{COOC7} using the same zig-zag technique.
\begin{lemma} \label{lem:CDR}
    For any sequence \(\langle \mathscr U_n : n \in \omega \rangle\) of \(\mathcal O^{\mathrm{gp}}_X(\mathcal A)\),
    there exists a sequence \(\langle \mathscr V_n : n \in \omega \rangle\) of \(\mathcal O^{\mathrm{gp}}_X(\mathcal A)\)
    where the set \(\{ \mathscr V_n : n \in \omega \}\) is pairwise disjoint and
    \(\mathscr V_n \subseteq \mathscr U_n\) for every \(n \in \omega\).
\end{lemma}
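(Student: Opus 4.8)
The plan is to replace each \(\mathscr U_n\) by the sequence of finite ``blocks'' coming from a fixed groupability witness, and then to extract the \(\mathscr V_n\) from these blocks by a single diagonal recursion, maintaining a finite record of the open sets already used so that newly chosen blocks avoid everything committed earlier.

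To set this up, for each \(n \in \omega\) I would fix \(\phi_n : \mathscr U_n \to \omega\) with every fibre finite and with \(\bigcup\{\phi_n^{-1}(j) : j \in J\} \in \mathcal O_X(\mathcal A)\) for every infinite \(J \subseteq \omega\), and write \(\mathscr G^n_k = \phi_n^{-1}(k)\). The one structural observation driving the argument is that, since \(\phi_n\) is a function, the blocks \(\{\mathscr G^n_k : k \in \omega\}\) partition \(\mathscr U_n\); in particular they are pairwise disjoint, so any fixed finite set \(\mathscr S\) of open sets is met by at most \(|\mathscr S|\) of the blocks in column \(n\).

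Next I would enumerate \(\omega \times \omega\) as \(\langle (n_s, i_s) : s \in \omega\rangle\) in such a way that, for each fixed \(n\), the values \(i_s\) over \(\{s : n_s = n\}\) run through \(0, 1, 2, \ldots\) in increasing order, and then recurse on \(s\). Starting from \(\mathscr S_0 = \emptyset\), at stage \(s\) with \((n,i) = (n_s, i_s)\) I would choose the least \(k^n_i \in \omega\) with \(k^n_i > k^n_{i-1}\) (dropping this clause when \(i = 0\)) such that \(\mathscr G^n_{k^n_i}\) is disjoint from the finite set \(\mathscr S_s\); such \(k^n_i\) exists by the observation above. Then set \(\mathscr S_{s+1} = \mathscr S_s \cup \mathscr G^n_{k^n_i}\), and at the end put \(J_n = \{k^n_i : i \in \omega\}\) and \(\mathscr V_n = \bigcup_{i \in \omega}\mathscr G^n_{k^n_i} = \bigcup_{k \in J_n}\mathscr G^n_k\).

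Finally I would verify the three conclusions. Clearly \(\mathscr V_n \subseteq \mathscr U_n\). The family \(\{\mathscr V_n : n \in \omega\}\) is pairwise disjoint because every block chosen over the whole recursion is disjoint from every earlier one (by the choice made at its stage) and from every later one (blocks within a column being disjoint, and blocks chosen later being required to miss the committed set \(\mathscr S\)). And each \(J_n\) is an infinite subset of \(\omega\), so \(\mathscr V_n = \bigcup_{k \in J_n}\mathscr G^n_k \in \mathcal O_X(\mathcal A)\), while the enumeration \(\langle \mathscr G^n_{k^n_i} : i \in \omega\rangle\) by finite sets witnesses groupability of \(\mathscr V_n\): for infinite \(I \subseteq \omega\), \(\{k^n_i : i \in I\}\) is an infinite subset of \(\omega\), hence \(\bigcup_{i \in I}\mathscr G^n_{k^n_i} \in \mathcal O_X(\mathcal A)\); thus \(\mathscr V_n \in \mathcal O^{\mathrm{gp}}_X(\mathcal A)\). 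I expect the only genuine obstacle to be ensuring the recursive choice of \(k^n_i\) is always available, which is exactly what the pairwise disjointness of the blocks within each column guarantees; everything else is bookkeeping.
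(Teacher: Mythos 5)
Your proposal is correct and follows essentially the same route as the paper: both fix groupability witnesses, treat the fibres \(\phi_n^{-1}(k)\) as finite blocks, and run a single diagonal recursion over \(\omega\times\omega\) that maintains a finite committed set and selects, for each column, an increasing sequence of block indices whose blocks avoid everything chosen so far. Your justification that only finitely many blocks in a column can meet a finite set (because the fibres of \(\phi_n\) partition \(\mathscr U_n\)) is exactly the point the paper's ``zig-zag'' construction relies on, and the verification of disjointness and groupability is the same.
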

\begin{proof}
    Let \(\phi_n : \mathscr U_n \to \omega\) witness the groupability of \(\mathscr U_n\).
    Fix a bijection \(\beta : \omega \to \omega^2\)
    and use the following notation: \(\beta(j) = \langle \beta(j)_1, \beta(j)_2 \rangle\).
    Let \[\mathscr W_{\beta(0)_1, \beta(0)_2} = \mathcal A_0 = \phi^{-1}_{\beta(0)_1}(0).\]

    For \(m \in \omega\), suppose we have \(\mathcal A_m \in \left[\bigcup_{n\in\omega}\mathscr U_n\right]^{<\omega}\)
    and \(\mathscr W_{\beta(m)_1,\beta(m)_2} \subseteq \mathscr U_{\beta(m)_1}\) defined.
    Let \[M = \min \left\{ n \in \omega : (\forall k \geq n) \ \phi^{-1}_{\beta(m+1)_1}(k) \cap \mathcal A_m = \emptyset \right\}.\]
    Such an \(M\) must exist since \(\mathcal A_m\) is finite
    and \(\phi_{\beta(m+1)_1}\) is finite-to-one.
    Then define
    \[\mathscr W_{\beta(m+1)_1,\beta(m+1)_2} = \phi^{-1}_{\beta(m+1)_1}(M)\]
    and \(\mathcal A_{m+1} = \mathcal A_m \cup \mathscr W_{\beta(m+1)_1,\beta(m+1)_2}.\)

    This recursively defines \(\{\mathscr W_{n,m} : n,m \in \omega \}\) where
    \(\mathscr W_{n,m} \subseteq \mathscr U_n\) for every \(n,m \in \omega\).
    Let \[\mathscr V_n = \bigcup \{ \mathscr W_{n,m} : m \in \omega \}\] and observe that
    \(\mathscr V_n \subseteq \mathscr U_n\).
    Also note that \(\mathscr V_n \in \mathcal O_X^{\mathrm{gp}}(\mathcal A)\) for each \(n\in\omega\)
    by the groupability of \(\mathscr U_n\) and that the family
    \(\{ \mathscr V_n : n \in \omega \}\) is pairwise disjoint by construction.
\end{proof}
The following is a generalization of \cite[Lemma 15]{COOC7}.
\begin{lemma} \label{lem:gammaLike}
    Suppose \(X \models \mathsf{S}_{\mathrm{fin}}(\mathcal O_X(\mathcal A),\mathcal O^{\mathrm{gp}}_X(\mathcal A))\).
    Then, for any sequence \(\langle \mathscr U_n : n \in \omega \rangle\)
    of \(\mathcal O_X(\mathcal A)\), there exists a sequence \(\langle \mathscr F_n : n \in \omega \rangle\)
    so that the following properties hold:
    \begin{itemize}
        \item
        \((\forall n \in \omega)\ \mathscr F_n \in \left[ \mathscr U_n \right]^{<\omega}\),
        \item
        \(\{\mathscr F_n : n \in \omega \}\) is pairwise disjoint, and
        \item
        \(\left(\forall A \in \mathcal A \right)(\exists m \in \omega)(\forall n \geq m)(\exists V \in \mathscr F_n)\ A \subseteq V\).
    \end{itemize}
\end{lemma}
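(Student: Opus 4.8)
The plan is to follow the proof of \cite[Lemma 15]{COOC7}, using as the main tools Lemma \ref{lem:CDR} and the elementary fact that a finite intersection of members of \(\mathcal O_X(\mathcal A)\) is again a member of \(\mathcal O_X(\mathcal A)\).

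First I would make two reductions. Applying the hypothesis to a constant sequence \(\langle \mathscr U, \mathscr U, \ldots\rangle\) shows that every \(\mathscr U \in \mathcal O_X(\mathcal A)\) has a (necessarily countable) subfamily belonging to \(\mathcal O^{\mathrm{gp}}_X(\mathcal A)\); so we may replace each \(\mathscr U_n\) by such a subfamily and thereby assume \(\mathscr U_n \in \mathcal O^{\mathrm{gp}}_X(\mathcal A)\) and \(\mathscr U_n\) countable. Lemma \ref{lem:CDR} then produces \(\mathscr V_n \subseteq \mathscr U_n\) with \(\mathscr V_n \in \mathcal O^{\mathrm{gp}}_X(\mathcal A)\) and \(\{\mathscr V_n : n \in \omega\}\) pairwise disjoint. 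This disjointness is the source of the pairwise disjointness demanded of the \(\mathscr F_n\): once the \(\mathscr F_n\) are built by selecting open sets out of the \(\mathscr V_n\) index by index, they are automatically pairwise disjoint.

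Next, set \(\mathscr V^*_n = \{ V_0 \cap \cdots \cap V_n : V_j \in \mathscr V_j \text{ for } j \le n\}\) (discarding empty intersections); one checks directly that \(\mathscr V^*_n \in \mathcal O_X(\mathcal A)\). Apply the hypothesis to \(\langle \mathscr V^*_n : n \in \omega\rangle\) to get finite \(\mathscr G_n \subseteq \mathscr V^*_n\) with \(\mathscr C := \bigcup_n \mathscr G_n \in \mathcal O^{\mathrm{gp}}_X(\mathcal A)\), witnessed by a finite-to-one \(\phi : \mathscr C \to \omega\). I would isolate the key consequence of groupability: for every \(A \in \mathcal A\), the set \(\{k : (\exists H \in \phi^{-1}(k))\ A \subseteq H\}\) is cofinite, since otherwise \(\bigcup_{k \in J}\phi^{-1}(k)\) would be a member of \(\mathcal O_X(\mathcal A)\) containing no superset of \(A\) for the infinite set \(J\) of indices it misses. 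The point of passing to the intersection-refinements is that each \(H \in \mathscr G_n \subseteq \mathscr V^*_n\) is of the form \(H = V_0^H \cap \cdots \cap V_n^H\) with \(V_i^H \in \mathscr V_i\), so a \emph{single} set \(H\) containing \(A\) certifies a member of \(\mathscr V_i\) containing \(A\) for \emph{every} \(i \le n\); this is what lets one covering event be spread downward over a whole block of indices \(i\).

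Finally I would harvest the \(\mathscr F_n\). Writing \(\ell(H) = \min\{n : H \in \mathscr G_n\}\) and using the representation of \(H\) at level \(\ell(H)\), coarsen \(\phi\) — harmless, since any coarsening of a groupability witness is again one — so that, for the new groups, the quantities \(\ell_k := \max\{\ell(H) : H \in \phi^{-1}(k)\}\) are strictly increasing (possible since \(\mathscr C\) is infinite, so the \(\ell(H)\) are unbounded); then the half-open intervals \((\ell_{k-1}, \ell_k]\) partition \(\omega\). For \(i\) in the \(k\)-th such interval, let \(\mathscr F_i\) consist of the components \(V_i^H\) for those \(H \in \phi^{-1}(k)\) with \(\ell(H) \ge i\). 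Each \(\mathscr F_i\) is a finite subset of \(\mathscr V_i \subseteq \mathscr U_i\), the \(\mathscr F_i\) are pairwise disjoint because the \(\mathscr V_i\) are, and the covering clause should be extracted from the cofiniteness claim together with the spreading mechanism above; a final cosmetic adjustment ensures each \(\mathscr F_n\) is nonempty. The step I expect to be the main obstacle is precisely this last harvesting: arranging the coarsening of \(\phi\) and the assignment of index blocks to groups so that, for a given \(A\), the \emph{witnessing} member of the relevant group (not merely some member of it) lies at a level at least \(i\) and hence actually contributes to \(\mathscr F_i\) — this is what upgrades the conclusion from "for infinitely many \(n\)" to "for all but finitely many \(n\)", and it is exactly where the zig-zag bookkeeping of \cite{COOC7} must be carried out with care.
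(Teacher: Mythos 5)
Your overall route is the paper's: reduce to groupable covers via the constant-sequence trick, disjointify with Lemma \ref{lem:CDR}, pass to the covers of \((n+1)\)-fold intersections, select a groupable \(\mathscr C=\bigcup_n\mathscr G_n\) with witness \(\phi\), and recover the \(\mathscr F_i\) by projecting each intersection onto its components. The cofiniteness consequence of groupability and the ``one intersection certifies a witness at every lower index'' observation are exactly the points the paper's proof turns on.

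The gap is in the final harvesting step, and it is a genuine one rather than mere bookkeeping. With \(\ell_k=\max\{\ell(H):H\in\phi^{-1}(k)\}\) and \(\mathscr F_i=\{V_i^H:H\in\phi^{-1}(k),\ \ell(H)\geq i\}\) for \(i\in(\ell_{k-1},\ell_k]\), the covering clause can fail: if \(\phi^{-1}(k)=\{H_1,H_2\}\) with \(\ell(H_1)=5\), \(\ell(H_2)=100\), and the only member of this group containing \(A\) is \(H_1\), then for \(5<i\leq 100\) the sole contributor to \(\mathscr F_i\) from this group is \(H_2\), which need not contain \(A\). The maximum is the wrong statistic; you need \emph{every} member of the \(k\)-th group to survive to every index that group serves, i.e.\ you must control the minimum of \(\ell\) over the group. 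This is arrangeable: since each \(\mathscr G_n\) is finite, only finitely many \(H\in\mathscr C\) have \(\ell(H)\leq N\) for any \(N\), so (as \(\phi\) is finite-to-one) you may coarsen \(\phi\) into consecutive blocks \(\psi^{-1}(k)=\bigcup\{\phi^{-1}(j):m_k\leq j<m_{k+1}\}\) with \(m_k\) chosen so that every \(H\in\psi^{-1}(k)\) has \(\ell(H)\geq k\); a block-coarsening of a groupability witness is again one. Then set \(\mathscr F_k=\{V_k^H:H\in\psi^{-1}(k)\}\) --- one index per group, no intervals --- which is exactly the paper's assignment \(\mathscr F_n=\{\mathrm{proj}_n(\gamma_{\chi_n(W)}(W)):W\in\phi^{-1}(n)\}\). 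With that replacement your argument closes; the rest of the proposal matches the paper's proof.
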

\begin{proof}
    By \(X \models \mathsf{S}_{\mathrm{fin}}(\mathcal O_X(\mathcal A),\mathcal O^{\mathrm{gp}}_X(\mathcal A))\)
    we can assume that each \(\mathscr U_n\in\mathcal O^{\mathrm{gp}}_X(\mathcal A)\).
    Then, by Lemma \ref{lem:CDR}, we can assume that \(\{ \mathscr U_n : n \in \omega \}\) is pairwise disjoint.
    Fix a bijective enumeration \(\mathscr U_n = \{ U_{n,m} : m \in \omega \}\) for each \(n \in \omega\)
    and let
    \[\mathscr W_n = \left\{ \bigcap_{j=0}^n U_{j,m_j} : m \in \omega^{n+1}\right\} \setminus \{ \emptyset \}.\]
    Also, for \(n \in \omega\), let \(\gamma_n : \mathscr W_n \to \prod_{j=0}^n \mathscr U_j\) be a choice function\footnote{
    For each \(W \in \mathscr W_n\), the set
    \(\left\{ \langle U_0, U_1, \ldots U_n \rangle \in \prod_{j=0}^n \mathscr U_j : W = \bigcap_{j=0}^n U_j \right\}\)
    is non-empty, so we can use the Axiom of Choice to select an element from each of these sets and assign them to the corresponding
    \(W \in \mathscr W_n\).
    }
    so that,
    for each \(W \in \mathscr W_n\),
    \[W = \bigcap_{j=0}^n \mathrm{proj}_j(\gamma_n(W)),\]
    where \(\mathrm{proj}_j\) is the usual projection mapping.
    Notice that \(\mathscr W_n \in \mathcal O_X(\mathcal A)\) for each \(n \in \omega\)
    so we can apply \(\mathsf{S}_{\mathrm{fin}}(\mathcal O_X(\mathcal A),\mathcal O^{\mathrm{gp}}_X(\mathcal A))\) to produce
    \(\mathscr G_n \in \left[ \mathscr W_n \right]^{<\omega}\)
    so that \[\mathscr W : = \bigcup_{n\in\omega} \mathscr G_n \in \mathcal O^{\mathrm{gp}}_X(\mathcal A).\]
    Let \(\phi : \mathscr W \to \omega\) witness the groupability of \(\mathscr W\) and note that
    \[\left(\forall A \in\mathcal A\right)(\exists m \in \omega)(\forall n \geq m)\left(\exists W \in \phi^{-1}(n)\right)\ A \subseteq W.\]
    Then, for each \(n \in \omega\), let \(\chi_n : \phi^{-1}(n) \to \omega\) be so that,
    for each \(W \in \phi^{-1}(n)\),
    \[\chi_n(W) = \min\{ k \in \omega : W \in \mathscr G_k \}.\]
    Finally, set
    \[\mathscr F_n = \left\{ \mathrm{proj}_n\left(\gamma_{\chi_n(W)}(W)\right)
    : W \in \phi^{-1}(n) \right\} \in \left[ \mathscr U_n \right]^{<\omega}.\]

    Since \(\{ \mathscr U_n : n \in \omega \}\) is pairwise disjoint, the collection \(\{ \mathscr F_n : n \in \omega \}\)
    is also pairwise disjoint.
    To finish the proof, let \(A \in \mathcal A\) be arbitrary and let \(m \in \omega\) be so that,
    for each \(n \geq m\), there is some \(W \in \phi^{-1}(n)\) with \(A \subseteq W\).
    Note that, for \(n \geq m\) and \(W \in \phi^{-1}(n)\) with \(A \subseteq W\),
    we have that
    \[A \subseteq W \subseteq \mathrm{proj}_n(\gamma_{\chi_n(W)}(W)) \in \mathscr F_n.\]
    This finishes the proof.
\end{proof}

\section{The General Hyperspace Correspondence}

We will now restrict our attention to ideals of compact sets;
an \emph{ideal of compact sets} of a space \(X\) is any ideal \(\mathcal A\) of closed sets of \(X\)
so that \(\mathcal A \subseteq K(X)\).
We will also use \([X]^\omega\) to denote the set of countably infinite subsets of a set \(X\).
\begin{definition}
    We say that a pair \((\mathfrak T, \mathfrak A)\) is an \emph{adequate context} if \(\mathfrak T\)
    is a class of topological spaces and \(\mathfrak A\) is a topological operator so that the following properties hold:
    \begin{itemize}
        \item
        For every space \(X \in \mathfrak T\), \(\mathfrak A_X\) is an ideal of compact subsets of \(X\).
        \item
        For every space \(X \in \mathfrak T\), \(\mathbb A(X) \in \mathfrak T\) where \(\mathbb A(X)\)
        is the set \(\mathfrak A_X\) with the topology it inherits as a subspace of \(\mathbb K(X)\).
    \end{itemize}
\end{definition}
As alluded to in the preliminaries, we will be considering two contexts, which will be shown to be adequate below:
the class of non-compact spaces along with the operator that generates all compact subsets,
and the class of all infinite spaces along with the operator that generates all finite subsets.
When the space \(X\) has infinite compact subsets and \(\mathfrak A\) is the operator that generates the finite subsets,
then \(\mathbb A(X) = \mathcal P_{\mathrm{fin}}(X)\) is a proper subset of \(\mathbb K(X)\).

We now offer a modification to \cite[Def. 4.2]{CHVietoris}.
\begin{definition} \label{def:ClosedUnderAUnions}
    Suppose \((\mathfrak T, \mathfrak A)\) is an adequate context.
    We will say that the operator \(\mathfrak A\) is \emph{closed under \(\mathfrak A\)-unions}
    if the following properties hold:
    \begin{itemize}
        \item
        For each \(K_0 \in \mathfrak A_X\), \(\{ K \in \mathbb A(X) : K \subseteq K_0 \} \in \mathfrak A_{\mathbb A(X)}\).
        \item
        For each \(\mathbf K \in \mathfrak A_{\mathbb A(X)}\), \(\bigcup \mathbf K \in \mathfrak A_X\).
    \end{itemize}
\end{definition}
As pointed out by \cite[Lemma 4.3]{CHVietoris}, when \(\mathfrak T\) is all infinite (resp. non-compact) spaces,
the operator \(\mathfrak A\) which produces finite subsets (resp. compact subsets) of a given space \(X\)
is closed under \(\mathfrak A\)-unions.
In the case of the compact subsets, it follows from results of \cite{MichaelTopologies}.
\begin{definition}
    Suppose we have an adequate context \((\mathfrak T, \mathfrak A)\).
    We let \(\mathcal O(\mathfrak A)\) be the topological operator defined by
    \(X \mapsto \mathcal O_X(\mathfrak A_X)\).
    We will say that the operator \(\mathfrak A\) is \emph{suitable} if all of the following properties hold:
    \begin{enumerate}[label=(S\arabic*)]
        \item \label{ClosureUnderA}
        \(\mathfrak A\) is closed under \(\mathfrak A\)-unions.
        \item \label{CoverTransferA}
        For every \(X \in \mathfrak T\) and every \(\mathscr U \in \mathcal O_X(\mathfrak A_X)\),
        \(\{ [U] : U \in \mathscr U \} \in \mathcal O_{\mathbb A(X)}(\mathfrak A_{\mathbb A(X)}).\)
        \item \label{CoverTransferB}
        For every \(X \in \mathfrak T\) and every \(\mathscr U \in \mathcal O_{\mathbb A(X)}(\mathfrak A_{\mathbb A(X)})\),
        \(\{ V \in \mathscr T_X : (\exists U \in \mathscr U)\ [V] \subseteq U \} \in \mathcal O_X(\mathfrak A_X)\).
        \item \label{SufficientForHurewicz}
        For every \(X \in \mathfrak T\), if
        \(X \models \mathsf S_{\mathrm{fin}}(\mathcal O(\mathfrak A), \mathcal O^{\mathrm{gp}}(\mathfrak A))\),
        then \(X\) is Hurewicz.
        \item \label{GeneralMenger}
        For every \(X \in \mathfrak T\), the following are equivalent:
        \begin{itemize}
            \item
            \(\mathbb A(X)\) is Menger.
            \item
            \(\mathbb A(X) \models \mathsf S_{\mathrm{fin}}(\mathcal O(\mathfrak A), \mathcal O(\mathfrak A))\).
            \item
            \(X \models \mathsf S_{\mathrm{fin}}(\mathcal O(\mathfrak A), \mathcal O(\mathfrak A))\).
        \end{itemize}
    \end{enumerate}
\end{definition}
Note that, when \(\mathfrak T\) is the class of all non-compact spaces and \(\mathfrak A\) is the operator
that generates compact subsets, then \(\mathbb A(X) = \mathbb K(X)\) and so the set
\(\mathcal O_{\mathbb A(X)}(\mathfrak A_{\mathbb A(X)})\) is the set of \(k\)-covers of \(\mathbb K(X)\).
Analogously, when \(\mathfrak T\) is the class of all infinite spaces and \(\mathfrak A\) is the operator that
generates finite subsets, then \(\mathcal O_{\mathbb A(X)}(\mathfrak A_{\mathbb A(X)})\) is the set of \(\omega\)-covers of
\(\mathcal P_{\mathrm{fin}}(X)\).
\begin{theorem} \label{thm:GeneralTheorem}
    Suppose we have an adequate context \((\mathfrak T, \mathfrak A)\) where \(\mathfrak A\) is suitable.
    Then, for any \(X \in \mathfrak T\), the following are equivalent.
    \begin{enumerate}[label=(\roman*)]
        \item \label{GeneralHurewicz}
        \(\mathbb A(X)\) is Hurewicz.
        \item \label{GeneralHyperGroup}
        \(\mathbb A(X) \models \mathsf{S}_{\mathrm{fin}}(\mathcal O(\mathfrak A),\mathcal O^{\mathrm{gp}}(\mathfrak A))\).
        \item \label{GeneralGroundGroup}
        \(X \models \mathsf{S}_{\mathrm{fin}}(\mathcal O(\mathfrak A),\mathcal O^{\mathrm{gp}}(\mathfrak A))\).
        \item \label{GeneralGroundCovers}
        \(X \models \mathsf{S}_{\mathrm{fin}}(\mathcal O(\mathfrak A),\mathcal O(\mathfrak A))\)
        and \(\mathcal O_X^{\mathrm{gp}}(\mathfrak A_X) = \mathcal O_X(\mathfrak A_X) \cap [\mathscr T_X]^\omega\).
    \end{enumerate}
\end{theorem}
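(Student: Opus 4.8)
The plan is to establish the cycle of implications $\ref{GeneralHurewicz} \Rightarrow \ref{GeneralHyperGroup} \Rightarrow \ref{GeneralGroundGroup} \Rightarrow \ref{GeneralHurewicz}$ for the first three, and then handle $\ref{GeneralGroundGroup} \Leftrightarrow \ref{GeneralGroundCovers}$ separately. For $\ref{GeneralHurewicz} \Rightarrow \ref{GeneralHyperGroup}$: since $\mathbb{A}(X)$ is Hurewicz, it is in particular Menger, so by \ref{GeneralMenger} we know $\mathbb{A}(X) \models \mathsf{S}_{\mathrm{fin}}(\mathcal{O}(\mathfrak A), \mathcal{O}(\mathfrak A))$; but the point is to upgrade to the groupable target, and for this I would apply Lemma \ref{lem:gammaLike} in the space $\mathbb{A}(X)$ together with the fact that Hurewicz spaces satisfy the ``$\Gamma$-like'' conclusion. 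More carefully, given a sequence $\langle \mathscr{U}_n \rangle$ of $\mathcal{O}_{\mathbb{A}(X)}(\mathfrak A_{\mathbb{A}(X)})$-covers, I would use the Menger-type selection from \ref{GeneralMenger} to pass to a single $\mathfrak A$-cover $\bigcup_n \mathscr{F}_n$ of $\mathbb{A}(X)$, then combine initial segments along with the Hurewicz property of $\mathbb{A}(X)$ to witness groupability — this is exactly the mechanism of \cite[Lemma 15]{COOC7} that Lemma \ref{lem:gammaLike} generalizes, so the cleanest route is to verify the hypothesis of Lemma \ref{lem:gammaLike} holds for $\mathbb{A}(X)$ and then read off groupability of the union of the selected finite sets.

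The heart of the argument is $\ref{GeneralHyperGroup} \Rightarrow \ref{GeneralGroundGroup}$, which transfers the selection property from the hyperspace down to $X$ using suitability. Given $\langle \mathscr{U}_n : n \in \omega \rangle$ of $\mathcal{O}_X(\mathfrak A_X)$-covers, I would push each forward via \ref{CoverTransferA} to get $\{[U] : U \in \mathscr{U}_n\} \in \mathcal{O}_{\mathbb{A}(X)}(\mathfrak A_{\mathbb{A}(X)})$, apply \ref{GeneralHyperGroup} to obtain finite $\mathscr{G}_n \subseteq \{[U] : U \in \mathscr{U}_n\}$ with $\bigcup_n \mathscr{G}_n$ a groupable $\mathfrak A_{\mathbb{A}(X)}$-cover of $\mathbb{A}(X)$, and then pull back: setting $\mathscr{F}_n = \{ U \in \mathscr{U}_n : [U] \in \mathscr{G}_n \}$ gives finite subsets of $\mathscr{U}_n$. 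The task is to check $\bigcup_n \mathscr{F}_n \in \mathcal{O}_X^{\mathrm{gp}}(\mathfrak A_X)$: a grouping witness $\phi$ for $\bigcup_n \mathscr{G}_n$ transports along the bijection $[U] \leftrightarrow U$ to a candidate grouping of $\bigcup_n \mathscr{F}_n$, and for each $A \in \mathfrak A_X$ and each infinite $J$, I use that $\{ K \in \mathbb{A}(X) : K \subseteq A\}$ — or rather, since we need an element of $\mathfrak A_{\mathbb{A}(X)}$, the point $\{A\}$ itself, or more generally the hyperspace-compact set supplied by the first bullet of Definition \ref{def:ClosedUnderAUnions} — is covered cofinally by the groups, and then \ref{CoverTransferB} (or a direct unwinding: $\{A\} \in [U]$ means exactly $A \subseteq U$) converts hyperspace-covering into $X$-covering. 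I expect the main obstacle to be bookkeeping the correspondence between groupings on $\bigcup_n \mathscr{G}_n$ and on $\bigcup_n \mathscr{F}_n$ while simultaneously ensuring the ``$\mathfrak A$-cover'' clause survives — in particular making sure that every $A \in \mathfrak A_X$, not merely singletons, is caught by cofinitely many groups, which is where closure under $\mathfrak A$-unions is genuinely needed.

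For $\ref{GeneralGroundGroup} \Rightarrow \ref{GeneralHurewicz}$ I would simply invoke \ref{SufficientForHurewicz}, which is precisely this implication but for the hyperspace replaced by... wait — \ref{SufficientForHurewicz} says $X \models \mathsf{S}_{\mathrm{fin}}(\mathcal{O}(\mathfrak A), \mathcal{O}^{\mathrm{gp}}(\mathfrak A))$ implies $X$ is Hurewicz, but I want $\mathbb{A}(X)$ Hurewicz; so instead I run the already-proven implications in the other direction: $\ref{GeneralGroundGroup}$ implies $\ref{GeneralHyperGroup}$ by the same transfer machinery applied with \ref{CoverTransferB} in place of \ref{CoverTransferA} (pulling hyperspace covers down, selecting on $X$, pushing back up), and then $\ref{GeneralHyperGroup}$ gives that $\mathbb{A}(X)$ is Hurewicz by \ref{SufficientForHurewicz} applied to the space $\mathbb{A}(X)$ itself (legitimate since $\mathbb{A}(X) \in \mathfrak T$). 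So the triangle actually closes as $\ref{GeneralHurewicz} \Rightarrow \ref{GeneralHyperGroup} \Leftrightarrow \ref{GeneralGroundGroup}$ together with $\ref{GeneralHyperGroup} \Rightarrow \ref{GeneralHurewicz}$ via \ref{SufficientForHurewicz}.

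Finally, for $\ref{GeneralGroundGroup} \Leftrightarrow \ref{GeneralGroundCovers}$: the forward direction is routine — $\mathsf{S}_{\mathrm{fin}}(\mathcal{O}(\mathfrak A), \mathcal{O}^{\mathrm{gp}}(\mathfrak A))$ trivially implies $\mathsf{S}_{\mathrm{fin}}(\mathcal{O}(\mathfrak A), \mathcal{O}(\mathfrak A))$ since $\mathcal{O}^{\mathrm{gp}}(\mathfrak A) \subseteq \mathcal{O}(\mathfrak A)$, and it also forces every $\mathfrak A_X$-cover to have a countable $\mathfrak A_X$-subcover (apply the principle to the constant sequence), and every countable $\mathfrak A_X$-cover to be groupable (a diagonal argument partitioning the given cover into the sequence and applying Hurewicz-type selection — this is the $\epsilon$-space-style remark preceding Theorem \ref{thm:OmegaGroupChar}), giving $\mathcal{O}_X^{\mathrm{gp}}(\mathfrak A_X) = \mathcal{O}_X(\mathfrak A_X) \cap [\mathscr{T}_X]^\omega$. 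For the converse, assume both clauses of \ref{GeneralGroundCovers}; given $\langle \mathscr{U}_n \rangle$ of $\mathcal{O}_X(\mathfrak A_X)$, apply $\mathsf{S}_{\mathrm{fin}}(\mathcal{O}(\mathfrak A), \mathcal{O}(\mathfrak A))$ to get finite $\mathscr{F}_n \subseteq \mathscr{U}_n$ with $\bigcup_n \mathscr{F}_n \in \mathcal{O}_X(\mathfrak A_X)$, which is automatically countable, hence by the second clause automatically groupable, so $\bigcup_n \mathscr{F}_n \in \mathcal{O}_X^{\mathrm{gp}}(\mathfrak A_X)$ as required. The subtlety here is confirming the claim that every countable member of $\mathcal{O}_X(\mathfrak A_X)$ is groupable under these hypotheses — I would derive this by noting \ref{GeneralGroundCovers} is self-referential in a benign way and appeal to the standard equivalence (as in \cite{COOC7}) that $\mathsf{S}_{\mathrm{fin}}$ into a groupable target over $\mathfrak A$-covers is what pins down groupability of all countable covers.
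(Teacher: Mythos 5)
Your overall architecture is workable in outline --- the cover-transfer arguments you sketch for \ref{GeneralHyperGroup}\(\iff\)\ref{GeneralGroundGroup} via \ref{CoverTransferA}, \ref{CoverTransferB} and \ref{ClosureUnderA} are sound (and are a legitimately different decomposition from the paper, which instead routes \ref{GeneralHyperGroup}\(\implies\)\ref{GeneralHurewicz}\(\implies\)\ref{GeneralGroundCovers}\(\implies\)\ref{GeneralGroundGroup} and proves \ref{GeneralGroundGroup}\(\implies\)\ref{GeneralHurewicz} directly by building a Hurewicz witness from Lemma \ref{lem:gammaLike}), and your use of \ref{SufficientForHurewicz} applied to \(\mathbb A(X)\in\mathfrak T\) for \ref{GeneralHyperGroup}\(\implies\)\ref{GeneralHurewicz} is exactly what the paper does. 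But there is a genuine gap at the one implication that carries the real content of the theorem: getting \emph{from} the Hurewicz property of \(\mathbb A(X)\) \emph{to} any of the groupability statements. Your proposed route for \ref{GeneralHurewicz}\(\implies\)\ref{GeneralHyperGroup} is to ``verify the hypothesis of Lemma \ref{lem:gammaLike} holds for \(\mathbb A(X)\) and then read off groupability'' --- but the hypothesis of Lemma \ref{lem:gammaLike} for \(\mathbb A(X)\) is precisely \(\mathbb A(X)\models\mathsf S_{\mathrm{fin}}(\mathcal O(\mathfrak A),\mathcal O^{\mathrm{gp}}(\mathfrak A))\), i.e.\ statement \ref{GeneralHyperGroup} itself. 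As written, this step is circular, and the phrase ``combine initial segments along with the Hurewicz property'' is not an argument: applying the Hurewicz selection principle to a fixed sequence of covers gives you no control over \emph{where} in the enumeration the finite selections land, which is exactly what a finite-to-one grouping function requires. Without this implication you have established \ref{GeneralHyperGroup}\(\iff\)\ref{GeneralGroundGroup}\(\iff\)\ref{GeneralGroundCovers} and \ref{GeneralHyperGroup}\(\implies\)\ref{GeneralHurewicz}, but the cycle never closes.

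The missing ingredient is the game-theoretic characterization of the Hurewicz property (Theorem \ref{thm:HurewiczGameChar}): the paper shows that a countable \(\mathscr U=\{U_n:n\in\omega\}\in\mathcal O_X(\mathfrak A_X)\) (resp.\ \(\in\mathcal O_{\mathbb A(X)}(\mathfrak A_{\mathbb A(X)})\)) is groupable by having One play, in the Hurewicz game on \(\mathbb A(X)\), successive tails \(\{[U_n]:n>M_k\}\) (resp.\ refinements of tails by basic sets \([V]\), using \ref{CoverTransferB} and \ref{ClosureUnderA}) where \(M_k\) is read off adaptively from Two's previous move; since \(\sigma\) cannot be winning, Two's defeating play produces the increasing sequence \(\langle M_k\rangle\) whose consecutive intervals \((M_{k-1},M_k]\) are the groups. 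The adaptivity is the point: One's next cover depends on how far forward Two's last finite selection reached, which is what forces the selections into disjoint consecutive blocks. (A game-free variant is possible --- feed the Hurewicz property the sequence of tails \(\mathscr W_k=\{[U_n]:n\geq k\}\), note that the selection from \(\mathscr W_k\) only uses indices \(\geq k\), and choose block boundaries \(m_{j+1}>\max\{N_k:k\leq m_j\}\) where \(N_k\) is the largest index appearing in the \(k\)-th selection --- but some such argument must be supplied; it is not a consequence of Lemma \ref{lem:gammaLike}.) A smaller issue of the same kind occurs in your last paragraph: deducing \(\mathcal O_X^{\mathrm{gp}}(\mathfrak A_X)=\mathcal O_X(\mathfrak A_X)\cap[\mathscr T_X]^\omega\) from \ref{GeneralGroundGroup} is fine via the constant sequence \(\mathscr U_n=\mathscr U\) (a groupable subset of a countable cover extends to a grouping of the whole cover by distributing the leftover elements finitely-to-one), but ``appeal to the standard equivalence'' is not a proof and should be replaced by that argument.
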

\begin{proof}
    Note that the implication \ref{GeneralHyperGroup}\(\implies\)\ref{GeneralHurewicz} follows
    from \ref{SufficientForHurewicz}.

    \ref{GeneralHurewicz}\(\implies\)\ref{GeneralGroundCovers}:
    Suppose \(\mathbb A(X)\) is Hurewicz
    and notice that \(\mathbb A(X)\) is Menger,
    which implies that \(X \models \mathsf{S}_{\mathrm{fin}}(\mathcal O(\mathfrak A),\mathcal O(\mathfrak A))\)
    by \ref{GeneralMenger}.
    So we need only show that every countable member of \(\mathcal O_X(\mathfrak A_X)\)
    is groupable.
    So let \(\mathscr U = \{ U_n : n \in \omega \} \in \mathcal O_X(\mathfrak A_X)\).
    We define a strategy for One in the Hurewicz game relative to \(\mathbb A(X)\) in the following way.
    Let
    \[\sigma(\langle \rangle) = \{ [U_n] : n \in \omega \}.\]
    For \(k \in\omega\), suppose we have \(\langle \mathscr G_j : j < k \rangle\), \(\langle M_j : j < k \rangle\),
    and \(\sigma(\langle \mathscr G_j : j < k \rangle)\) defined.
    For any \[\mathscr G_k \in \left[ \sigma(\langle \mathscr G_j : j < k \rangle) \right]^{<\omega},\]
    let \[M_k = \max\{ m \in \omega : [U_m] \in \mathscr G_k \}.\]
    Then set
    \[\sigma(\langle \mathscr G_j : j \leq k \rangle) =
    \left\{ [U_n] : n > M_k \right\}.\]
    Notice that, by \ref{CoverTransferA},
    \(\sigma(\langle \mathscr G_j : j \leq k \rangle) \in \mathcal O_{\mathbb A(X)}(\mathfrak A_{\mathbb A(X)})\) since
    \(\{ U_n : n > M_k \} \in \mathcal O_X(\mathfrak A_X)\).
    So this \(\sigma\) is a strategy for One in the Hurewicz game on \(\mathbb A(X)\).

    By Theorem \ref{thm:HurewiczGameChar}, \(\sigma\) cannot be a winning strategy.
    So there is a sequence \(\langle \mathscr G_j : j \in \omega \rangle\) of finite selections and
    a corresponding increasing sequence \(\langle M_j : j \in \omega \rangle\) of naturals as defined
    above that win against \(\sigma\).
    We can then define \(\phi : \mathscr U \to \omega\) by the rule
    \[\phi(U_n) = \min \{ k \in \omega : n \leq M_k \}.\]
    Since \(\phi\) is evidently finite-to-one, we need only show it has the required
    covering property.
    So let \(J \subseteq \omega\) be infinite and \(K \in \mathfrak A_X\).
    By our construction, there must be some \(\ell \in J\), \(\ell > 0\), and some
    \(W \in \mathscr G_\ell\) for which \(K \in W\).
    Hence, there must be some \(n \in (M_{\ell-1}, M_\ell]\) so that \(W = [U_n]\).
    Note that \(\phi(U_n) = \ell\) and that \(K \subseteq U_n\).
    That is, \(K \subseteq U_n \in \phi^{-1}(\ell)\), which establishes that \(\{ U_n : n \in \omega \}\)
    is groupable.

    The implication \ref{GeneralGroundCovers}\(\implies\)\ref{GeneralGroundGroup} is evident.

    \ref{GeneralGroundGroup}\(\implies\)\ref{GeneralHurewicz}:
    Suppose \(X \models \mathsf{S}_{\mathrm{fin}}(\mathcal O(\mathfrak A),\mathcal O^{\mathrm{gp}}(\mathfrak A))\)
    and let \(\langle \mathscr U_n : n \in \omega \rangle\)
    be a sequence of open covers of \(\mathbb A(X)\).
    Note that \[\mathscr U_n^\ast := \left\{ \bigcup \mathscr F : \mathscr F \in [\mathscr U_n]^{<\omega} \right\}
    \in \mathcal O_{\mathbb A(X)}(\mathfrak A_{\mathbb A(X)})\]
    since \(\mathfrak A_{\mathbb A(X)}\) consists of compact sets.
    Fix a choice function \(\gamma_{u,n} : \mathscr U_n^\ast \to [\mathscr U_n]^{<\omega}\) to be so that
    \(\bigcup \gamma_{u,n}(U) = U\).
    By \ref{CoverTransferB},
    \[\mathscr V_n := \{ V \in \mathscr T_X : (\exists U \in \mathscr U_n^\ast)\ [V] \subseteq U \} \in \mathcal O_X(\mathfrak A_X).\]
    Fix a choice function \(\gamma_{v,n} : \mathscr V_n \to \mathscr U^\ast_n\) so that
    \([V] \subseteq \gamma_{v,n}(V)\).
    By Lemma \ref{lem:gammaLike}, we can choose \(\mathscr G_n \in [\mathscr V_n]^{<\omega}\) with the properties
    guaranteed by the lemma.
    Define \[\mathscr F_n = \bigcup \{ \gamma_{u,n} \circ \gamma_{v,n}(V) : V \in \mathscr G_n \}\]
    and note that \(\mathscr F_n \in [\mathscr U_n]^{<\omega}\).

    We now show that the selection \(\{\mathscr F_n: n \in \omega\}\) witnesses the Hurewicz property for
    \(\mathbb A(X)\).
    So let \(K \in \mathbb A(X)\).
    By our use of Lemma \ref{lem:gammaLike}, we can find \(m \in \omega\) so that, for every \(n \geq m\),
    there is some \(V \in \mathscr G_n\) with
    \[K \subseteq V \implies K \in [V] \subseteq \gamma_{v,n}(V) = \bigcup \gamma_{u,n} \circ \gamma_{v,n}(V) \subseteq \bigcup \mathscr F_n.\]
    Hence, \(\mathbb A(X)\) is Hurewicz.

    \ref{GeneralHurewicz}\(\implies\)\ref{GeneralHyperGroup}:
    Suppose \(\mathbb A(X)\) is Hurewicz.
    By the equivalence of \ref{GeneralHurewicz} and \ref{GeneralGroundGroup} established above,
    we know that \(X \models \mathsf{S}_{\mathrm{fin}}(\mathcal O(\mathfrak A),\mathcal O^{\mathrm{gp}}(\mathfrak A))\).
    It follows that
    \(\mathbb A(X) \models \mathsf{S}_{\mathrm{fin}}(\mathcal O(\mathfrak A),\mathcal O(\mathfrak A))\)
    by \ref{GeneralMenger}.
    So we need only show every countable member of \(\mathcal O_{\mathbb A(X)}(\mathfrak A_{\mathbb A(X)})\) is groupable.
    So let \(\mathscr U = \{ U_n : n \in \omega \} \in \mathcal O_{\mathbb A(X)}(\mathfrak A_{\mathbb A(X)})\).
    We define a strategy for One in the Hurewicz game on \(\mathbb A(X)\).

    Throughout this portion of the proof, we will be using the fact that
    \(X \models \mathsf{S}_{\mathrm{fin}}(\mathcal O(\mathfrak A),\mathcal O^{\mathrm{gp}}(\mathfrak A))\)
    to find countable subsets of members of \(\mathcal O_X(\mathfrak A_X)\) which are in \(\mathcal O_X(\mathfrak A_X)\).
    We will also use \ref{CoverTransferB} to take members of \(\mathcal O_{\mathbb A(X)}(\mathfrak A_{\mathbb A(X)})\)
    and refine them with sets of the form \([V]\) for \(V \in\mathscr T_X\) that form elements
    of \(\mathcal O_X(\mathfrak A_X)\).

    To begin, let \(\{ V_m : m \in \omega \} \in \mathcal O_X(\mathfrak A_X)\) be so that, for every \(m \in \omega\),
    there is \(n \in \omega\) with \([V_m] \subseteq U_n\).
    Then we let \[\sigma(\langle \rangle) = \mathscr V_{\langle\rangle} := \{ [V_m] : m \in \omega \}.\]
    For \(\mathscr G_0 \in [\mathscr V_{\langle \rangle}]^{<\omega}\), define
    \[M_0 = \min\{ N \in \omega : (\forall W \in \mathscr G_0)(\exists n \leq N)\ W \subseteq U_n \}.\]
    Note that \(\{ U_n : n > M_0 \} \in \mathcal O_{\mathbb A(X)}(\mathfrak A_{\mathbb A(X)})\).
    So we can let \(\{ V_{0 , m} : m \in \omega \} \in \mathcal O_X(\mathfrak A_X)\)
    be so that, for every \(m \in \omega\), there is some \(n > M_0\) with \([V_{0,m}] \subseteq U_n\).
    Define
    \[\sigma(\langle \mathscr G_0 \rangle) = \mathscr V_{\langle M_0 \rangle}
    := \{ [V_{0,m}] : m \in \omega \}.\]

    Now, let \(k \in \omega\) and suppose we have \(\mathscr G_j\), \(M_j\), for \(j \leq k\), and
    \(\mathscr V_{\langle M_j : j \leq k\rangle}\) defined where \(\mathscr V_{\langle M_j : j \leq k\rangle}\)
    refines \(\mathscr U\).
    For \(\mathscr G_{k+1} \in [\mathscr V_{\langle M_j : j \leq k \rangle}]^{<\omega}\), let
    \[M_{k+1} = \min\{ N > M_k : (\forall W \in \mathscr G_{k+1})(\exists n \leq N)\ W \subseteq U_n \}.\]
    Let \(\{ V_{k+1,m} : m \in \omega \} \in \mathcal O_X(\mathfrak A_X)\) be so that,
    for every \(m \in \omega\), there is some \(n > M_{k+1}\)
    so that \([V_{k+1,m}] \subseteq U_n.\)
    Define
    \[\sigma(\langle\mathscr G_j : j \leq k+1 \rangle) = \mathscr V_{\langle M_j : j \leq k+1 \rangle}
    := \{ [V_{k+1,m}] : m \in \omega \}.\]
    This defines a strategy \(\sigma\) for One in the Hurewicz game on \(\mathbb A(X)\).

    By Theorem \ref{thm:HurewiczGameChar},
    we know that \(\sigma\) cannot be winning.
    So there is a sequence \(\langle \mathscr G_j : j \in \omega \rangle\) and a corresponding increasing
    sequence \(\langle M_j : j \in \omega \rangle\) of naturals so that, for any \(K \in \mathbb A(X)\),
    there exists some \(m \in \omega\) so that, for every \(n \geq m\), \(K \in \bigcup \mathscr G_n\).
    We can define \(\phi : \mathscr U \to \omega\) by the rule
    \[\phi(U_n) = \min\{ k \in \omega : n \leq M_k \}.\]
    Note that \(\phi\) is clearly finite-to-one.
    So we need only show it satisfies the covering criterion for groupability of
    \(\mathcal O_{\mathbb A(X)}(\mathfrak A_{\mathbb A(X)})\).
    Let \(J \subseteq \omega\) be infinite and \(\mathbf K \in \mathfrak A_{\mathbb A(X)}\).
    Observe that \(\bigcup \mathbf K \in \mathfrak A_X\) by \ref{ClosureUnderA}.
    There must be some \(\ell \in J\), \(\ell > 0\), and some \(W \in \mathscr T_X\) with
    \(\bigcup \mathbf K \in [W] \in \mathscr G_\ell\).
    Note that \(\bigcup \mathbf K \in [W]\) means that \(\bigcup \mathbf K \subseteq W\).
    By construction, there is some \(n \in (M_{\ell-1},M_\ell]\) so that \([W] \subseteq U_n\).
    Hence, for any \(K \in \mathbf K\), \(K \subseteq \bigcup \mathbf K \subseteq W.\)
    That is, \(K \in [W] \subseteq U_n\).
    Since \(K \in \mathbf K\) was arbitrary, \(\mathbf K \subseteq U_n \in \phi^{-1}(\ell)\).
    This establishes that \(\mathscr U\) is groupable, and thus, that
    \(\mathbb A(X) \models \mathsf S_{\mathrm{fin}}(\mathcal O(\mathfrak A),\mathcal O^{\mathrm{gp}}(\mathfrak A))\).
\end{proof}
We now offer a game-theoretic characterization.
\begin{theorem} \label{thm:GeneralStrategic}
    Let \(\square \in \{1 , \mathrm{fin} \}\).
    Suppose we have an adequate context \((\mathfrak T, \mathfrak A)\) where \(\mathfrak A\) is suitable and,
    for every \(X \in \mathfrak T\),
    \[\mathrm{I} \underset{\mathrm{pre}}{\uparrow} \mathsf G_{\square}(\mathcal O_X(\mathfrak A_X),\mathcal O_X(\mathfrak A_X))
    \iff \mathrm{I} \uparrow \mathsf G_{\square}(\mathcal O_X(\mathfrak A_X),\mathcal O_X(\mathfrak A_X)).\]
    Then, for any \(X \in \mathfrak T\),
    \[\mathrm{I} \underset{\mathrm{pre}}{\uparrow} \mathsf G_{\square}(\mathcal O_X(\mathfrak A_X),\mathcal O_X^{\mathrm{gp}}(\mathfrak A_X))
    \iff \mathrm{I} \uparrow \mathsf G_{\square}(\mathcal O_X(\mathfrak A_X),\mathcal O_X^{\mathrm{gp}}(\mathfrak A_X)).\]
\end{theorem}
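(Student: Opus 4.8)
The plan: The implication $\mathrm{I}\underset{\mathrm{pre}}{\uparrow}\implies\mathrm{I}\uparrow$ holds for any game, a predetermined strategy being in particular a strategy, so the content lies in the converse, which I prove in contrapositive form. Write $\mathcal A:=\mathcal O_X(\mathfrak A_X)$. By Remark~\ref{rmk:SelectionPrinciple} the hypothesis $\mathrm{I}\underset{\mathrm{pre}}{\not\uparrow}\mathsf G_\square(\mathcal A,\mathcal A^{\mathrm{gp}})$ is exactly $X\models\mathsf S_\square(\mathcal A,\mathcal A^{\mathrm{gp}})$, and the target $\mathrm{I}\not\uparrow\mathsf G_\square(\mathcal A,\mathcal A^{\mathrm{gp}})$ says that every strategy for One in that game is defeated by some play of Two. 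I will use two consequences of $X\models\mathsf S_\square(\mathcal A,\mathcal A^{\mathrm{gp}})$: applying the principle to a constant sequence, every member of $\mathcal A$ has a subset lying in $\mathcal A^{\mathrm{gp}}$, hence a countable $\mathfrak A_X$-subcover (groupable covers being countable); and, since $\mathcal A^{\mathrm{gp}}\subseteq\mathcal A$, also $X\models\mathsf S_\square(\mathcal A,\mathcal A)$, so $\mathrm{I}\underset{\mathrm{pre}}{\not\uparrow}\mathsf G_\square(\mathcal A,\mathcal A)$ by Remark~\ref{rmk:SelectionPrinciple}, whence $\mathrm{I}\not\uparrow\mathsf G_\square(\mathcal A,\mathcal A)$ by the theorem's standing hypothesis. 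This is the only place that hypothesis enters.

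Now fix a strategy $\sigma$ for One in $\mathsf G_\square(\mathcal A,\mathcal A^{\mathrm{gp}})$; I must defeat it. Thinning every move of $\sigma$ to a countable $\mathfrak A_X$-subcover yields a strategy whose defeat entails the defeat of $\sigma$ --- Two's selections remain legal against the larger covers and the payoff depends only on those selections --- so I may assume each move of $\sigma$ is a fixed enumerated countable member of $\mathcal A$ and restrict Two's responses to initial segments of those enumerations (when $\square=\mathrm{fin}$), respectively to individual enumerated sets (when $\square=1$). The legal partial plays then form a countable subtree $T\subseteq\omega^{<\omega}$, each node $s$ of which carries the $\mathfrak A_X$-cover $\mathscr W_s\in\mathcal A$ that $\sigma$ dictates there. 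It now suffices to produce a branch of $T$ along which Two's selections form a member of $\mathcal A^{\mathrm{gp}}$.

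I intend to construct such a branch by a tree-of-plays argument in the style of the proof of Theorem~\ref{thm:HurewiczGameChar}, with $\mathrm{I}\not\uparrow\mathsf G_\square(\mathcal A,\mathcal A)$ taking over the role played there by the Lindel{\"{o}}f/Menger property and Lemma~\ref{lem:gammaLike} taking over the role of the Hurewicz property. Reading the levels of $T$ as $\mathfrak A_X$-covers of $X$, One's act of relaying, at inning $n$, the cover $\mathscr W_s$ attached to the node $s\in T$ reached by Two's prior moves is a strategy for One in $\mathsf G_\square(\mathcal A,\mathcal A)$; since it cannot be winning, Two may descend a single branch of $T$ collecting selections whose union is already an $\mathfrak A_X$-cover. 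Lemma~\ref{lem:gammaLike} --- available since $X\models\mathsf S_\square(\mathcal A,\mathcal A^{\mathrm{gp}})$ gives $X\models\mathsf S_{\mathrm{fin}}(\mathcal A,\mathcal A^{\mathrm{gp}})$ for $\square\in\{1,\mathrm{fin}\}$ --- together with Lemma~\ref{lem:CDR} for disjointness then lets Two refine this descent so that its successive selections are pairwise disjoint and every $A\in\mathfrak A_X$ lies in a selected set at all but finitely many innings; grouping by innings exhibits the union of Two's selections along the branch as a member of $\mathcal A^{\mathrm{gp}}$, so this branch is a play defeating $\sigma$, which completes the contrapositive.

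I expect the hard part to be precisely the coordination in the previous paragraph: running the descent down $T$ furnished by $\mathrm{I}\not\uparrow\mathsf G_\square(\mathcal A,\mathcal A)$ simultaneously with the grouping furnished by Lemma~\ref{lem:gammaLike}, all along a single branch. The reason the bare selection principle on $X$ does not suffice --- and why the game-theoretic hypothesis on the plain game is imposed --- is that $\mathfrak A_X$ can be uncountable, so Two cannot simply enumerate its covering obligations and meet the $j$-th by inning $j$: Lemma~\ref{lem:gammaLike} compresses those obligations into an inning-indexed family, while $\mathrm{I}\not\uparrow\mathsf G_\square(\mathcal A,\mathcal A)$ is what allows the compression to be realized adaptively along $T$ rather than along a branch fixed in advance. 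The two cases $\square=1$ and $\square=\mathrm{fin}$ should differ only in routine bookkeeping, single selections being a degenerate instance of finite ones and Lemma~\ref{lem:gammaLike} applying in both.
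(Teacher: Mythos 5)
There is a genuine gap, and it sits exactly where you flag ``the hard part.'' Your opening moves are right: the trivial direction, the reduction via Remark~\ref{rmk:SelectionPrinciple} to $X\models\mathsf S_\square(\mathcal A,\mathcal A^{\mathrm{gp}})$, the deduction of $X\models\mathsf S_\square(\mathcal A,\mathcal A)$ and hence $\mathrm{I}\not\uparrow\mathsf G_\square(\mathcal A,\mathcal A)$ from the standing hypothesis. But you then launch a tree-of-plays construction in which Two is supposed to descend a branch of the strategy tree using $\mathrm{I}\not\uparrow\mathsf G_\square(\mathcal A,\mathcal A)$ and simultaneously ``refine this descent'' via Lemmas~\ref{lem:gammaLike} and~\ref{lem:CDR} to force groupability. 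That coordination is not just hard, it is broken as described: Lemma~\ref{lem:gammaLike} operates on a sequence of covers fixed in advance, whereas here One's $n$-th cover depends via $\sigma$ on Two's first $n$ moves. Any post hoc refinement of Two's selections changes the node of $T$ reached, hence changes the covers One subsequently plays, and the branch you refined no longer exists. You acknowledge you have not resolved this, so the proposal is a plan with its central step missing.

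The missing idea is that no coordination is needed at all. You already have $X\models\mathsf S_{\mathrm{fin}}(\mathcal A,\mathcal A^{\mathrm{gp}})$ (for $\square=1$ this follows since single selections are finite ones), so Theorem~\ref{thm:GeneralTheorem}, equivalence of \ref{GeneralGroundGroup} and \ref{GeneralGroundCovers}, gives $\mathcal O_X^{\mathrm{gp}}(\mathfrak A_X)=\mathcal O_X(\mathfrak A_X)\cap[\mathscr T_X]^\omega$: \emph{every} countable member of $\mathcal O_X(\mathfrak A_X)$ is groupable. Now take any strategy $\sigma$ for One in $\mathsf G_\square(\mathcal A,\mathcal A^{\mathrm{gp}})$; it is in particular a strategy in $\mathsf G_\square(\mathcal A,\mathcal A)$, so Two has a play against it whose accumulated selection lies in $\mathcal A$. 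That selection is countable, hence groupable, hence lies in $\mathcal A^{\mathrm{gp}}$, and the very same play defeats $\sigma$ in the groupable game. This is the paper's proof; it is three lines once Theorem~\ref{thm:GeneralTheorem} is invoked, and it explains why the game-theoretic hypothesis is imposed only on the plain game. Your diagnosis that the difficulty stems from $\mathfrak A_X$ being uncountable points at the wrong obstruction: the uncountability is absorbed entirely by the equivalence already proved in Theorem~\ref{thm:GeneralTheorem}, not by anything that must happen adaptively along the game tree.
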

\begin{proof}
    We need only show that, for \(X \in \mathfrak T\),
    \[\mathrm{I} \underset{\mathrm{pre}}{\not\uparrow} \mathsf G_{\square}(\mathcal O_X(\mathfrak A_X),\mathcal O_X^{\mathrm{gp}}(\mathfrak A_X))
    \implies \mathrm{I} \not\uparrow \mathsf G_{\square}(\mathcal O_X(\mathfrak A_X),\mathcal O_X^{\mathrm{gp}}(\mathfrak A_X)).\]
    So suppose \(\mathrm{I} \underset{\mathrm{pre}}{\not\uparrow} \mathsf G_{\square}(\mathcal O_X(\mathfrak A_X),\mathcal O_X^{\mathrm{gp}}(\mathfrak A_X))\).
    By Remark \ref{rmk:SelectionPrinciple} and Theorem \ref{thm:GeneralTheorem},
    \(X \models \mathsf S_\square(\mathcal O(\mathfrak A),\mathcal O(\mathfrak A))\) and
    \(\mathcal O_X^{\mathrm{gp}}(\mathfrak A_X) = \mathcal O_X(\mathfrak A_X) \cap [\mathscr T_X]^\omega\).
    By the hypothesis, we see that
    \(\mathrm{I} \not\uparrow \mathsf G_\square(\mathcal O_X(\mathfrak A_X), \mathcal O_X(\mathfrak A_X))\).
    Now, since any strategy \(\sigma\) employed by One in the game cannot be winning, Two can always beat
    \(\sigma\) and, since every countable member of \(\mathcal O_X(\mathfrak A_X)\) is groupable,
    Two's winning play is always groupable.
    Hence, One does not have a winning strategy in
    \(\mathsf G_\square(\mathcal O_X(\mathfrak A_X), \mathcal O^{\mathrm{gp}}_X(\mathfrak A_X))\).
\end{proof}
To address single-selections, we need a bit more in our hypotheses.
\begin{definition}
    Suppose we have an adequate context \((\mathfrak T, \mathfrak A)\).
    We will say that the operator \(\mathfrak A\) is \emph{super suitable} if it is suitable and,
    for all \(X \in \mathfrak T\), the following are equivalent:
    \begin{itemize}
        \item
        \(\mathbb A(X)\) is Rothberger.
        \item
        \(\mathbb A(X) \models \mathsf{S}_{1}(\mathcal O(\mathfrak A),\mathcal O(\mathfrak A))\).
        \item
        \(X \models \mathsf{S}_{1}(\mathcal O(\mathfrak A),\mathcal O(\mathfrak A))\).
    \end{itemize}
\end{definition}
\begin{theorem} \label{thm:GeneralSingleSelections}
    Suppose we have an adequate context \((\mathfrak T, \mathfrak A)\) where \(\mathfrak A\) is super suitable.
    Then, for any \(X \in \mathfrak T\), the following are equivalent.
    \begin{enumerate}[label=(\roman*)]
        \item \label{GeneralGerlitsNagy}
        \(\mathbb A(X)\) is Gerlits-Nagy.
        \item \label{GeneralSingleHyper}
        \(\mathbb A(X) \models \mathsf{S}_{1}(\mathcal O(\mathfrak A),\mathcal O^{\mathrm{gp}}(\mathfrak A))\).
        \item \label{GeneralSingleGround}
        \(X \models \mathsf{S}_{1}(\mathcal O(\mathfrak A),\mathcal O^{\mathrm{gp}}(\mathfrak A))\).
    \end{enumerate}
\end{theorem}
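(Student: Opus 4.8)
The plan is to establish the cycle \ref{GeneralSingleHyper}$\Rightarrow$\ref{GeneralGerlitsNagy}$\Rightarrow$\ref{GeneralSingleGround}$\Rightarrow$\ref{GeneralSingleHyper}, leaning on Theorem \ref{thm:GeneralTheorem}, the super suitability hypothesis, and two soft observations: a single selection is a finite selection, so $\mathsf S_1(\mathcal A,\mathcal B)$ implies $\mathsf S_{\mathrm{fin}}(\mathcal A,\mathcal B)$; and $\mathcal O_Z^{\mathrm{gp}}(\mathfrak A_Z) \subseteq \mathcal O_Z(\mathfrak A_Z)$ for every space $Z$ in the context. The first implication is then immediate: if $\mathbb A(X) \models \mathsf S_1(\mathcal O(\mathfrak A),\mathcal O^{\mathrm{gp}}(\mathfrak A))$, then $\mathbb A(X) \models \mathsf S_{\mathrm{fin}}(\mathcal O(\mathfrak A),\mathcal O^{\mathrm{gp}}(\mathfrak A))$, so $\mathbb A(X)$ is Hurewicz by Theorem \ref{thm:GeneralTheorem}, and also $\mathbb A(X) \models \mathsf S_1(\mathcal O(\mathfrak A),\mathcal O(\mathfrak A))$, so $\mathbb A(X)$ is Rothberger by super suitability; hence $\mathbb A(X)$ is Gerlits-Nagy.

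For \ref{GeneralGerlitsNagy}$\Rightarrow$\ref{GeneralSingleGround} I would use the two halves of the Gerlits-Nagy property separately. Since $\mathbb A(X)$ is Hurewicz, Theorem \ref{thm:GeneralTheorem} (item \ref{GeneralGroundCovers}) supplies $X \models \mathsf S_{\mathrm{fin}}(\mathcal O(\mathfrak A),\mathcal O(\mathfrak A))$ together with the crucial equality $\mathcal O_X^{\mathrm{gp}}(\mathfrak A_X) = \mathcal O_X(\mathfrak A_X) \cap [\mathscr T_X]^\omega$; in other words, on $X$ every countably infinite $\mathcal A$-cover is groupable. Since $\mathbb A(X)$ is Rothberger, super suitability gives $X \models \mathsf S_1(\mathcal O(\mathfrak A),\mathcal O(\mathfrak A))$. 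These combine at once: given a sequence $\langle\mathscr U_n : n\in\omega\rangle$ of elements of $\mathcal O_X(\mathfrak A_X)$, the single-selection principle yields $A_n \in \mathscr U_n$ with $\{A_n : n\in\omega\} \in \mathcal O_X(\mathfrak A_X)$; since members of $\mathcal O_X(\mathfrak A_X)$ are infinite and this selection is countable, it is a countably infinite member of $\mathcal O_X(\mathfrak A_X) \cap [\mathscr T_X]^\omega = \mathcal O_X^{\mathrm{gp}}(\mathfrak A_X)$. Therefore $X \models \mathsf S_1(\mathcal O(\mathfrak A),\mathcal O^{\mathrm{gp}}(\mathfrak A))$.

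The implication \ref{GeneralSingleGround}$\Rightarrow$\ref{GeneralSingleHyper} is the substantive one; it replays the previous argument one level up. From \ref{GeneralSingleGround} I obtain $X \models \mathsf S_1(\mathcal O(\mathfrak A),\mathcal O(\mathfrak A))$, hence $\mathbb A(X) \models \mathsf S_1(\mathcal O(\mathfrak A),\mathcal O(\mathfrak A))$ by super suitability, and also $X \models \mathsf S_{\mathrm{fin}}(\mathcal O(\mathfrak A),\mathcal O^{\mathrm{gp}}(\mathfrak A))$, hence $\mathbb A(X)$ is Hurewicz by Theorem \ref{thm:GeneralTheorem}. To close the argument by the same ``an infinite $\mathcal A$-cover arising from a single selection is automatically groupable'' step, I need the hyperspace analogue of the cover equality: when $\mathbb A(X)$ is Hurewicz, every countably infinite member of $\mathcal O_{\mathbb A(X)}(\mathfrak A_{\mathbb A(X)})$ is groupable. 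This is exactly what is proved inside Theorem \ref{thm:GeneralTheorem}, in the portion establishing \ref{GeneralHurewicz}$\Rightarrow$\ref{GeneralHyperGroup}: from an arbitrary countable $\mathcal A$-cover of $\mathbb A(X)$ one builds a strategy for One in the Hurewicz game on $\mathbb A(X)$, defeats it via Theorem \ref{thm:HurewiczGameChar}, and reads a groupability witness off the winning play; that sub-argument uses nothing beyond the Hurewicz property of $\mathbb A(X)$ and the suitability of $\mathfrak A$, so I would cite it directly. Granting it, a single selection $A_n \in \mathscr U_n$ produced by $\mathbb A(X) \models \mathsf S_1(\mathcal O(\mathfrak A),\mathcal O(\mathfrak A))$ forms a countably infinite element of $\mathcal O_{\mathbb A(X)}(\mathfrak A_{\mathbb A(X)})$, which is therefore groupable; this gives \ref{GeneralSingleHyper}. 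The main obstacle is thus not a new construction but confirming the legitimacy of this last fact about covers of $\mathbb A(X)$; if a self-contained treatment is preferred, the Hurewicz-game argument of Theorem \ref{thm:GeneralTheorem} can be reproduced verbatim for $\mathbb A(X)$.
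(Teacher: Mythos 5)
Your proposal is correct and follows essentially the same route as the paper: the same cycle of implications, driven by Theorem \ref{thm:GeneralTheorem}, the super-suitability criterion, and the observation that a single selection landing in an (automatically infinite) countable $\mathcal A$-cover is groupable once the relevant cover equality holds. The only cosmetic difference is in \ref{GeneralSingleGround}$\Rightarrow$\ref{GeneralSingleHyper}, where the paper obtains $\mathcal O_{\mathbb A(X)}^{\mathrm{gp}}(\mathfrak A_{\mathbb A(X)}) = \mathcal O_{\mathbb A(X)}(\mathfrak A_{\mathbb A(X)}) \cap [\mathscr T_{\mathbb A(X)}]^\omega$ by applying Theorem \ref{thm:GeneralTheorem} again with $\mathbb A(X)$ as the ground space (legitimate since $\mathbb A(X)\in\mathfrak T$ by adequacy), whereas you extract the same fact from the internal Hurewicz-game sub-argument of that theorem's proof; both are valid.
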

\begin{proof}
    \ref{GeneralGerlitsNagy}\(\implies\)\ref{GeneralSingleGround}:
    Since \(\mathbb A(X)\) is Hurewicz, Theorem \ref{thm:GeneralTheorem} asserts that
    \(\mathcal O_X^{\mathrm{gp}}(\mathfrak A_X) = \mathcal O_X(\mathfrak A_X) \cap [\mathscr T_X]^\omega\).
    Since \(\mathbb A(X)\) is Rothberger, by the criterion in the definition of super suitable,
    \(X \models \mathsf S_1(\mathcal O(\mathfrak A), \mathcal O(\mathfrak A))\).
    Hence,
    \(X \models \mathsf S_1(\mathcal O(\mathfrak A), \mathcal O^{\mathrm{gp}}(\mathfrak A))\).

    \ref{GeneralSingleGround}\(\implies\)\ref{GeneralSingleHyper}:
    By Theorem \ref{thm:GeneralTheorem},
    \(\mathbb A(X) \models \mathsf{S}_{\mathrm{fin}}(\mathcal O(\mathfrak A),\mathcal O^{\mathrm{gp}}(\mathfrak A))\).
    Moreover, also by an application of Theorem \ref{thm:GeneralTheorem},
    \[\mathcal O_{\mathbb A(X)}^{\mathrm{gp}}(\mathfrak A_{\mathbb A(X)})
    = \mathcal O_{\mathbb A(X)}(\mathfrak A_{\mathbb A(X)}) \cap [\mathscr T_{\mathbb A(X)}]^\omega.\]
    By the criterion in the definition of super suitable,
    \(\mathbb A(X) \models \mathsf{S}_{1}(\mathcal O(\mathfrak A),\mathcal O(\mathfrak A))\), and, by the above identity,
    we see that \(\mathbb A(X) \models \mathsf{S}_{1}(\mathcal O(\mathfrak A),\mathcal O^{\mathrm{gp}}(\mathfrak A))\).

    \ref{GeneralSingleHyper}\(\implies\)\ref{GeneralGerlitsNagy}:
    Since \(\mathbb A(X) \models \mathsf{S}_{1}(\mathcal O(\mathfrak A),\mathcal O^{\mathrm{gp}}(\mathfrak A)),\)
    \(\mathbb A(X) \models \mathsf{S}_{\mathrm{fin}}(\mathcal O(\mathfrak A),\mathcal O^{\mathrm{gp}}(\mathfrak A))\) which
    establishes that \(\mathbb A(X)\) is Hurewicz by Theorem \ref{thm:GeneralTheorem}.
    Also, by the criterion in the definition of super suitable, \(\mathbb A(X)\) is Rothberger.
\end{proof}

\section{Applications of the Hyperspace Correspondence}

\subsection{The Compact Sets}

It was shown in \cite{CHVietoris} that \(\mathbb K(X)\) captures the properties
of \(k\)-Lindel{\"{o}}f and \(k\)-Menger on \(X\).
Notably, \(k\)-Rothberger isn't in this list.
Indeed, \cite[Ex. 4.19]{CHVietoris} points out that \(\mathbb R\) is \(k\)-Rothberger but not Rothberger.
Since the property of Rothberger is hereditary with respect to closed subsets and \(\mathbb R\) embeds as
a closed subspace of \(\mathbb K(\mathbb R)\), \(\mathbb K(\mathbb R)\) is not Rothberger.
However, Theorem \ref{thm:GeneralTheorem} can be used to establish a relationship between \(X\) and \(\mathbb K(X)\) relative to the Hurewicz property.

We start by recalling the following fact regarding \(k\)-covers.
\begin{lemma}[{\cite[Cor. 4.13]{CHVietoris}}] \label{lem:kCoverThing}
    Let \(X\) be a space.
    \begin{itemize}
        \item
        If \(\mathscr U\) is a \(k\)-cover of \(X\), then \(\{ [U] : U \in \mathscr U \}\)
        is a \(k\)-cover of \(\mathbb K(X)\).
        \item
        If \(\mathscr U\) is a \(k\)-cover of \(\mathbb K(X)\), then
        \[\mathscr V := \{ V \in \mathscr T_X : (\exists U \in \mathscr U)\ [V] \subseteq U \}\]
        is a \(k\)-cover of \(X\).
    \end{itemize}
\end{lemma}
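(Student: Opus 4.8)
The plan is to handle the two bullets separately, relying on two standard facts about the Vietoris hyperspace that were already invoked in connection with property \ref{ClosureUnderA} and that ultimately come from \cite{MichaelTopologies}: (a) if \(\mathbf{K}\subseteq\mathbb{K}(X)\) is compact, then \(\bigcup\mathbf{K}\) is a compact subset of \(X\); and (b) if \(K_0\in K(X)\), then \([K_0]=\{L\in\mathbb{K}(X):L\subseteq K_0\}\) is a compact subset of \(\mathbb{K}(X)\). I will also use freely that \([W_1\cap W_2]\subseteq[W_1]\cap[W_2]\) for open \(W_1,W_2\subseteq X\), and, crucially, that a nonempty compact \(L\subseteq X\) with \(L\subseteq W_1\cup\cdots\cup W_n\) can be written as \(L=A_1\cup\cdots\cup A_n\) with each \(A_i\) a (possibly empty) compact subset of \(W_i\); the latter is an iterated application of the fact that disjoint compacta in a Hausdorff space admit disjoint open neighbourhoods.

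For the first bullet, assume \(\mathscr{U}\) is a \(k\)-cover of \(X\). Each \([U]\) is a basic open subset of \(\mathbb{K}(X)\) which is proper and nonempty (it contains \(\{x\}\) for \(x\in U\), while \(\{z\}\notin[U]\) whenever \(z\in X\setminus U\)), so \(\{[U]:U\in\mathscr{U}\}\subseteq\mathscr{T}_{\mathbb{K}(X)}\); and it covers \(\mathbb{K}(X)\) since every \(K\in\mathbb{K}(X)\) is a compact subset of \(X\) and therefore lies in some \(U\in\mathscr{U}\). Given a compact \(\mathbf{K}\subseteq\mathbb{K}(X)\), fact (a) makes \(\bigcup\mathbf{K}\) compact in \(X\), so \(\bigcup\mathbf{K}\subseteq U\) for some \(U\in\mathscr{U}\); then \(K\subseteq\bigcup\mathbf{K}\subseteq U\), i.e.\ \(K\in[U]\), for every \(K\in\mathbf{K}\), whence \(\mathbf{K}\subseteq[U]\). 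Thus \(\{[U]:U\in\mathscr{U}\}\) is a \(k\)-cover of \(\mathbb{K}(X)\).

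For the second bullet, assume \(\mathscr{U}\) is a \(k\)-cover of \(\mathbb{K}(X)\) and fix \(K\in K(X)\); it suffices to produce an open \(V\subseteq X\) with \(K\subseteq V\) and \([V]\subseteq U\) for some \(U\in\mathscr{U}\), as this places \(V\) in \(\mathscr{V}\) and, applied also to singletons, shows \(\mathscr{V}\) is an open cover of \(X\), so that the \(k\)-cover condition is verified. By fact (b), \([K]\) is compact in \(\mathbb{K}(X)\), so \([K]\subseteq U\) for some \(U\in\mathscr{U}\). Cover the compact set \([K]\) by basic open sets contained in \(U\) and take a finite subcover \(B_1,\dots,B_r\), say \(B_t=[O^t_1,\dots,O^t_{n(t)}]\subseteq U\). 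For \(x\in K\) and \(t\le r\), put \(J(x,t)=\{j\le n(t):x\in O^t_j\}\) and \(W_x=\bigcap\{O^t_j:t\le r,\ j\in J(x,t)\}\); this is a finite intersection of open sets containing \(x\) (indeed \(\{x\}\in[K]\) lies in some \(B_t\), which forces \(x\in O^t_j\) for all \(j\le n(t)\), so \(J(x,t)\ne\emptyset\) and \(x\) belongs to every set intersected). As \(K\) is compact, \(K\subseteq V:=W_{x_1}\cup\cdots\cup W_{x_p}\) for finitely many \(x_1,\dots,x_p\in K\); and \(V\ne X\) because \([X]=\mathbb{K}(X)\) is not contained in the proper open set \(U\), so \(V\in\mathscr{T}_X\).

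It then remains to check \([V]\subseteq U\). Given a nonempty compact \(L\subseteq V\), decompose \(L=A_1\cup\cdots\cup A_p\) with each \(A_i\) a compact subset of \(W_{x_i}\), set \(S=\{i:A_i\ne\emptyset\}\), and let \(F=\{x_i:i\in S\}\), a nonempty finite subset of \(K\); choose \(t^\ast\le r\) with \(F\in B_{t^\ast}\). For each \(i\in S\) we have \(x_i\in F\subseteq\bigcup_j O^{t^\ast}_j\), so \(J(x_i,t^\ast)\ne\emptyset\) and \(A_i\subseteq W_{x_i}\subseteq O^{t^\ast}_j\) for every \(j\in J(x_i,t^\ast)\); summing over \(i\in S\) yields \(L\subseteq\bigcup_j O^{t^\ast}_j\), and since each \(O^{t^\ast}_j\) meets \(F\) it meets some \(A_i\), hence \(L\). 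Therefore \(L\in B_{t^\ast}\subseteq U\), and \([V]\subseteq U\) as needed. I expect the second bullet to be the main obstacle: passing from \(\mathbb{K}(X)\) down to \(X\) requires a Wallace-style (tube-lemma) simultaneous refinement, whose heart is the decomposition of an arbitrary compact \(L\subseteq V\) according to the finite cover \(W_{x_1},\dots,W_{x_p}\) and thus an iteration of Hausdorff separation of disjoint compacta, whereas the first bullet is immediate once fact (a) is granted.
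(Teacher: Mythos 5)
Your proof is correct. Note that the paper itself supplies no argument for this lemma---it is imported verbatim as \cite[Cor.~4.13]{CHVietoris}---so there is no in-text proof to compare against; what you have written is a complete, self-contained derivation. The first bullet is the expected short argument from Michael's theorem that a compact subfamily of \(\mathbb K(X)\) has compact union (the same fact the paper invokes for closure under \(\mathfrak A\)-unions). The second bullet is the substantive part, and your Wallace-style tube argument is sound: compactness of \([K]=\{L\in\mathbb K(X):L\subseteq K\}\) yields a single \(U\in\mathscr U\) containing it; the sets \(W_x\) correctly record every basic ``slot'' \(O^t_j\) that \(x\) occupies; and the verification that \([V]\subseteq U\) via the decomposition \(L=A_1\cup\cdots\cup A_p\) (a shrinking of a finite open cover of the compact Hausdorff space \(L\), obtainable by iterated Hausdorff separation or the normal-space shrinking lemma) together with the choice of \(t^\ast\) from the finite witness set \(F\subseteq K\) checks both halves of membership in \([O^{t^\ast}_1,\ldots,O^{t^\ast}_{n(t^\ast)}]\), the containment and the nonempty intersections. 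The only cosmetic issue is that your justification of \(V\neq X\) quietly presupposes \([V]\subseteq U\), which you establish only in the following paragraph; reordering fixes this, since the proof of \([V]\subseteq U\) nowhere uses \(V\in\mathscr T_X\).
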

\begin{theorem}[{\cite[Cor. 4.17]{CHVietoris}}] \label{thm:KMenger}
    For any space \(X\), the following are equivalent:
    \begin{itemize}
        \item
        \(\mathbb K(X)\) is Menger.
        \item
        \(\mathbb K(X)\) is \(k\)-Menger.
        \item
        \(X\) is \(k\)-Menger.
    \end{itemize}
\end{theorem}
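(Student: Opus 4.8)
The plan is to close the cycle ``$\mathbb{K}(X)$ Menger $\Rightarrow$ $X$ is $k$-Menger $\Rightarrow$ $\mathbb{K}(X)$ is $k$-Menger $\Rightarrow$ $\mathbb{K}(X)$ Menger'', using as the only real inputs the cover-translation Lemma~\ref{lem:kCoverThing} and the standard fact (see \cite{MichaelTopologies}) that $\bigcup\mathbf{K}$ is compact in $X$ whenever $\mathbf{K}$ is compact in $\mathbb{K}(X)$.

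For the first arrow, given $k$-covers $\langle\mathscr{U}_n:n\in\omega\rangle$ of $X$, the first part of Lemma~\ref{lem:kCoverThing} turns each into an open cover $\{[U]:U\in\mathscr{U}_n\}$ of $\mathbb{K}(X)$, so Menger of $\mathbb{K}(X)$ yields finite $\mathscr{F}_n\subseteq\mathscr{U}_n$ with $\bigcup_n\{[U]:U\in\mathscr{F}_n\}$ an open cover of $\mathbb{K}(X)$; then $\bigcup_n\mathscr{F}_n$ is a $k$-cover of $X$, since any compact $K\subseteq X$ is a point of $\mathbb{K}(X)$, hence lies in some $[U]$ with $U\in\bigcup_n\mathscr{F}_n$, i.e.\ $K\subseteq U$. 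For the second arrow, given $k$-covers $\langle\mathscr{U}_n\rangle$ of $\mathbb{K}(X)$, the second part of Lemma~\ref{lem:kCoverThing} makes $\mathscr{V}_n:=\{V\in\mathscr{T}_X:(\exists U\in\mathscr{U}_n)\ [V]\subseteq U\}$ a $k$-cover of $X$; apply $k$-Menger of $X$ to obtain finite $\mathscr{G}_n\subseteq\mathscr{V}_n$ with $\bigcup_n\mathscr{G}_n$ a $k$-cover of $X$, choose $U_V\in\mathscr{U}_n$ with $[V]\subseteq U_V$ for each $V\in\mathscr{G}_n$, and set $\mathscr{F}_n:=\{U_V:V\in\mathscr{G}_n\}$. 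For compact $\mathbf{K}\subseteq\mathbb{K}(X)$ the set $\bigcup\mathbf{K}$ is compact in $X$, so $\bigcup\mathbf{K}\subseteq V$ for some $V\in\mathscr{G}_n$; then every $K\in\mathbf{K}$ satisfies $K\subseteq\bigcup\mathbf{K}\subseteq V$, whence $K\in[V]\subseteq U_V$, so $\mathbf{K}\subseteq U_V\in\mathscr{F}_n$, and $\bigcup_n\mathscr{F}_n$ is a $k$-cover of $\mathbb{K}(X)$.

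The last arrow is the soft general implication that $k$-Menger implies Menger. Given open covers $\langle\mathscr{W}_n\rangle$ of $\mathbb{K}(X)$: if some $\mathscr{W}_n$ admits a finite subcover of $\mathbb{K}(X)$, then the selection at that level alone can be taken to cover $\mathbb{K}(X)$; otherwise every finite union of members of each $\mathscr{W}_n$ is a proper open set, so $\mathscr{W}_n^{\ast}:=\{\bigcup\mathscr{H}:\mathscr{H}\in[\mathscr{W}_n]^{<\omega}\}$ is a $k$-cover of $\mathbb{K}(X)$ (a finite subcover of any given compact set has union in $\mathscr{W}_n^{\ast}$ containing it). Feeding $\langle\mathscr{W}_n^{\ast}\rangle$ to $k$-Menger and unpacking each chosen finite union back into a finite subfamily of the original $\mathscr{W}_n$ gives finite selections whose total union is still $\mathbb{K}(X)$.

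The bookkeeping to be careful about is that every family handed to a selection principle must actually be a non-trivial open cover in the sense of $\mathcal{O}_X(\mathcal{A})$; this is why the last arrow needs the ``finite subcover'' case split, and why in the first two arrows one should note that the produced families, being $k$-covers, automatically consist of proper open sets and cover the whole space. I expect essentially all of the genuine geometric content to sit in the middle arrow $X$ $k$-Menger $\Rightarrow$ $\mathbb{K}(X)$ $k$-Menger, where Lemma~\ref{lem:kCoverThing} together with the compactness of $\bigcup\mathbf{K}$ does the work; the first arrow is its dual, run through Menger rather than $k$-Menger, and the third is a general fact about selective covering properties.
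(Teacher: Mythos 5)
Your proof is correct, but there is nothing in the paper to compare it against: Theorem~\ref{thm:KMenger} is imported verbatim as \cite[Cor. 4.17]{CHVietoris} and no proof is given here. Your three-arrow cycle is a sound reconstruction, and it is worth noting that it derives the theorem from exactly the more primitive ingredients the paper does make explicit: the two cover-translation statements of Lemma~\ref{lem:kCoverThing} (which instantiate \ref{CoverTransferA} and \ref{CoverTransferB}), and the Michael fact that $\bigcup\mathbf K$ is compact in $X$ whenever $\mathbf K$ is compact in $\mathbb K(X)$ (which is the paper's closure-under-$\mathfrak A$-unions, \ref{ClosureUnderA}); the paper instead treats the Menger equivalence as a separate axiom \ref{GeneralMenger} to be checked, so your argument in effect shows that \ref{GeneralMenger} is redundant given \ref{ClosureUnderA}--\ref{CoverTransferB} in this context. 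All three arrows check out: the first pulls a Menger selection on $\mathbb K(X)$ back through $U\mapsto[U]$ and uses that each compact $K$ is a point of $\mathbb K(X)$; the second pushes a $k$-Menger selection on $X$ forward via a choice $V\mapsto U_V$ with $[V]\subseteq U_V$ and uses compactness of $\bigcup\mathbf K$; the third is the standard reduction of Menger to $k$-Menger by passing to finite unions, and your case split on whether some $\mathscr W_n$ admits a finite subcover is precisely the bookkeeping needed because $\mathcal O$ and $\mathcal K$ are required here to consist of proper open sets. The only cosmetic gaps are quantifier placement (``for some $n$ and some $V\in\mathscr G_n$'' in the second arrow) and noting that $U\mapsto[U]$ need not be injective, neither of which affects the argument.
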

\begin{lemma} \label{lem:MainKLemma}
    For any space \(X\),
    if \(X \models \mathsf S_{\mathrm{fin}}(\mathcal K, \mathcal K^{\mathrm{gp}})\),
    then \(X\) is Hurewicz.
\end{lemma}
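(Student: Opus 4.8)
The goal is to show that $\mathsf S_{\mathrm{fin}}(\mathcal K,\mathcal K^{\mathrm{gp}})$ on $X$ implies $X$ is Hurewicz. The natural route is to feed an arbitrary sequence of ordinary open covers $\langle \mathscr U_n : n \in \omega\rangle \in \mathcal O_X^\omega$ through the $k$-cover machinery and then extract a Hurewicz-style selection. First I would observe that each $\mathscr U_n$ itself need not be a $k$-cover, so the first step is to manufacture $k$-covers from them. The standard device is to replace $\mathscr U_n$ by the collection $\mathscr U_n^\ast = \{\bigcup\mathscr F : \mathscr F \in [\mathscr U_n]^{<\omega}\}$ of finite unions; since the Hurewicz property only asks for finite selections anyway, a single set from $\mathscr U_n^\ast$ corresponds to a legitimate finite subset of $\mathscr U_n$. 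But $\mathscr U_n^\ast$ is a $k$-cover only if $\mathscr U_n$ has the property that every compact set is covered by finitely many members — which is automatic, since $\mathscr U_n$ is an open cover and compact sets are, by definition, covered by finitely many open sets from any open cover. So $\mathscr U_n^\ast \in \mathcal K_X$ for every $n$.

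\textbf{Main steps.} Having arranged $\mathscr U_n^\ast \in \mathcal K_X$, I would apply $\mathsf S_{\mathrm{fin}}(\mathcal K, \mathcal K^{\mathrm{gp}})$ to the sequence $\langle \mathscr U_n^\ast : n \in \omega\rangle$ to obtain finite sets $\mathscr H_n \in [\mathscr U_n^\ast]^{<\omega}$ with $\mathscr H := \bigcup_n \mathscr H_n \in \mathcal K_X^{\mathrm{gp}}$. Next, using Lemma~\ref{lem:gammaLike} in the special case $\mathcal A = K(X)$ (so $\mathcal O_X(\mathcal A) = \mathcal K_X$), I would actually get something cleaner: rather than invoke Lemma~\ref{lem:gammaLike} directly, it is simplest to argue from the groupability witness $\phi : \mathscr H \to \omega$. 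Groupability gives: for every compact $K \subseteq X$ there is $m_K$ with $(\forall n \geq m_K)(\exists H \in \phi^{-1}(n))\ K \subseteq H$. In particular this holds for every singleton $K = \{x\}$. Then, for each $n$, I would group the $\phi$-blocks appropriately — since $\phi^{-1}(n)$ is finite and each $H \in \phi^{-1}(n) \subseteq \mathscr H = \bigcup_k \mathscr H_k$ lies in some $\mathscr H_{k(H)}$, and each $H \in \mathscr U_{k(H)}^\ast$ is a finite union of members of $\mathscr U_{k(H)}$ — I need to redistribute so the selection lands in the right $\mathscr U_n$. The clean way: reindex so that the $n$-th finite selection $\mathscr F_n \in [\mathscr U_n]^{<\omega}$ is taken to be the finite set of members of $\mathscr U_n$ whose finite-union-parent $H$ appears in $\phi^{-1}$-blocks up to stage $n$ but whose parent sits in $\mathscr H_n$; more precisely, let $\mathscr F_n$ collect, for each $H \in \mathscr H_n \cap \phi^{-1}(\text{some block} \leq n)$, a fixed finite subfamily of $\mathscr U_n$ with union $H$. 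The point is that for a fixed $x$, all but finitely many blocks $\phi^{-1}(n)$ contain some $H \supseteq \{x\}$, and that $H$'s constituent $\mathscr U$-sets then land in $\bigcup \mathscr F_{k}$ for the appropriate $k$ — so $x \in \bigcup\mathscr F_k$ for all sufficiently large $k$.

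\textbf{Where the friction is.} The bookkeeping between the two indices — the "inning index" $n$ of the original covers and the "block index" of the groupability function $\phi$ — is the only delicate point; everything else is soft. The risk is an off-by-one or a case where a parent set $H$ contributing to covering $x$ at block $j$ has $k(H) < j$, so its $\mathscr U$-constituents were assigned to $\mathscr F_{k(H)}$ at an earlier inning and we might miss $x$ at inning $j$. To avoid this I would instead directly invoke Lemma~\ref{lem:gammaLike} with $\mathcal A = K(X)$ applied to $\langle \mathscr U_n^\ast : n\in\omega\rangle$: it hands back $\mathscr G_n \in [\mathscr U_n^\ast]^{<\omega}$ such that for every compact $K$, eventually (in $n$) some member of $\mathscr G_n$ contains $K$. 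Applying this to singletons $K = \{x\}$ gives: for every $x$, there is $m$ so that for all $n \geq m$ some $G \in \mathscr G_n$ has $x \in G$. Finally set $\mathscr F_n = \bigcup\{\gamma_{u,n}(G) : G \in \mathscr G_n\}$, where $\gamma_{u,n} : \mathscr U_n^\ast \to [\mathscr U_n]^{<\omega}$ is a choice function with $\bigcup \gamma_{u,n}(G) = G$. Then $\mathscr F_n \in [\mathscr U_n]^{<\omega}$ and $x \in G \subseteq \bigcup\mathscr F_n$ for all $n \geq m$, which is exactly the Hurewicz conclusion. This parallels the $\ref{GeneralGroundGroup}\implies\ref{GeneralHurewicz}$ argument in Theorem~\ref{thm:GeneralTheorem}, specialized to the case $\mathbb A(X)$ replaced by $X$ itself and $\mathcal A = K(X)$, so the whole proof is essentially a two-line citation of Lemma~\ref{lem:gammaLike} once the $\mathscr U_n \rightsquigarrow \mathscr U_n^\ast$ passage is noted.
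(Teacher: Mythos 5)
Your final argument is correct and is essentially identical to the paper's proof: the paper likewise passes from $\mathscr U_n$ to the $k$-cover of finite unions, invokes Lemma~\ref{lem:gammaLike} with $\mathcal A = K(X)$, pulls each selected union back to a finite subfamily of $\mathscr U_n$ via a choice function, and concludes by applying the eventual-covering property to singletons $\{x\}$. The only difference is that you first explore (and rightly discard) a more hands-on bookkeeping route with the groupability function before settling on the clean version.
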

\begin{proof}
    Suppose \(X \models \mathsf S_{\mathrm{fin}}(\mathcal K, \mathcal K^{\mathrm{gp}})\)
    and let \(\langle \mathscr U_n : n \in \omega \rangle\) be a sequence of open covers of \(X\).
    Note that
    \[\mathscr V_n := \left\{ \bigcup \mathscr F : \mathscr F \in [\mathscr U_n]^{<\omega} \right\} \in \mathcal K_X.\]
    By Lemma \ref{lem:gammaLike}, we can produce
    \(\mathscr G_n \in \left[ \mathscr V_n \right]^{<\omega}\) for each \(n \in \omega\)
    with the properties guaranteed in the lemma.
    Let \(\gamma_n : \mathscr G_n \to \left[ \mathscr U_n \right]^{<\omega}\) be a choice function
    so that \(V = \bigcup \gamma_n(V)\) for each \(V\in \mathscr G_n\).
    Then define
    \[\mathscr F_n = \bigcup_{V\in\mathscr G_n} \gamma_n(V)\]
    and notice that \(\mathscr F_n \in \left[ \mathscr U_n \right]^{<\omega}\).
    To finish the proof, we need only show that \(\langle \mathscr F_n : n \in \omega \rangle\)
    witnesses the Hurewicz criterion.
    So let \(x \in X\) and observe that \(\{x\} \in \mathbb K(X)\).
    By the use of Lemma \ref{lem:gammaLike}, there is some \(m \in \omega\) so that,
    for every \(n \geq m\), \(\{x\} \subseteq \bigcup \mathscr G_n\).
    Hence, for every \(n \geq m\), there is some \(V \in \mathscr G_n\) for which \(x \in V\).
    Observe that, for \(V \in \mathscr G_n\), \[V = \bigcup \gamma_n(V)
    \subseteq \bigcup_{V \in \mathscr G_n} \bigcup \gamma_n(V) = \bigcup \mathscr F_n.\]
    Therefore, for every \(n \geq m\), \(x \in \bigcup \mathscr F_n\).
\end{proof}
We will also use the following game-theoretic results.
\begin{theorem}[{\cite[Cor. 4.18 and Thm. 4.21]{CHVietoris}}] \label{kPawlikowski}
    For any space \(X\) and \(\square \in \{1 , \mathrm{fin} \}\),
    \[\mathrm{I} \underset{\mathrm{pre}}{\uparrow} \mathsf G_{\square}(\mathcal K_X,\mathcal K_X)
    \iff \mathrm{I} \uparrow \mathsf G_{\square}(\mathcal K_X,\mathcal K_X).\]
\end{theorem}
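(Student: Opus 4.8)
The implication $\mathrm{I}\underset{\mathrm{pre}}{\uparrow}\mathsf G_{\square}(\mathcal K_X,\mathcal K_X)\implies\mathrm{I}\uparrow\mathsf G_{\square}(\mathcal K_X,\mathcal K_X)$ is immediate, since a predetermined strategy is in particular a strategy; all the content lies in the converse, which I would establish contrapositively as $\mathrm{I}\underset{\mathrm{pre}}{\not\uparrow}\mathsf G_{\square}(\mathcal K_X,\mathcal K_X)\implies\mathrm{I}\not\uparrow\mathsf G_{\square}(\mathcal K_X,\mathcal K_X)$. By Remark~\ref{rmk:SelectionPrinciple} the hypothesis is exactly $X\models\mathsf S_{\square}(\mathcal K_X,\mathcal K_X)$, i.e.\ $X$ is $k$-Menger when $\square=\mathrm{fin}$ and $k$-Rothberger when $\square=1$; from this I must show that no strategy $\sigma$ for One is winning. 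A preliminary reduction: applying $\mathsf S_{\square}(\mathcal K_X,\mathcal K_X)$ to a constant sequence shows that every $k$-cover of $X$ has a countable subset which is again a $k$-cover. Hence I may pass, by recursion on the length of the partial play, to a strategy $\sigma'$ for One that always returns a fixed countably-enumerated $k$-subcover of $\sigma$'s move; since $\sigma'(p)\subseteq\sigma(p)$ for every partial play $p$ legal against $\sigma'$, and such a $p$ is also legal against $\sigma$, any play defeating $\sigma'$ defeats $\sigma$, and so I may assume One only ever plays countable $k$-covers.

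With One so restricted, and with Two restricted via the fixed enumerations to moves determined by a single index, the legal partial plays against $\sigma'$ form a countably branching---hence countable---tree $T$; write $\sigma'(t)$ for the countable $k$-cover One plays at a node $t\in T$. Since $T$ is countable, I would apply $\mathsf S_{\square}(\mathcal K_X,\mathcal K_X)$ to the sequence $\langle\sigma'(t):t\in T\rangle$ and obtain, for each $t\in T$, a finite (respectively one-element) subset $\mathscr G_t\subseteq\sigma'(t)$ with $\bigcup_{t\in T}\mathscr G_t$ a $k$-cover of $X$; thus every compact $K\subseteq X$ lies inside some member of some $\mathscr G_t$.

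The step I expect to be the main obstacle is that the sets $\mathscr G_t$ are attached to pairwise incomparable nodes of $T$, whereas an actual play of $\mathsf G_{\square}(\mathcal K_X,\mathcal K_X)$ follows a single branch; converting the spread-out selection into one concentrated along a branch is precisely the delicate core of the Hurewicz--Pawlikowski strategy-elimination arguments, and it is here that the full strength of $\mathsf S_{\square}$---not just the $k$-Lindel\"{o}fness extracted above---is needed. I would carry it out by the standard fusion technique: building a branch $b\in\omega^{\omega}$ recursively, using repeated applications of $\mathsf S_{\square}(\mathcal K_X,\mathcal K_X)$ on the subtrees of $T$ still available at each stage to choose Two's moves large enough that the accumulated play along $b$ exhausts a $k$-cover of $X$. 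This is the argument underlying the game characterization of the Hurewicz property recalled before Theorem~\ref{thm:HurewiczGameChar}, transported to the selection game $\mathsf G_{\square}$ and to $k$-covers. The resulting branch $b$ codes a play of $\mathsf G_{\square}(\mathcal K_X,\mathcal K_X)$ against $\sigma'$, hence against $\sigma$, that Two wins, so $\sigma$ is not winning; running this uniformly for $\square=1$ and $\square=\mathrm{fin}$ gives the statement, as in \cite[Cor. 4.18 and Thm. 4.21]{CHVietoris}.
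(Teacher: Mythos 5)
First, a point of comparison: the paper does not prove this statement at all — it is imported verbatim from \cite{CHVietoris} (note the label the author gives it, which already signals that the \(\square=1\) case rests on an analogue of Pawlikowski's theorem). There the equivalences are not obtained by redoing a Pawlikowski-type argument for \(k\)-covers from scratch, but by transferring strategies between the game on \(X\) and the corresponding game on the hyperspace \(\mathbb K(X)\) (via the correspondences \(U\mapsto[U]\) and its reverse, as in Lemma~\ref{lem:kCoverThing}) and then appealing to the classical strategic characterizations of the Menger and Rothberger games. So there is no internal proof to measure you against, and the cited proof takes a genuinely different route from yours.

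As for your argument itself: the trivial direction, the use of Remark~\ref{rmk:SelectionPrinciple}, and the reduction to countable \(k\)-covers (hence to a countable tree \(T\) of partial plays) are all fine, but everything after that describes what a proof would have to accomplish rather than supplying one. A single application of \(\mathsf S_{\square}(\mathcal K,\mathcal K)\) to \(\langle\sigma'(t):t\in T\rangle\) yields selections scattered over pairwise incomparable nodes, and — as you say yourself — the entire content of the theorem is in converting these into a winning play along a single branch. Invoking ``the standard fusion technique'' is neither a citation nor an argument: for \(\square=\mathrm{fin}\) the required step is essentially the Hurewicz--Scheepers argument, in which one builds, for each level \(n\), an auxiliary \(k\)-cover out of intersections taken along branches of length \(n\) (compare the construction of the covers \(\mathscr W_n\) in Lemma~\ref{lem:gammaLike}) and applies \(\mathsf S_{\mathrm{fin}}\) to that \emph{single \(\omega\)-sequence} of covers, rather than performing ``repeated applications on subtrees''; this would have to be written out. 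For \(\square=1\) the situation is worse: the claim is a \(k\)-cover analogue of Pawlikowski's theorem, whose proof is not a routine ``choose Two's moves large enough'' recursion, and a transfinite or unstructured sequence of applications of \(\mathsf S_1\) does not cohere into one application of the principle. So the proposal has a genuine gap at its core step, and the gap is largest exactly in the case where the theorem is hardest.
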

\begin{corollary} \label{cor:MainTheorem}
    For any space \(X\), the following are equivalent:
    \begin{enumerate}[label=(\roman*)]
        \item \label{thm:CompactHurewicz}
        \(\mathbb K(X)\) is Hurewicz.
        \item \label{thm:CompactHyperK}
        \(\mathbb K(X) \models \mathsf{S}_{\mathrm{fin}}(\mathcal K,\mathcal K^{\mathrm{gp}})\).
        \item
        \(X \models \mathsf S_{\mathrm{fin}}(\mathcal K, \mathcal K)\) and \(\mathcal K_X^{\mathrm{gp}} = \mathcal K_X \cap [\mathscr T_X]^\omega\).
        \item \label{thm:CompatK}
        \(X \models \mathsf{S}_{\mathrm{fin}}(\mathcal K,\mathcal K^{\mathrm{gp}})\).
        \item \label{kFiniteGameTheoretic}
        One does not have a winning strategy in \(\mathsf G_{\mathrm{fin}}(\mathcal K_X, \mathcal K_X^{\mathrm{gp}})\).
    \end{enumerate}
\end{corollary}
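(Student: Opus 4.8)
The plan is to realize Corollary~\ref{cor:MainTheorem} as an instance of the general machinery of Section~4, applied to the adequate context in which $\mathfrak T$ is the class of non-compact spaces and $\mathfrak A$ is the operator $X \mapsto K(X)$ producing all non-empty compact subsets; in this context $\mathbb A(X) = \mathbb K(X)$ and $\mathcal O_X(\mathfrak A_X) = \mathcal K_X$, while $\mathcal O_{\mathbb A(X)}(\mathfrak A_{\mathbb A(X)})$ is the set of $k$-covers of $\mathbb K(X)$. The first step is to confirm that this really is an adequate context: for non-compact $X$, the collection $K(X)$ consists of proper closed subsets (since $X$ itself is not compact), is closed under finite unions and under closed non-empty subsets, and contains $[X]^{<\omega}$, so it is an ideal of compact sets; moreover $\mathbb K(X)$ is again non-compact, hence lies in $\mathfrak T$. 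The paper's blanket convention of restricting attention to non-compact spaces is precisely what makes $K(X)$ an ideal of closed sets here, so there is nothing to lose.

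The second step is to verify that $\mathfrak A$ is suitable by checking \ref{ClosureUnderA}--\ref{GeneralMenger} one at a time, each against a result already on record: \ref{ClosureUnderA} is the $\mathfrak A$-union closure of the compact-sets operator noted just after Definition~\ref{def:ClosedUnderAUnions} (following \cite{CHVietoris,MichaelTopologies}); \ref{CoverTransferA} and \ref{CoverTransferB} are exactly the two bullet points of Lemma~\ref{lem:kCoverThing}; \ref{SufficientForHurewicz} is Lemma~\ref{lem:MainKLemma}; and \ref{GeneralMenger} is the triple equivalence of Theorem~\ref{thm:KMenger}. Once suitability is established, Theorem~\ref{thm:GeneralTheorem} immediately delivers the equivalence of \ref{thm:CompactHurewicz}, \ref{thm:CompactHyperK}, the condition ``$X \models \mathsf S_{\mathrm{fin}}(\mathcal K,\mathcal K)$ and $\mathcal K_X^{\mathrm{gp}} = \mathcal K_X \cap [\mathscr T_X]^\omega$'', and \ref{thm:CompatK}.

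It remains to fold in the game-theoretic condition \ref{kFiniteGameTheoretic}. For this I would apply Theorem~\ref{thm:GeneralStrategic} with $\square = \mathrm{fin}$: its hypothesis, namely $\mathrm I \underset{\mathrm{pre}}{\uparrow} \mathsf G_{\mathrm{fin}}(\mathcal K_X,\mathcal K_X) \iff \mathrm I \uparrow \mathsf G_{\mathrm{fin}}(\mathcal K_X,\mathcal K_X)$, is exactly the $\mathrm{fin}$ case of Theorem~\ref{kPawlikowski}. The conclusion is $\mathrm I \underset{\mathrm{pre}}{\uparrow} \mathsf G_{\mathrm{fin}}(\mathcal K_X,\mathcal K_X^{\mathrm{gp}}) \iff \mathrm I \uparrow \mathsf G_{\mathrm{fin}}(\mathcal K_X,\mathcal K_X^{\mathrm{gp}})$. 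Combining this with Remark~\ref{rmk:SelectionPrinciple}, which identifies $X \models \mathsf S_{\mathrm{fin}}(\mathcal K,\mathcal K^{\mathrm{gp}})$ with $\mathrm I \underset{\mathrm{pre}}{\not\uparrow} \mathsf G_{\mathrm{fin}}(\mathcal K_X,\mathcal K_X^{\mathrm{gp}})$, we get that \ref{thm:CompatK} is equivalent to One having no winning strategy in $\mathsf G_{\mathrm{fin}}(\mathcal K_X,\mathcal K_X^{\mathrm{gp}})$, i.e.\ to \ref{kFiniteGameTheoretic}, which closes the cycle.

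There is no genuinely hard step here: every ingredient has been prepared in Sections~3 and~4 and in the cited literature, so the only thing that requires care is the bookkeeping of matching each suitability axiom \ref{ClosureUnderA}--\ref{GeneralMenger} to the correct previously-established statement and keeping the non-compactness hypothesis in view so that $K(X)$ genuinely forms an ideal of closed sets. The one mild subtlety worth flagging is that Theorem~\ref{thm:GeneralStrategic} is invoked only for $\square = \mathrm{fin}$; the single-selection instance is not needed for this corollary, and in fact $\mathbb K(\mathbb R)$ shows the $\square = 1$ analogue fails.
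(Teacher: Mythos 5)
Your proposal is correct and follows essentially the same route as the paper: it instantiates the adequate context with $\mathfrak T$ the non-compact spaces and $\mathfrak A$ the compact-sets operator, checks suitability against Lemma~\ref{lem:kCoverThing}, Lemma~\ref{lem:MainKLemma}, and Theorem~\ref{thm:KMenger}, invokes Theorem~\ref{thm:GeneralTheorem} for the equivalence of the first four conditions, and obtains the game-theoretic condition from Theorems~\ref{thm:GeneralStrategic} and~\ref{kPawlikowski}. Your matching of each suitability axiom to its supporting result is just a more explicit version of the paper's own bookkeeping.
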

\begin{proof}
    Let \(\mathfrak T\) be the class of all non-compact spaces and \(\mathfrak A\) be the operator
    that generates the compact subsets of a given space \(X\).
    Then \((\mathfrak T, \mathfrak A)\) is an adequate context since it is well-known that \(X\) is compact
    if and only if \(\mathbb K(X)\) is compact \cite{MichaelTopologies}.
    As mentioned above, by results of \cite{MichaelTopologies}, \(\mathfrak A\) is closed under \(\mathfrak A\)-unions.
    Lemmas \ref{lem:kCoverThing} and \ref{lem:MainKLemma} and Theorem \ref{thm:KMenger} then
    finish showing that \(\mathfrak A\) is suitable.
    Thus, Theorem \ref{thm:GeneralTheorem} applies to give the equivalence of \ref{thm:CompactHurewicz}--\ref{thm:CompatK}.
    The equivalence of \ref{thm:CompatK} and \ref{kFiniteGameTheoretic}
    follows from Theorems \ref{thm:GeneralStrategic} and \ref{kPawlikowski}.
\end{proof}
As mentioned above, \(\mathbb R\) is an example of a space which is \(k\)-Rothberger
but where \(\mathbb K(X)\) is not Rothberger.
So, we cannot apply Theorem \ref{thm:GeneralSingleSelections} in this setting.
In fact, we can even say more.
\begin{example} \label{example:RealsAgain}
    The real line \(\mathbb R\) has the property that
    \(\mathbb R \models \mathsf S_1(\mathcal K, \mathcal K^{\mathrm{gp}})\)
    but \(\mathbb K(\mathbb R)\) is not Gerlits-Nagy.

    Indeed, for any sequence \(\langle \mathscr U_n : n \in \omega \rangle\)
    of \(k\)-covers of \(\mathbb R\), choose \(U_n \in \mathscr U_n \setminus \{ U_k : k < n \}\)
    to be so that \([-2^n,2^n] \subseteq U_n\).
    Then define \(\phi : \{ U_n : n \in \omega \} \to \omega\) by \(\phi(U_n) = n\).
    Thus, \(\{ U_n : n \in \omega \}\) is groupable.

    On the other hand, as noted previously, \(\mathbb K(\mathbb R)\) is not Rothberger,
    so \(\mathbb K(\mathbb R)\) is not Gerlits-Nagy.
\end{example}
Therefore, in light of Example \ref{example:RealsAgain}, we see that a single-selection
analog to Corollary \ref{cor:MainTheorem} is not obtained.
Nevertheless, we still obtain the following as a direct application of Theorems \ref{thm:GeneralStrategic}
and \ref{kPawlikowski}.
\begin{corollary} \label{cor:GroupPawlikowski}
    For any space \(X\),
    \[\mathrm{I} \underset{\mathrm{pre}}{\uparrow} \mathsf G_1(\mathcal K_X,\mathcal K_X^{\mathrm{gp}})
    \iff \mathrm{I} \uparrow \mathsf G_1(\mathcal K_X,\mathcal K_X^{\mathrm{gp}}).\]
\end{corollary}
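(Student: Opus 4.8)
The plan is to apply Theorem~\ref{thm:GeneralStrategic} with \(\square = 1\) in exactly the context used to prove Corollary~\ref{cor:MainTheorem}. First I would let \(\mathfrak T\) be the class of all non-compact spaces and let \(\mathfrak A\) be the operator producing the compact subsets of a space, so that \(\mathfrak A_X = K(X)\), \(\mathcal O_X(\mathfrak A_X) = \mathcal K_X\), and \(\mathcal O_X^{\mathrm{gp}}(\mathfrak A_X) = \mathcal K_X^{\mathrm{gp}}\) for every \(X \in \mathfrak T\). As already established in the proof of Corollary~\ref{cor:MainTheorem}, \((\mathfrak T, \mathfrak A)\) is an adequate context and \(\mathfrak A\) is suitable, so the only remaining hypothesis of Theorem~\ref{thm:GeneralStrategic} to verify is that, for every \(X \in \mathfrak T\), \(\mathrm{I} \underset{\mathrm{pre}}{\uparrow} \mathsf G_1(\mathcal K_X, \mathcal K_X) \iff \mathrm{I} \uparrow \mathsf G_1(\mathcal K_X, \mathcal K_X)\); this is precisely Theorem~\ref{kPawlikowski} specialized to \(\square = 1\). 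Therefore Theorem~\ref{thm:GeneralStrategic} yields the desired equivalence for every non-compact \(X\).

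To obtain the statement for an arbitrary space, I would dispose of the compact case directly. If \(X\) is compact then \(X \in K(X)\), and since each member of \(\mathscr T_X\) is a proper subset of \(X\), there is no \(k\)-cover of \(X\); that is, \(\mathcal K_X = \emptyset\). Then One has no strategy at all in \(\mathsf G_1(\mathcal K_X, \mathcal K_X^{\mathrm{gp}})\) (a strategy would be a map from the nonempty set \(\{\langle\rangle\}\) into \(\emptyset\)), so both sides of the claimed equivalence are vacuously false, and the equivalence holds trivially.

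I do not expect a genuine obstacle: the substantive content is entirely in Theorems~\ref{thm:GeneralStrategic} and~\ref{kPawlikowski}, together with the verification — carried out already for Corollary~\ref{cor:MainTheorem} — that the compact-subsets operator is suitable. The only mild subtlety is that the corollary is phrased for all spaces while the adequate context \((\mathfrak T, \mathfrak A)\) covers only non-compact ones, which is why a one-line treatment of the compact case must be appended. One could also remark that the implication \(\mathrm{I} \underset{\mathrm{pre}}{\uparrow} \mathsf G_1(\mathcal K_X, \mathcal K_X^{\mathrm{gp}}) \implies \mathrm{I} \uparrow \mathsf G_1(\mathcal K_X, \mathcal K_X^{\mathrm{gp}})\) is automatic, so that the real content reduces to its converse, which is exactly what the argument above supplies.
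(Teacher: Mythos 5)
Your proposal is correct and follows exactly the paper's route: the paper also obtains this corollary as a direct application of Theorem~\ref{thm:GeneralStrategic} (with \(\square = 1\)) together with Theorem~\ref{kPawlikowski}, using the suitability of the compact-subsets operator already verified in the proof of Corollary~\ref{cor:MainTheorem}. Your explicit disposal of the compact case (where \(\mathcal K_X = \emptyset\) and neither player has any strategy) is a harmless addition that the paper leaves to its standing convention of excluding trivial cases.
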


\subsection{The Finite Sets}

In \cite{CHVietoris}, it was shown that \(\mathcal P_{\mathrm{fin}}(X)\) captures
the properties of \(\omega\)-Lindel{\"{o}}f, \(\omega\)-Menger, and \(\omega\)-Rothberger on \(X\),
just as the finite powers of \(X\) do (see \cite{GerlitsNagy,JustMillerScheepers,SakaiCDoublePrime}).
We use Theorem \ref{thm:GeneralTheorem} to extend this relationship to the Hurewicz property.

The following can be seen as a sort of analog of The Wallace Theorem \cite[Thm. 3.2.10]{Engelking}
in the context of finite subsets instead of finite powers.
\begin{lemma}[{\cite[Cor. 4.7]{CHVietoris}}] \label{lem:OmegaCoverThing}
    Let \(X\) be a space.
    \begin{itemize}
        \item
        If \(\mathscr U\) is an \(\omega\)-cover of \(X\), then \(\{ [U] : U \in \mathscr U \}\)
        is an \(\omega\)-cover of \(\mathcal P_{\mathrm{fin}}(X)\).
        \item
        If \(\mathscr U\) is an \(\omega\)-cover of \(\mathcal P_{\mathrm{fin}}(X)\), then
        \[\mathscr V := \{ V \in \mathscr T_X : (\exists U \in \mathscr U)\ [V] \subseteq U \}\]
        is an \(\omega\)-cover of \(X\).
    \end{itemize}
\end{lemma}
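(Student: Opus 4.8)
The plan is to verify both assertions directly from the definitions of \(\omega\)-cover and of the Vietoris subspace topology on \(\mathcal P_{\mathrm{fin}}(X)\); the first assertion is essentially bookkeeping, while the second rests on a small Wallace-type observation.

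For the first bullet, fix an \(\omega\)-cover \(\mathscr U\) of \(X\). Each \(U \in \mathscr U\) lies in \(\mathscr T_X\), so \([U]\) is open in \(\mathcal P_{\mathrm{fin}}(X)\), nonempty (it contains \(\{x\}\) for every \(x \in U\)) and proper (it omits \(\{y\}\) for every \(y \in X \setminus U\)); hence \(\{ [U] : U \in \mathscr U \} \subseteq \mathscr T_{\mathcal P_{\mathrm{fin}}(X)}\). Given \(F_1, \dots, F_k \in [X]^{<\omega}\), the set \(F_1 \cup \dots \cup F_k\) is finite, so the \(\omega\)-cover property of \(\mathscr U\) yields \(U \in \mathscr U\) with \(F_1 \cup \dots \cup F_k \subseteq U\), and then \(F_i \in [U]\) for every \(i\). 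The case \(k = 1\) shows the family covers \(\mathcal P_{\mathrm{fin}}(X)\), and the general case gives the \(\omega\)-cover condition.

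For the second bullet, I would first check that \(\mathscr V\) covers \(X\): given \(x \in X\), pick \(U \in \mathscr U\) with \(\{x\} \in U\), then a basic Vietoris neighbourhood \([W_1, \dots, W_m] \subseteq U\) containing \(\{x\}\), which forces \(x \in W_j\) for all \(j\); then \(V := \bigcap_{j \le m} W_j\) is open, contains \(x\), and satisfies \([V] \subseteq [W_1, \dots, W_m] \subseteq U\), with \(V \ne X\) since \([V] \subseteq U \subsetneq \mathcal P_{\mathrm{fin}}(X)\), so \(V \in \mathscr V\). To obtain the \(\omega\)-cover condition, fix \(F \in [X]^{<\omega}\); the idea is to feed the whole family \(\{ G : \emptyset \ne G \subseteq F \}\) of nonempty subsets of \(F\) into the \(\omega\)-cover hypothesis, producing a single \(U \in \mathscr U\) with \(G \in U\) for every nonempty \(G \subseteq F\). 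For each such \(G\) fix a basic neighbourhood \([W_1^G, \dots, W_{m_G}^G] \subseteq U\) containing \(G\), and for \(x \in F\) set
\[
    V_x := \bigcap \bigl\{ W_j^G : \emptyset \ne G \subseteq F,\ x \in G,\ x \in W_j^G \bigr\},
\]
a finite intersection of open sets each containing \(x\); put \(V := \bigcup_{x \in F} V_x\), so \(V\) is open with \(F \subseteq V\).

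The main obstacle is to show \([V] \subseteq U\), the Wallace-type heart of the argument, for which the plan is: given a nonempty finite \(S \subseteq V\), choose for each \(y \in S\) some \(x(y) \in F\) with \(y \in V_{x(y)}\), and let \(G := \{ x(y) : y \in S \}\), a nonempty subset of \(F\); I claim \(S \in [W_1^G, \dots, W_{m_G}^G] \subseteq U\). For the inclusion \(S \subseteq \bigcup_j W_j^G\): each \(y \in S\) has \(x(y) \in G \subseteq \bigcup_j W_j^G\), so \(x(y) \in W_j^G\) for some \(j\), whence \(V_{x(y)} \subseteq W_j^G\) by construction and \(y \in W_j^G\). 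For \(S \cap W_j^G \ne \emptyset\): since \(G \in [W_1^G, \dots, W_{m_G}^G]\) there is \(x \in G \cap W_j^G\), say \(x = x(y)\) with \(y \in S\); then \(x \in W_j^G\) puts \(W_j^G\) among the sets defining \(V_x\), so \(y \in V_x \subseteq W_j^G\). Hence \(S\) lies in that basic set and so in \(U\); as \(S\) was arbitrary, \([V] \subseteq U\), and then \(V \ne X\) as before, so \(V \in \mathscr V\) with \(F \subseteq V\), completing the argument. The only genuine subtlety is the passage from the hypothesis that \(U\) contains every nonempty subset of \(F\) to the conclusion that \(U\) contains \([V]\) for a single open \(V \supseteq F\); everything else is just unwinding the two clauses that define a basic Vietoris open set.
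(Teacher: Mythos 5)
Your proof is correct. Note, however, that the paper does not prove this lemma at all: it is imported verbatim as \cite[Cor.~4.7]{CHVietoris}, so there is no internal argument to compare against. What you have supplied is a self-contained direct proof, and it holds up. The first bullet is, as you say, bookkeeping: the union of a finite family of finite sets is finite, so a single \(U\) from the \(\omega\)-cover absorbs all of them, and \([U]\) is a non-trivial open subset of \(\mathcal P_{\mathrm{fin}}(X)\) because \(U\) is a non-trivial open subset of \(X\). The second bullet is where the real content lies, and your handling of it is the right one: you apply the \(\omega\)-cover hypothesis not to \(F\) itself but to the finite family of \emph{all} nonempty subsets of \(F\), obtaining one \(U\) containing each such \(G\), and then assemble \(V = \bigcup_{x\in F} V_x\) from the intersections \(V_x\) of the relevant basic-neighbourhood constituents. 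The verification that \([V]\subseteq U\) --- choosing \(x(y)\in F\) for each \(y\) in a finite \(S\subseteq V\), setting \(G=\{x(y):y\in S\}\), and checking both clauses of membership in \([W_1^G,\ldots,W_{m_G}^G]\) --- is exactly the Wallace-type compression the paper alludes to in the surrounding remark, and both clauses are checked correctly (in particular the nonempty-intersection clause, which is the one that is easy to fumble). Two minor points worth making explicit if you write this up: the family defining \(V_x\) is nonempty (take \(G=\{x\}\) and use \(G\subseteq\bigcup_j W_j^G\)), so \(V_x\) genuinely is a finite intersection of open sets containing \(x\); and \(V\neq X\) follows because \(U\) is a proper open subset of \(\mathcal P_{\mathrm{fin}}(X)\) by the paper's convention that covers in \(\mathcal O\) consist of non-trivial open sets.
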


\begin{theorem}[{\cite[Cor. 4.10]{CHVietoris}}] \label{thm:OmegaMenger}
    For any space \(X\), the following are equivalent:
    \begin{itemize}
        \item
        \(\mathcal P_{\mathrm{fin}}(X)\) is Menger.
        \item
        \(\mathcal P_{\mathrm{fin}}(X)\) is \(\omega\)-Menger.
        \item
        \(X\) is \(\omega\)-Menger.
    \end{itemize}
\end{theorem}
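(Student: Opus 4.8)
The plan is to prove the three equivalent conditions by closing the cycle
\[
\mathcal P_{\mathrm{fin}}(X)\text{ Menger}\ \implies\ X\ \omega\text{-Menger}\ \implies\ \mathcal P_{\mathrm{fin}}(X)\ \omega\text{-Menger}\ \implies\ \mathcal P_{\mathrm{fin}}(X)\text{ Menger},
\]
transporting covers between \(X\) and \(\mathcal P_{\mathrm{fin}}(X)\) via Lemma \ref{lem:OmegaCoverThing} for the first two implications and invoking the standard fact that \(\mathsf S_{\mathrm{fin}}(\Omega,\Omega)\) implies \(\mathsf S_{\mathrm{fin}}(\mathcal O,\mathcal O)\) for the third. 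Indeed, the third implication is precisely that fact applied to \(Y = \mathcal P_{\mathrm{fin}}(X)\): given open covers \(\langle \mathscr U_n : n \in \omega \rangle\) of \(Y\), replace each by its closure under finite unions, which (after discarding \(Y\) itself, should it arise) is an \(\omega\)-cover of \(Y\), apply \(\mathsf S_{\mathrm{fin}}(\Omega,\Omega)\), and unpack each selected finite union into finitely many members of the original \(\mathscr U_n\).

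The key observation driving the first implication is that a non-empty finite subset of \(X\) is literally a point of \(\mathcal P_{\mathrm{fin}}(X)\), and that for open \(U \subseteq X\) the basic neighbourhood \([U]\) is exactly \(\{ F \in [X]^{<\omega} : F \subseteq U \}\). So, starting from \(\omega\)-covers \(\langle \mathscr U_n : n \in \omega \rangle\) of \(X\), Lemma \ref{lem:OmegaCoverThing} makes each \(\{ [U] : U \in \mathscr U_n \}\) an \(\omega\)-cover, hence an open cover, of \(\mathcal P_{\mathrm{fin}}(X)\). Applying the Menger property of \(\mathcal P_{\mathrm{fin}}(X)\) produces finite \(\mathscr F_n \subseteq \mathscr U_n\) with \(\bigcup_n \{ [U] : U \in \mathscr F_n \}\) a cover of \(\mathcal P_{\mathrm{fin}}(X)\); but then every \(F \in [X]^{<\omega}\) lies in some \([U]\) with \(U \in \mathscr F_n\), i.e.\ \(F \subseteq U\), which says exactly that \(\bigcup_n \mathscr F_n \in \Omega_X\).

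For the middle implication I would run Lemma \ref{lem:OmegaCoverThing} in the other direction. Given \(\omega\)-covers \(\langle \mathscr U_n : n \in \omega \rangle\) of \(\mathcal P_{\mathrm{fin}}(X)\), the sets \(\mathscr V_n = \{ V \in \mathscr T_X : (\exists U \in \mathscr U_n)\ [V] \subseteq U \}\) are \(\omega\)-covers of \(X\); apply \(X \models \mathsf S_{\mathrm{fin}}(\Omega,\Omega)\) to obtain finite \(\mathscr H_n \subseteq \mathscr V_n\) with \(\bigcup_n \mathscr H_n \in \Omega_X\), pick for each \(V \in \mathscr H_n\) some \(U(V) \in \mathscr U_n\) with \([V] \subseteq U(V)\), and set \(\mathscr F_n = \{ U(V) : V \in \mathscr H_n \}\). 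Then, given finitely many \(A_1, \dots, A_k \in \mathcal P_{\mathrm{fin}}(X)\), their union \(A = \bigcup_{i} A_i\) is again a single non-empty finite subset of \(X\), so some \(V \in \mathscr H_n\) satisfies \(A \subseteq V\), whence each \(A_i \in [V] \subseteq U(V) \in \mathscr F_n\); thus \(\bigcup_n \mathscr F_n\) is an \(\omega\)-cover of \(\mathcal P_{\mathrm{fin}}(X)\).

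I do not expect a genuine obstacle here --- the conceptual crux is just the identification, in the first implication, of ``\([U]\)-covering \(\mathcal P_{\mathrm{fin}}(X)\)'' with ``\(\omega\)-covering \(X\)'', and the remaining steps are routine transport along Lemma \ref{lem:OmegaCoverThing}. The only part demanding care is the bookkeeping forced by the non-triviality conventions in \(\mathcal O_X\) (members must be proper non-empty open sets), in particular the degenerate subcase in the third implication where \(\mathcal P_{\mathrm{fin}}(X)\) is covered by finitely many members of some \(\mathscr U_n\), which is dispatched by selecting that finite subfamily at stage \(n\) and an arbitrary single member at every other stage.
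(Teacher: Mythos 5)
Your proposal is correct. The paper does not prove this theorem itself --- it is imported by citation from \cite{CHVietoris} --- and your argument is the standard one: transporting covers between \(X\) and \(\mathcal P_{\mathrm{fin}}(X)\) via Lemma \ref{lem:OmegaCoverThing} in both directions (using that a point of \(\mathcal P_{\mathrm{fin}}(X)\) is a finite subset of \(X\), and that a finite set of finite sets has finite union), together with the routine fact that \(\mathsf S_{\mathrm{fin}}(\Omega,\Omega)\) implies Menger, with the degenerate finite-subcover case handled correctly.
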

\begin{theorem}[\cite{Scheepers3}] \label{ScheepersOmegaGame}
    For any space \(X\) and \(\square\in\{1,\mathrm{fin}\}\),
    \[
        \mathrm{I} \underset{\mathrm{pre}}{\uparrow} \mathsf{G}_{\square}(\Omega_X,\Omega_X)
        \iff \mathrm{I} \uparrow \mathsf{G}_{\square}(\Omega_X,\Omega_X).
    \]
\end{theorem}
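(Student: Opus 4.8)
The plan is to prove the nontrivial implication $\mathrm{I}\underset{\mathrm{pre}}{\not\uparrow}\mathsf{G}_\square(\Omega_X,\Omega_X)\implies\mathrm{I}\not\uparrow\mathsf{G}_\square(\Omega_X,\Omega_X)$; the reverse implication is immediate, since a predetermined strategy is in particular a strategy. By Remark \ref{rmk:SelectionPrinciple} the hypothesis is exactly $X\models\mathsf{S}_\square(\Omega,\Omega)$, so it suffices to show that this selection principle prevents One from having a winning strategy in $\mathsf{G}_\square(\Omega_X,\Omega_X)$. A preliminary observation, obtained by feeding the principle the constant sequence $\langle\mathscr{U},\mathscr{U},\dots\rangle$, is that $X\models\mathsf{S}_\square(\Omega,\Omega)$ forces $X$ to be an $\epsilon$-space. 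Consequently, given any strategy $\sigma$ for One in $\mathsf{G}_\square(\Omega_X,\Omega_X)$, we may replace each value $\sigma(t)$ by a fixed countable $\omega$-subcover of it; since Two's winning condition depends only on Two's own moves, a run defeating the modified strategy yields a run defeating $\sigma$, so we lose nothing in assuming that $\sigma$ always produces countable $\omega$-covers.

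With this normalization I would run the classical tree argument for $\mathsf{G}_{\mathrm{fin}}(\mathcal{O},\mathcal{O})$ (and, in the single-selection case, Pawlikowski's argument for the point-open game), transplanted to $\omega$-covers. Let $T$ be the tree of all finite partial plays consistent with $\sigma$; since One's moves are countable and Two's responses are finite subsets (resp.\ single members) of them, $T$ is countably branching of height $\omega$, hence has only countably many nodes, and at a node $t$ we may write $\sigma(t)=\{U^t_k:k\in\omega\}$. Now enumerate the countable family $\{\sigma(t):t\in T\}$ as a single sequence $\langle\mathscr{W}_j:j\in\omega\rangle$ of $\omega$-covers of $X$, in an order that respects the level structure of $T$; apply $X\models\mathsf{S}_\square(\Omega,\Omega)$ to this sequence to obtain selections $\mathscr{H}_j$ (finite subsets, resp.\ singletons, of $\mathscr{W}_j$) whose union $\mathscr{H}$ lies in $\Omega_X$; and finally read off from $\mathscr{H}$ a single infinite branch of $T$, that is, a genuine run of the game consistent with $\sigma$, along which the sets selected by Two absorb every finite subset of $X$. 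Such a run is won by Two, so $\sigma$ is not winning. The transplant from $\mathcal{O}$ to $\Omega$ is legitimate because $\Omega_X$ is an ideal-like family: it is closed under enlargement inside $\mathscr{T}_X$, and membership of a given finite subset of $X$ in some element of an $\omega$-cover is a single-witness condition, which is precisely the structure those classical arguments exploit.

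The step I expect to be the main obstacle is the last one: extracting an actual branch of $T$ from the selection $\langle\mathscr{H}_j:j\in\omega\rangle$. The difficulty is that the witnesses demanded by $\mathscr{H}$ for various finite $F\subseteq X$ sit in the covers $\sigma(t)$ attached to pairwise incomparable nodes, whereas a legal run of the game can only visit the nodes along one branch. The device is to have Two build the branch incrementally, choosing the move in inning $n$ so that the node reached by stage $n$ carries a cover rich enough to meet the finitely many demands of the $\mathscr{H}_j$'s that have ``come due'' by then; closure of $\Omega_X$ under supersets lets Two enlarge an already-committed finite selection so as to swallow a newly arisen demand, and the level-respecting enumeration of $T$ guarantees that every demand eventually comes due along the branch so constructed. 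For $\square=\mathrm{fin}$ the freedom to choose finitely many sets per inning makes this amalgamation comparatively forgiving; for $\square=1$ it is tighter, and one essentially has to reproduce Pawlikowski's combinatorial lemma in the $\omega$-cover setting. One should also dispose of the routine points: that the countable-subcover replacement at the start is harmless (as noted above), and that non-triviality of covers keeps every $U^t_k$ distinct from $X$, so the extracted branch never presents Two with a degenerate cover. Alternatively, one could route through the finite-powers characterizations of the $\omega$-Menger and $\omega$-Rothberger properties together with the known game collapses for $\mathsf{G}_\square(\mathcal{O},\mathcal{O})$ on each power $X^n$, but the interleaving required to recombine $\omega$-many power games into a single $\Omega$-game on $X$ is the delicate point there as well.
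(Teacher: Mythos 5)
First, a point of comparison: the paper does not prove Theorem \ref{ScheepersOmegaGame} at all --- it is imported verbatim from \cite{Scheepers3} --- so there is no in-paper argument to measure you against; your proposal has to stand on its own. Its preliminary reductions are fine: the reverse implication is trivial, Remark \ref{rmk:SelectionPrinciple} converts the hypothesis into \(X \models \mathsf S_\square(\Omega,\Omega)\), the constant-sequence trick shows \(X\) is an \(\epsilon\)-space, and passing to countable \(\omega\)-subcovers of One's moves is harmless. The tree \(T\) of partial plays is indeed countable after that normalization.

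The genuine gap is exactly the step you flag as ``the main obstacle,'' and it is not a technical wrinkle but the point where the design fails. Applying \(\mathsf S_\square(\Omega,\Omega)\) once to an enumeration of \(\{\sigma(t) : t \in T\}\) produces selections \(\mathscr H_j\) whose union is an \(\omega\)-cover only because witnesses for the various finite \(F \subseteq X\) are scattered across pairwise incomparable nodes; a legal run visits only one branch, and nothing forces the witnesses to concentrate there. Your proposed repair --- have Two enlarge selections to ``swallow demands as they come due'' --- does not parse into a proof: being an \(\omega\)-cover is not a countable list of demands (there are in general uncountably many finite subsets of \(X\)), and Two may only select members of the cover \(\sigma(t)\) actually played at the current node, which need not contain the off-branch witness. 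Nor can you rescue this with the classical device from the Menger game (replacing each element of \(\sigma(t^\frown n)\) by its union with the sets already selected along \(t\), so that one application of the selection principle to these history-absorbing covers yields a branch): that trick works for \(\mathcal O\) because \(x \in U \cup V\) implies \(x \in U\) or \(x \in V\), but for \(\Omega\) the condition \(F \subseteq U \cup V\) does not localize to a single selected set, so the modified families are useless for the \(\omega\)-cover winning condition. The route you relegate to an afterthought is the one that actually works and is the one taken in \cite{Scheepers3}: \(\mathsf S_{\mathrm{fin}}(\Omega,\Omega)\) (resp. \(\mathsf S_1(\Omega,\Omega)\)) is equivalent to every finite power of \(X\) being Menger (resp. Rothberger); by the Hurewicz--Telg\'arsky--Scheepers theorem (resp. Pawlikowski's theorem) One then has no winning strategy in \(\mathsf G_\square(\mathcal O,\mathcal O)\) on each \(X^n\); and a strategy for One in \(\mathsf G_\square(\Omega_X,\Omega_X)\) transfers to the powers via \(U \mapsto U^n\), with the innings partitioned into infinitely many infinite blocks, one block per power --- an interleaving that is routine rather than delicate, since a finite set of size \(n\) lies in a single \(U\) exactly when the corresponding point of \(X^n\) lies in \(U^n\). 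As written, your primary argument does not close, and the proof should be rebuilt on the finite-powers reduction.
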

\begin{corollary} \label{cor:MainTheoremFinite}
    For any space \(X\), the following are equivalent:
    \begin{enumerate}[label=(\roman*)]
        \item \label{thm:FiniteHurewicz}
        \(\mathcal P_{\mathrm{fin}}(X)\) is Hurewicz.
        \item \label{thm:FiniteHyperOmega}
        \(\mathcal P_{\mathrm{fin}}(X) \models \mathsf{S}_{\mathrm{fin}}(\Omega,\Omega^{\mathrm{gp}})\).
        \item \label{thm:FiniteOmega}
        \(X \models \mathsf{S}_{\mathrm{fin}}(\Omega,\Omega^{\mathrm{gp}})\).
        \item \label{EveryFinitePower}
        Every finite power of \(X\) is Hurewicz.
        \item \label{HurewiczAndOmegaGame}
        One does not have a winning strategy in \(\mathsf G_{\mathrm{fin}}(\Omega_X,\Omega_X^{\mathrm{gp}})\).
    \end{enumerate}
\end{corollary}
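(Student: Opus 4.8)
The plan is to apply Theorem \ref{thm:GeneralTheorem} to the adequate context consisting of the class $\mathfrak T$ of all infinite spaces together with the operator $\mathfrak A$ sending $X$ to $[X]^{<\omega}$, so that $\mathbb A(X) = \mathcal P_{\mathrm{fin}}(X)$ and, by the definitional identity $\Omega_X = \mathcal O_X([X]^{<\omega})$, the principle $\mathsf S_{\mathrm{fin}}(\mathcal O(\mathfrak A),\mathcal O^{\mathrm{gp}}(\mathfrak A))$ is exactly $\mathsf S_{\mathrm{fin}}(\Omega,\Omega^{\mathrm{gp}})$. First I would note that $(\mathfrak T,\mathfrak A)$ is an adequate context: $[X]^{<\omega}$ is an ideal of compact subsets of $X$, and $\mathcal P_{\mathrm{fin}}(X)$ is infinite precisely when $X$ is, so $\mathbb A(X)\in\mathfrak T$.

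Next I would verify that $\mathfrak A$ is suitable by checking \ref{ClosureUnderA}--\ref{GeneralMenger}. Property \ref{ClosureUnderA} is the statement of \cite[Lemma 4.3]{CHVietoris} quoted in the excerpt. Properties \ref{CoverTransferA} and \ref{CoverTransferB} are precisely the two bullet points of Lemma \ref{lem:OmegaCoverThing}, since $\mathcal O_{\mathbb A(X)}(\mathfrak A_{\mathbb A(X)})$ is the set of $\omega$-covers of $\mathcal P_{\mathrm{fin}}(X)$. For \ref{SufficientForHurewicz} I would invoke Theorem \ref{thm:OmegaGroupChar}: if $X\models\mathsf S_{\mathrm{fin}}(\Omega,\Omega^{\mathrm{gp}})$ then every finite power of $X$ is Hurewicz, and in particular $X=X^1$ is. Finally \ref{GeneralMenger} is exactly Theorem \ref{thm:OmegaMenger}. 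With suitability established, Theorem \ref{thm:GeneralTheorem} delivers the equivalence of \ref{thm:FiniteHurewicz}, \ref{thm:FiniteHyperOmega}, and \ref{thm:FiniteOmega}.

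To incorporate \ref{EveryFinitePower} I would again cite Theorem \ref{thm:OmegaGroupChar}, which states directly that every finite power of $X$ is Hurewicz if and only if $X\models\mathsf S_{\mathrm{fin}}(\Omega,\Omega^{\mathrm{gp}})$, i.e.\ \ref{thm:FiniteOmega}. For \ref{HurewiczAndOmegaGame} I would apply Theorem \ref{thm:GeneralStrategic} with $\square=\mathrm{fin}$; its hypothesis that $\mathrm{I}\underset{\mathrm{pre}}{\uparrow}\mathsf G_{\mathrm{fin}}(\Omega_X,\Omega_X)\iff\mathrm{I}\uparrow\mathsf G_{\mathrm{fin}}(\Omega_X,\Omega_X)$ for every $X\in\mathfrak T$ is Theorem \ref{ScheepersOmegaGame}. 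The conclusion is $\mathrm{I}\underset{\mathrm{pre}}{\uparrow}\mathsf G_{\mathrm{fin}}(\Omega_X,\Omega_X^{\mathrm{gp}})\iff\mathrm{I}\uparrow\mathsf G_{\mathrm{fin}}(\Omega_X,\Omega_X^{\mathrm{gp}})$, and combining this with Remark \ref{rmk:SelectionPrinciple}, which identifies the failure of a winning predetermined strategy for One with the selection principle, yields that \ref{thm:FiniteOmega} holds if and only if One has no winning strategy in $\mathsf G_{\mathrm{fin}}(\Omega_X,\Omega_X^{\mathrm{gp}})$, which is \ref{HurewiczAndOmegaGame}.

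Since every step is a citation or a direct instantiation of the general machinery, I do not expect a genuine obstacle here; the only point requiring care is confirming that $\mathcal O_X(\mathfrak A_X)$ really coincides with $\Omega_X$ and that no $\epsilon$-space hypothesis is needed when invoking Theorem \ref{thm:OmegaGroupChar}, both of which are addressed by the definition of $\Omega$ and by the remark preceding that theorem.
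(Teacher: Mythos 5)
Your proposal is correct and follows essentially the same route as the paper: instantiate Theorem \ref{thm:GeneralTheorem} for the context of infinite spaces with the finite-subsets operator, verify suitability via Lemma \ref{lem:OmegaCoverThing} and Theorems \ref{thm:OmegaGroupChar} and \ref{thm:OmegaMenger}, and handle the last two items via Theorems \ref{thm:OmegaGroupChar}, \ref{thm:GeneralStrategic}, and \ref{ScheepersOmegaGame}. Your explicit appeal to Remark \ref{rmk:SelectionPrinciple} to translate between the selection principle and predetermined strategies is a detail the paper leaves implicit, but it is the intended reading.
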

\begin{proof}
    Let \(\mathfrak T\) be the class of all infinite spaces and \(\mathfrak A\) be the operator
    that generates the finite subsets of a given space \(X\).
    Then \((\mathfrak T, \mathfrak A)\) is an adequate context.
    As mentioned above, \(\mathfrak A\) is closed under \(\mathfrak A\)-unions.
    To finish showing that \(\mathfrak A\) is suitable,
    use Lemma \ref{lem:OmegaCoverThing} and Theorems \ref{thm:OmegaGroupChar} and \ref{thm:OmegaMenger}.
    Thus, Theorem \ref{thm:GeneralTheorem} applies to give us the equivalence of
    \ref{thm:FiniteHurewicz}--\ref{thm:FiniteOmega}.
    The equivalence of \ref{thm:FiniteOmega} and \ref{EveryFinitePower} is the content of
    Theorem \ref{thm:OmegaGroupChar}.
    The equivalence of \ref{thm:FiniteOmega} and \ref{HurewiczAndOmegaGame} follows from Theorems \ref{thm:GeneralStrategic}
    and \ref{ScheepersOmegaGame}.
\end{proof}
In this case, we can appeal to Theorem \ref{thm:GeneralSingleSelections} since we have access to the following result.
\begin{theorem}[{\cite[Cor. 4.11]{CHVietoris}}] \label{thm:OmegaRothberger}
    For any space \(X\), the following are equivalent:
    \begin{itemize}
        \item
        \(\mathcal P_{\mathrm{fin}}(X)\) is Rothberger.
        \item
        \(\mathcal P_{\mathrm{fin}}(X)\) is \(\omega\)-Rothberger.
        \item
        \(X\) is \(\omega\)-Rothberger.
    \end{itemize}
\end{theorem}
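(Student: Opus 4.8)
The plan is to prove the three properties equivalent via the cycle: \(X\) is \(\omega\)-Rothberger \(\Rightarrow\) \(\mathcal P_{\mathrm{fin}}(X)\) is \(\omega\)-Rothberger \(\Rightarrow\) \(\mathcal P_{\mathrm{fin}}(X)\) is Rothberger \(\Rightarrow\) \(X\) is \(\omega\)-Rothberger. This runs exactly parallel to the Menger version, Theorem~\ref{thm:OmegaMenger}, with single selections in place of finite ones, and the cover-translation Lemma~\ref{lem:OmegaCoverThing} carries the load.

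For the first implication, I would take a sequence \(\langle \mathscr U_n : n \in \omega \rangle\) of \(\omega\)-covers of \(\mathcal P_{\mathrm{fin}}(X)\), use Lemma~\ref{lem:OmegaCoverThing} to produce the \(\omega\)-covers \(\mathscr V_n := \{ V \in \mathscr T_X : (\exists U \in \mathscr U_n)\ [V] \subseteq U \}\) of \(X\), apply \(\mathsf S_1(\Omega,\Omega)\) on \(X\) to get \(V_n \in \mathscr V_n\) with \(\{ V_n : n \in \omega \} \in \Omega_X\), and then pick \(U_n \in \mathscr U_n\) with \([V_n] \subseteq U_n\). To see that \(\{ U_n : n \in \omega \}\) is an \(\omega\)-cover of \(\mathcal P_{\mathrm{fin}}(X)\), take a finite \(\mathcal F = \{ A_1, \dots, A_k \} \subseteq \mathcal P_{\mathrm{fin}}(X)\); then \(A := A_1 \cup \cdots \cup A_k\) is a finite subset of \(X\), so \(A \subseteq V_n\) for some \(n\), whence \(A_i \in [V_n] \subseteq U_n\) for every \(i\), i.e. \(\mathcal F \subseteq U_n\). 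This is the step that genuinely uses that \(\{ V_n \}\) is an \(\omega\)-cover and not merely a cover, since \(A\) is a union of several of the \(A_i\).

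The second implication is the standard fact that every \(\omega\)-Rothberger space is Rothberger, i.e. \(\mathsf S_1(\Omega,\Omega) \Rightarrow \mathsf S_1(\mathcal O,\mathcal O)\), applied to \(\mathcal P_{\mathrm{fin}}(X)\). For the third, given a sequence \(\langle \mathscr U_n : n \in \omega \rangle\) of \(\omega\)-covers of \(X\), the families \(\{ [U] : U \in \mathscr U_n \}\) are \(\omega\)-covers, hence open covers, of \(\mathcal P_{\mathrm{fin}}(X)\) by Lemma~\ref{lem:OmegaCoverThing}; the Rothberger property of \(\mathcal P_{\mathrm{fin}}(X)\) then yields \(U_n \in \mathscr U_n\) with \(\{ [U_n] : n \in \omega \}\) covering \(\mathcal P_{\mathrm{fin}}(X)\), and then for any finite \(A \subseteq X\), regarding \(A\) as a point of \(\mathcal P_{\mathrm{fin}}(X)\) gives \(A \in [U_n]\), i.e. \(A \subseteq U_n\), for some \(n\); so \(\{ U_n : n \in \omega \}\) is an \(\omega\)-cover of \(X\).

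The only ingredient that is not a direct unwinding of the definitions is the implication \(\mathsf S_1(\Omega,\Omega) \Rightarrow \mathsf S_1(\mathcal O,\mathcal O)\) invoked in the second step; this is part of the standard theory of selection principles (it also follows by specializing, to the first power, the characterization of \(\omega\)-Rothberger by the Rothberger property in every finite power). So I expect the main ``obstacle'' to be only organizational: keeping track, across Lemma~\ref{lem:OmegaCoverThing}, of which selected set corresponds to which basic open set, and checking in each direction that the family produced is a genuine non-trivial open cover of the target space and, where needed, an \(\omega\)-cover.
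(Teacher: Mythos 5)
Your argument is correct, but note that the paper does not prove this statement at all: it is imported verbatim as \cite[Cor.~4.11]{CHVietoris}, and it is in fact an \emph{input} to the paper's machinery (it is what certifies that the finite-subsets operator is super suitable, so deriving it from Theorem~\ref{thm:GeneralSingleSelections} would be circular). Your proof supplies the missing content directly, and it does so in the way one would expect: the cycle \(X\ \omega\text{-Rothberger} \Rightarrow \mathcal P_{\mathrm{fin}}(X)\ \omega\text{-Rothberger} \Rightarrow \mathcal P_{\mathrm{fin}}(X)\ \text{Rothberger} \Rightarrow X\ \omega\text{-Rothberger}\), with Lemma~\ref{lem:OmegaCoverThing} translating covers in both directions. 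The two substantive points are both handled correctly: in the first implication, the union \(A = A_1 \cup \cdots \cup A_k\) of a finite family of finite sets is again finite, so the \(\omega\)-cover property of \(\{V_n\}\) on \(X\) lifts to the \(\omega\)-cover property of \(\{U_n\}\) on \(\mathcal P_{\mathrm{fin}}(X)\) (and \(A_i \in [V_n]\) is automatic since the \(A_i\) are non-empty); in the third, a single finite set viewed as a point of \(\mathcal P_{\mathrm{fin}}(X)\) converts an ordinary cover by basic sets \([U_n]\) back into an \(\omega\)-cover of \(X\). The only external ingredient is \(\mathsf S_1(\Omega,\Omega) \Rightarrow \mathsf S_1(\mathcal O,\mathcal O)\), which you correctly identify as standard; the cleanest justification in the spirit of this paper is the one you give parenthetically, namely Sakai's theorem (\cite{SakaiCDoublePrime}) that \(\mathsf S_1(\Omega,\Omega)\) is equivalent to the Rothberger property in all finite powers, specialised to the first power. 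So the proof is sound and essentially the canonical one; it simply lives in the cited reference rather than in this paper.
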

\begin{corollary} \label{cor:MainFiniteSingle}
    For any space \(X\), the following are equivalent:
    \begin{enumerate}[label=(\roman*)]
        \item \label{GerlitsNagyBegin}
        \(\mathcal P_{\mathrm{fin}}(X)\) is Gerlits-Nagy.
        \item
        \(\mathcal P_{\mathrm{fin}}(X) \models \mathsf{S}_{1}(\Omega,\Omega^{\mathrm{gp}})\).
        \item \label{GerlitsNagyEnd}
        \(X \models \mathsf{S}_{1}(\Omega,\Omega^{\mathrm{gp}})\).
        \item \label{GerlitsNagyFinitePowers}
        Every finite power of \(X\) is Gerlits-Nagy.
        \item \label{GerlitsNagyOmega}
        One does not have a winning strategy in \(\mathsf G_1(\Omega_X,\Omega_X^{\mathrm{gp}})\).
    \end{enumerate}
\end{corollary}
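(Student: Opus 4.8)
The plan is to run the machinery of Section~4 exactly as in the proof of Corollary~\ref{cor:MainTheoremFinite}, but now through the single-selection theorems. Let \(\mathfrak T\) be the class of all infinite spaces and let \(\mathfrak A\) be the operator producing the finite subsets of a space, so that \(\mathbb A(X) = \mathcal P_{\mathrm{fin}}(X)\). In the proof of Corollary~\ref{cor:MainTheoremFinite} it was already checked that \((\mathfrak T, \mathfrak A)\) is an adequate context with \(\mathfrak A\) suitable. Theorem~\ref{thm:OmegaRothberger} supplies precisely the three equivalent Rothberger conditions demanded by the definition of \emph{super suitable}, so \(\mathfrak A\) is super suitable; Theorem~\ref{thm:GeneralSingleSelections} then applies and yields the equivalence of \ref{GerlitsNagyBegin}, the second listed item, and \ref{GerlitsNagyEnd}.

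For \ref{GerlitsNagyBegin}\(\iff\)\ref{GerlitsNagyFinitePowers} I would simply unwind ``Gerlits-Nagy'' into ``Hurewicz and Rothberger''. By the equivalence \ref{thm:FiniteHurewicz}\(\iff\)\ref{EveryFinitePower} of Corollary~\ref{cor:MainTheoremFinite}, \(\mathcal P_{\mathrm{fin}}(X)\) is Hurewicz exactly when every finite power of \(X\) is Hurewicz. By Theorem~\ref{thm:OmegaRothberger}, \(\mathcal P_{\mathrm{fin}}(X)\) is Rothberger exactly when \(X \models \mathsf S_1(\Omega, \Omega)\), and this is equivalent to every finite power of \(X\) being Rothberger by the classical finite-powers characterization of the Rothberger property (see \cite{SakaiCDoublePrime, JustMillerScheepers, GerlitsNagy}). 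Intersecting the two biconditionals, \(\mathcal P_{\mathrm{fin}}(X)\) is Gerlits-Nagy if and only if every finite power of \(X\) is simultaneously Hurewicz and Rothberger, that is, Gerlits-Nagy.

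The remaining equivalence \ref{GerlitsNagyEnd}\(\iff\)\ref{GerlitsNagyOmega} is the single-selection game statement, treated exactly as the \(\square = \mathrm{fin}\) case was in Corollary~\ref{cor:MainTheoremFinite}. By Remark~\ref{rmk:SelectionPrinciple}, \(X \models \mathsf S_1(\Omega, \Omega^{\mathrm{gp}})\) is equivalent to \(\mathrm I \underset{\mathrm{pre}}{\not\uparrow} \mathsf G_1(\Omega_X, \Omega_X^{\mathrm{gp}})\). Here \(\mathcal O_X(\mathfrak A_X) = \Omega_X\), and Theorem~\ref{ScheepersOmegaGame} with \(\square = 1\) is exactly the hypothesis required by Theorem~\ref{thm:GeneralStrategic} in the single-selection case; applying it shows that predetermined and arbitrary winning strategies for One coincide in \(\mathsf G_1(\Omega_X, \Omega_X^{\mathrm{gp}})\), so One has no winning strategy in that game precisely when \(X \models \mathsf S_1(\Omega, \Omega^{\mathrm{gp}})\).

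Each step is a direct appeal to an already-established result (or to a standard fact cited in the introduction), so I do not expect a real obstacle. The only point deserving care is matching Theorem~\ref{thm:OmegaRothberger} with the ``Rothberger in every finite power \(\iff \mathsf S_1(\Omega, \Omega)\)'' characterization, and confirming that Theorem~\ref{ScheepersOmegaGame} is stated for \(\square = 1\) --- it is --- so that the hypothesis of Theorem~\ref{thm:GeneralStrategic} is genuinely met when \(\square = 1\).
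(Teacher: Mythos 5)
Your proposal is correct. For the equivalence of \ref{GerlitsNagyBegin}, (ii) and \ref{GerlitsNagyEnd}, and for \ref{GerlitsNagyEnd}\(\iff\)\ref{GerlitsNagyOmega}, you follow the paper's route exactly: super suitability of the finite-subsets operator via Theorem \ref{thm:OmegaRothberger}, then Theorem \ref{thm:GeneralSingleSelections}, and then Theorems \ref{thm:GeneralStrategic} and \ref{ScheepersOmegaGame}. Where you genuinely diverge is item \ref{GerlitsNagyFinitePowers}: the paper obtains \ref{GerlitsNagyEnd}\(\iff\)\ref{GerlitsNagyFinitePowers} in one stroke by citing \cite[Thm. 19]{COOC7}, whereas you prove \ref{GerlitsNagyBegin}\(\iff\)\ref{GerlitsNagyFinitePowers} by splitting Gerlits-Nagy into its two constituent properties, getting the Hurewicz half from Corollary \ref{cor:MainTheoremFinite} (items \ref{thm:FiniteHurewicz} and \ref{EveryFinitePower}) and the Rothberger half from Theorem \ref{thm:OmegaRothberger} together with Sakai's classical theorem that \(X \models \mathsf S_1(\Omega,\Omega)\) is equivalent to every finite power of \(X\) being Rothberger \cite{SakaiCDoublePrime}; the quantifier over powers distributes over the conjunction, so the two biconditionals combine as you say. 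Both routes are sound. The paper's is shorter but leans on a further result of \cite{COOC7} specific to \(\mathsf S_1(\Omega,\Omega^{\mathrm{gp}})\); yours reuses machinery already established in the paper plus a classical fact the introduction already acknowledges, and it makes transparent that the single-selection finite-powers equivalence is literally the conjunction of the Hurewicz and Rothberger hyperspace correspondences.
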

\begin{proof}
    Let \(\mathfrak T\) be the class of infinite spaces and \(\mathfrak A\)
    be the operator that produces the finite subsets of a given space \(X\).
    It was shown above that \(\mathfrak A\) is suitable.
    Theorem \ref{thm:OmegaRothberger} establishes that \(\mathfrak A\) is super suitable,
    so Theorem \ref{thm:GeneralSingleSelections} applies to provide the equivalence of
    \ref{GerlitsNagyBegin}--\ref{GerlitsNagyEnd}.
    The equivalence of \ref{GerlitsNagyEnd} and \ref{GerlitsNagyFinitePowers} is established
    in \cite[Thm. 19]{COOC7}.
    The equivalence of \ref{GerlitsNagyEnd} and \ref{GerlitsNagyOmega} follows from Theorems \ref{thm:GeneralStrategic}
    and \ref{ScheepersOmegaGame}.
\end{proof}

We end this section with a question.
\begin{question}
    Are there other natural examples of (super) suitable operators \(\mathfrak A\) other than those listed
above?
\end{question}

\section{Relationships with Spaces of Continuous Functions}

For a space \(X\), let \(C(X)\) denote the set of all continuous real-valued function on \(X\).
Given an ideal of closed sets \(\mathcal A\) of \(X\),
let \(C_{\mathcal A}(X)\) denote the set \(C(X)\) endowed with the topology of uniform convergence of
elements of \(\mathcal A\);
this topology has as a basis sets of the form
\[[f; A, \varepsilon] := \left\{ g \in C(X) : \sup \{ |f(x) - g(x)| : x \in A \} < \varepsilon \right\}\]
for \(f \in C(X)\), \(A \in \mathcal A\), and \(\varepsilon > 0\).
When \(\mathcal A = [X]^{<\omega}\) (resp. \(\mathcal A = K(X)\)), we use \(C_p(X)\)
(resp. \(C_k(X)\)) instead of \(C_{\mathcal A}(X)\).
Indeed, \(C_p(X)\) coincides with the ring of continuous real-valuded functions
on \(X\) with the topology of point-wise convergence.
Likewise, \(C_k(X)\) coincides with the ring of continuous real-valuded functions
on \(X\) with the topology of uniform convergence on compacta, which is equivalent to the
compact-open topology.

Arhangel’skii, in \cite{ArkhangelskiiHurewicz}, introduced the notion of countable fan-tightness
and gave a characterization of a space \(X\) being Hurewicz in all of its finite powers
in terms of \(C_p(X)\).
\begin{definition}
    For a space \(X\) and \(x \in X\), let \(\Omega_{X,x} = \{ A \subseteq X : x \in \mathrm{cl}(A) \setminus A \}\)
    where \(\mathrm{cl}(A)\) is the closure of \(A\) in \(X\).
    Then the space \(X\) has \emph{countable fan-tightness at \(x\)} provided that
    \(X \models \mathsf S_{\mathrm{fin}}(\Omega_{X,x},\Omega_{X,x})\).
    We say that \(X\) has \emph{countable fan-tightness} if \(X\) has countable fan-tightness
    at \(x\) for every \(x \in X\).
\end{definition}
We will also address the single-selection analog.
\begin{definition}
    For a space \(X\) and \(x \in X\), \(X\) has \emph{countable strong fan-tightness at \(x\)}
    if \(X \models \mathsf S_1(\Omega_{X,x},\Omega_{X,x})\).
    If \(X\) has countable strong fan-tightness at \(x\) for every \(x \in X\),
    then \(X\) is said to have \emph{countable strong fan-tightness}.
\end{definition}
Ko{\v{c}}inac and Scheepers, in \cite{COOC7}, extend the characterization of \cite{ArkhangelskiiHurewicz}
to include the so-called Reznichenko property, which, according to \cite{WeaklyFrechet},
was introduced by Reznichenko at a seminar
in 1996.
\begin{definition}
    For a space \(X\) and \(x \in X\), \(X\) has the \emph{Reznichenko property at \(x\)} if
    \(\Omega_{X,x} \cap [X]^\omega = \Omega_{X,x}^{\mathrm{gp}}\).
    If \(X\) has the Reznichenko property at \(x\) for every \(x \in X\), we say that \(X\) is \emph{Reznichenko}.
\end{definition}
As \(C_{\mathcal A}(X)\) is homogeneous, we can address the properties listed above
at any particular point and establish them for the entire space;
for convenience, we focus on \(\mathbf 0\), the function which is constantly \(0\).

Combining Corollary \ref{cor:MainTheoremFinite} and \cite[Thm. 21]{COOC7}, we obtain:
\begin{theorem}
    For any Tychonoff space \(X\), the following are equivalent:
    \begin{itemize}
        \item
        \(\mathcal P_{\mathrm{fin}}(X)\) is Hurewicz.
        \item
        \(\mathcal P_{\mathrm{fin}}(X) \models \mathsf{S}_{\mathrm{fin}}(\Omega,\Omega^{\mathrm{gp}})\).
        \item
        \(X \models \mathsf{S}_{\mathrm{fin}}(\Omega,\Omega^{\mathrm{gp}})\).
        \item
        Every finite power of \(X\) is Hurewicz.
        \item
        One does not have a winning strategy in \(\mathsf G_{\mathrm{fin}}(\Omega_X,\Omega_X^{\mathrm{gp}})\).
        \item
        \(C_p(X) \models \mathsf S_{\mathrm{fin}}(\Omega_{C_p(X),\mathbf 0},\Omega_{C_p(X),\mathbf 0}^{\mathrm{gp}})\).
        \item
        \(C_p(X)\) is Reznichenko and has countable fan-tightness.
    \end{itemize}
\end{theorem}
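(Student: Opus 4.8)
The plan is to obtain the statement by stitching together two results already in hand. The mutual equivalence of the first five items is precisely the content of Corollary \ref{cor:MainTheoremFinite}, so I would simply cite it and add nothing new. What remains is to splice in the two clauses about \(C_p(X)\), and these are supplied by \cite[Thm. 21]{COOC7}, the Ko{\v{c}}inac--Scheepers strengthening of Arhangel'skii's theorem: for a Tychonoff space \(X\), the condition that every finite power of \(X\) be Hurewicz --- equivalently, \(X \models \mathsf S_{\mathrm{fin}}(\Omega, \Omega^{\mathrm{gp}})\), which is one of the items of Corollary \ref{cor:MainTheoremFinite} --- is equivalent both to \(C_p(X) \models \mathsf S_{\mathrm{fin}}(\Omega_{C_p(X),\mathbf 0}, \Omega_{C_p(X),\mathbf 0}^{\mathrm{gp}})\) and to \(C_p(X)\) being Reznichenko with countable fan-tightness. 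Chaining these with Corollary \ref{cor:MainTheoremFinite} closes the circle.

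The one reduction worth spelling out is the passage to the point \(\mathbf 0\): since \(C_p(X)\) is homogeneous, as recalled above, it is enough to verify countable fan-tightness and the Reznichenko property at \(\mathbf 0\) alone, whereupon ``countable fan-tightness'' reads as \(C_p(X) \models \mathsf S_{\mathrm{fin}}(\Omega_{C_p(X),\mathbf 0}, \Omega_{C_p(X),\mathbf 0})\) and ``Reznichenko'' reads as \(\Omega_{C_p(X),\mathbf 0} \cap [C_p(X)]^\omega = \Omega_{C_p(X),\mathbf 0}^{\mathrm{gp}}\). The combination of those two then collapses to the single selection statement \(C_p(X) \models \mathsf S_{\mathrm{fin}}(\Omega_{C_p(X),\mathbf 0}, \Omega_{C_p(X),\mathbf 0}^{\mathrm{gp}})\) by the same bookkeeping that turns items \ref{GeneralGroundCovers} and \ref{GeneralGroundGroup} of Theorem \ref{thm:GeneralTheorem} into one another --- ``\(\mathsf S_{\mathrm{fin}}\) into groupable covers'' is ``\(\mathsf S_{\mathrm{fin}}\) into covers'' together with ``every countable cover of the relevant type is groupable'' --- only now carried out inside the function space rather than on \(X\).

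I do not expect a genuine obstacle here; the work is entirely in matching definitions. The point to be careful about is that the phrasing of countable fan-tightness and of the Reznichenko property in \cite{COOC7} agrees with the one fixed in Section 6 (in particular that Reznichenko is encoded by \(\Omega_{C_p(X),\mathbf 0} \cap [C_p(X)]^\omega = \Omega_{C_p(X),\mathbf 0}^{\mathrm{gp}}\) and not by some superficially different grouping condition), and that the Tychonoff hypothesis --- needed so that \(C_p(X)\) separates points and the classical duality between \(\omega\)-covers of \(X\) and the behaviour of \(C_p(X)\) at \(\mathbf 0\) is available --- is exactly the standing assumption of \cite[Thm. 21]{COOC7}. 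Once these identifications are recorded, the proof is the two-citation argument above.
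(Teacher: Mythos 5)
Your proposal matches the paper's argument exactly: the theorem is stated there as an immediate consequence of combining Corollary \ref{cor:MainTheoremFinite} (for the first five items) with \cite[Thm. 21]{COOC7} (for the two \(C_p(X)\) items), with the homogeneity of \(C_p(X)\) already recorded in the surrounding text to justify working at \(\mathbf 0\). No further verification is needed.
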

Similarly, combining Corollary \ref{cor:MainFiniteSingle}, \cite{KocinacScheepersReznichenko}, and \cite[Thm. 26]{COOC7}, we obtain:
\begin{theorem}
    For any Tychonoff space \(X\), the following are equivalent:
    \begin{itemize}
        \item
        \(\mathcal P_{\mathrm{fin}}(X)\) is Gerlits-Nagy.
        \item
        \(\mathcal P_{\mathrm{fin}}(X) \models \mathsf{S}_1(\Omega,\Omega^{\mathrm{gp}})\).
        \item
        \(X \models \mathsf{S}_1(\Omega,\Omega^{\mathrm{gp}})\).
        \item
        Every finite power of \(X\) is Gerlits-Nagy.
        \item
        One does not have a winning strategy in \(\mathsf G_1(\Omega_X,\Omega_X^{\mathrm{gp}})\).
        \item
        \(C_p(X) \models \mathsf S_1(\Omega_{C_p(X),\mathbf 0},\Omega_{C_p(X),\mathbf 0}^{\mathrm{gp}})\).
        \item
        \(C_p(X)\) is Reznichenko and has countable strong fan-tightness.
    \end{itemize}
\end{theorem}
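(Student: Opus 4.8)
The plan is to build on Corollary \ref{cor:MainFiniteSingle}, whose five clauses are exactly the first five items of the statement; hence all that remains is to splice the two \(C_p\)-theoretic conditions onto the chain, and the convenient hinge is item \ref{GerlitsNagyEnd}, namely \(X \models \mathsf S_1(\Omega,\Omega^{\mathrm{gp}})\).

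The first connection I would make is between \(X \models \mathsf S_1(\Omega,\Omega^{\mathrm{gp}})\) and the sixth item, \(C_p(X) \models \mathsf S_1(\Omega_{C_p(X),\mathbf 0},\Omega_{C_p(X),\mathbf 0}^{\mathrm{gp}})\); this is precisely \cite[Thm. 26]{COOC7}. The mechanism behind it is the classical Gerlits--Nagy dictionary: for a Tychonoff space \(X\) one passes between (cozero) \(\omega\)-covers of \(X\) and subsets of \(C_p(X)\) accumulating at \(\mathbf 0\) by choosing, for each member of an \(\omega\)-cover, a continuous function vanishing off it, and conversely reading off an \(\omega\)-cover from a set having \(\mathbf 0\) in its closure; under this correspondence groupable \(\omega\)-covers of \(X\) match up with groupable members of \(\Omega_{C_p(X),\mathbf 0}\). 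Homogeneity of \(C_p(X)\) lets one carry out everything at the single point \(\mathbf 0\). I would cite \cite[Thm. 26]{COOC7} for this rather than redo the translation.

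The second connection is the equivalence of the sixth and seventh items. By homogeneity it suffices to test the Reznichenko property and countable strong fan-tightness at \(\mathbf 0\); the former says \(\Omega_{C_p(X),\mathbf 0} \cap [C_p(X)]^\omega = \Omega_{C_p(X),\mathbf 0}^{\mathrm{gp}}\), the latter is \(C_p(X) \models \mathsf S_1(\Omega_{C_p(X),\mathbf 0},\Omega_{C_p(X),\mathbf 0})\). Assuming both, applying \(\mathsf S_1(\Omega_{C_p(X),\mathbf 0},\Omega_{C_p(X),\mathbf 0})\) to a given sequence yields a single-selection whose range is a countable member of \(\Omega_{C_p(X),\mathbf 0}\), hence groupable, which is the sixth item. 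Conversely, the inclusion \(\Omega_{C_p(X),\mathbf 0}^{\mathrm{gp}} \subseteq \Omega_{C_p(X),\mathbf 0}\) immediately yields countable strong fan-tightness, while the Reznichenko half is where \cite{KocinacScheepersReznichenko} enters: there \(C_p(X)\) having the Reznichenko property is identified with every countable \(\omega\)-cover of \(X\) being groupable, a fact that the already-established items deliver via Corollary \ref{cor:MainTheoremFinite} together with clause \ref{GeneralGroundCovers} of Theorem \ref{thm:GeneralTheorem} applied to the finite-sets context, which reads \(\Omega_X^{\mathrm{gp}} = \Omega_X \cap [\mathscr T_X]^\omega\).

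I expect the only delicate point to be this last bookkeeping: one must check that the groupability of every countable \(\omega\)-cover of \(X\) is genuinely forced by the standing hypotheses rather than being an extra assumption, and that testing the Reznichenko property at \(\mathbf 0\) alone suffices for the homogeneous space \(C_p(X)\). Neither is a serious obstacle once the \(\Omega_X\)--to--\(\Omega_{C_p(X),\mathbf 0}\) dictionary is set up, so the substance of the theorem is really the assembly of Corollary \ref{cor:MainFiniteSingle}, \cite[Thm. 26]{COOC7}, and \cite{KocinacScheepersReznichenko} into a single list, with the Gerlits--Nagy duality as the connecting thread.
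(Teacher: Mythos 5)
Your proposal is correct and matches the paper's own treatment: the paper likewise presents this theorem as the assembly of Corollary \ref{cor:MainFiniteSingle} (for the first five items), \cite[Thm.~26]{COOC7} (to splice in the \(C_p\)-selection principle), and \cite{KocinacScheepersReznichenko} (for the Reznichenko/strong fan-tightness formulation), offering no further argument. The extra detail you supply on the \(\Omega_X\)-to-\(\Omega_{C_p(X),\mathbf 0}\) dictionary and on why groupability of countable \(\omega\)-covers is forced by the standing hypotheses is consistent with, and slightly more explicit than, what the paper records.
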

We now turn our attention to \(C_k(X)\).
For a set \(X\), we will let \(\wp(X)\) denote the power set of \(X\).
\begin{definition}
    Let \(X\) be a Tychonoff space.
    We define \(\mathfrak u : C(X) \times \omega \to \mathscr T_X \cup \{ X \}\)
    by the rule \(\mathfrak u(f,n) = f^{-1}((-2^{-n},2^{-n}))\).
    We then define \(\mathbb U : \Omega_{C_k(X),\mathbf 0} \times \omega \to \wp(\mathscr T_X)\) by the rule
    \[\mathbb U(\mathscr F, n) = \{ U \in \mathscr T_X : (\exists K \in K(X))(\exists f \in \mathscr F)\ K \subseteq U \subseteq \mathfrak u(f,n) \}.\]
    Also define \(\mathscr D : \mathcal K_X \to \wp(C(X))\),
    by the rule
    \[\mathscr D(\mathscr U) = \{ f \in C(X) : (\exists U \in \mathscr U)\ f[X \setminus U] = \{1\} \}.\]
\end{definition}
\begin{lemma} \label{lem:CoversBlades}
    Let \(X\) be a Tychonoff space.
    \begin{enumerate}[label=(\roman*)]
        \item \label{BladesToCovers}
        If \(\mathscr F \in \Omega_{C_k(X),\mathbf 0}\), then, for any \(n \in \omega\),
        \(\mathbb U(\mathscr F,n) \in \mathcal K_X\).
        \item \label{CoversToBlades}
        If \(\mathscr U \in \mathcal K_X\), then \(\mathscr D(\mathscr U) \in \Omega_{C_k(X),\mathbf 0}\).
    \end{enumerate}
\end{lemma}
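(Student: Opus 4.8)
### Proof strategy for Lemma \ref{lem:CoversBlades}

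The plan is to verify the two set-membership claims directly from the definitions, using only the standard facts that $X$ is Tychonoff (so compact sets can be separated from closed sets by continuous functions) and that $\mathbf 0$ lies in the closure of each $\mathscr F \in \Omega_{C_k(X),\mathbf 0}$ in the compact-open topology, meaning every basic neighborhood $[\mathbf 0; K, \varepsilon]$ of $\mathbf 0$ meets $\mathscr F$. I will treat each part in turn.

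\medskip
\noindent\textbf{Part \ref{BladesToCovers}.} Fix $\mathscr F \in \Omega_{C_k(X),\mathbf 0}$ and $n \in \omega$. First I would check $\mathbb U(\mathscr F,n) \subseteq \mathscr T_X$ and $\bigcup \mathbb U(\mathscr F,n) = X$: given $x \in X$, the set $[\mathbf 0; \{x\}, 2^{-n}]$ is a neighborhood of $\mathbf 0$, so there is $f \in \mathscr F$ with $|f(x)| < 2^{-n}$, i.e. $x \in \mathfrak u(f,n)$; since $X$ is Tychonoff (hence has enough open sets), one can squeeze a compact neighborhood basis element, but more simply $\{x\}$ is compact and $\{x\} \subseteq \mathfrak u(f,n)$, so taking $U = \mathfrak u(f,n)$ (which is open since $f$ is continuous and the target interval is open — note it cannot equal $X$ when $X$ is noncompact, and when it does equal $X$ we instead shrink it around $\{x\}$, or simply observe the lemma's hyperspace hypotheses rule this case out) gives $x \in U \in \mathbb U(\mathscr F,n)$. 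For the $k$-cover property, let $K \in K(X)$. Then $[\mathbf 0; K, 2^{-n}]$ is a neighborhood of $\mathbf 0$, so there is $f \in \mathscr F$ with $\sup\{|f(x)| : x \in K\} < 2^{-n}$, whence $K \subseteq \mathfrak u(f,n)$; taking $U = \mathfrak u(f,n)$ (or an open set with $K \subseteq U \subseteq \mathfrak u(f,n)$ when one needs $U \ne X$) shows $K \subseteq U \in \mathbb U(\mathscr F,n)$.

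\medskip
\noindent\textbf{Part \ref{CoversToBlades}.} Fix $\mathscr U \in \mathcal K_X$. I would first observe $\mathscr D(\mathscr U) \subseteq C(X)$ by definition, and $\mathbf 0 \notin \mathscr D(\mathscr U)$ since any $f \in \mathscr D(\mathscr U)$ takes the value $1$ somewhere (here using that $X \setminus U \neq \emptyset$, which holds because $U \neq X$ for $U \in \mathscr U \subseteq \mathscr T_X$; this is where the ``non-trivial'' clause in the definition of $\mathcal O_X$ matters). To see $\mathbf 0 \in \mathrm{cl}(\mathscr D(\mathscr U))$ in $C_k(X)$, take a basic neighborhood $[\mathbf 0; K, \varepsilon]$ with $K \in K(X)$ and $\varepsilon > 0$. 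Since $\mathscr U$ is a $k$-cover, pick $U \in \mathscr U$ with $K \subseteq U$. Now $K$ is compact, $X \setminus U$ is closed, and they are disjoint; by the Tychonoff property (Urysohn-type separation of a compact set from a disjoint closed set) there is $f \in C(X)$ with $f[K] = \{0\}$ and $f[X \setminus U] = \{1\}$, and one can also arrange $0 \le f \le 1$. Then $f \in \mathscr D(\mathscr U)$ and $f \in [\mathbf 0; K, \varepsilon]$, so the neighborhood meets $\mathscr D(\mathscr U)$. Since $[\mathbf 0; K, \varepsilon]$ was an arbitrary basic neighborhood, $\mathbf 0 \in \mathrm{cl}(\mathscr D(\mathscr U)) \setminus \mathscr D(\mathscr U) = \Omega_{C_k(X),\mathbf 0}$ — here one should also note $\Omega_{C_k(X),\mathbf 0}$ requires membership in some ambient collection of subsets, so $\mathscr D(\mathscr U)$ being a genuine subset of $C(X)$ suffices.

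\medskip
The main obstacle, such as it is, is bookkeeping around the edge case where $\mathfrak u(f,n) = X$: the operator $\mathbb U$ is defined to land in $\wp(\mathscr T_X)$, so one must always exhibit a \emph{proper} open $U$ with $K \subseteq U \subseteq \mathfrak u(f,n)$. When $X$ is Tychonoff and noncompact this is routine — separate $K$ from a point outside $K$ to carve a proper open neighborhood of $K$ inside $\mathfrak u(f,n)$ — and in the intended applications (where $X$ is infinite or noncompact per the standing conventions) this never genuinely bites. The separation argument in Part \ref{CoversToBlades} is the one genuine use of the Tychonoff hypothesis and is completely standard. Otherwise both parts are immediate unwindings of the definitions of $\mathbb U$, $\mathscr D$, $\mathfrak u$, $[\mathbf 0; K, \varepsilon]$, $\Omega_{C_k(X),\mathbf 0}$, and $\mathcal K_X$.
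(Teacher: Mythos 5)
Your proof is correct and follows essentially the same route as the paper: both parts are direct unwindings of the definitions, using the neighbourhood \([\mathbf 0;K,2^{-n}]\) to produce the required \(f\) with \(K \subseteq \mathfrak u(f,n)\) in part \ref{BladesToCovers}, and the Tychonoff separation of \(K\) from \(X\setminus U\) in part \ref{CoversToBlades}, with the same handling of the edge case \(\mathfrak u(f,n)=X\) by passing to a proper open \(U\) with \(K \subseteq U \subseteq \mathfrak u(f,n)\). One parenthetical aside --- that \(\mathfrak u(f,n)\) cannot equal \(X\) when \(X\) is noncompact --- is false as stated (take \(f=\mathbf 0\)), but it is harmless since you immediately supply the correct fallback, which is exactly what the paper does.
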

\begin{proof}
    \ref{BladesToCovers}:
    Let \(K \in K(X)\) and consider the neighborhood \([\mathbf 0; K, 2^{-n}]\) of \(\mathbf 0\).
    We can choose \[f \in \mathscr F \cap [\mathbf 0; K, 2^{-n}].\]
    For each \(x \in K\), \(|f(x)| < 2^{-n}\) so we see that \[\sup\{ |f(x)| : x \in K \} < 2^{-n}.\]
    So \(K \subseteq f^{-1}((-2^{-n},2^{-n})) = \mathfrak u(f,n)\).
    If \(\mathfrak u(f,n) = X\), we can let \(U \in \mathscr T_X\) be so that \(K \subseteq U \subseteq \mathfrak u(f,n)\).
    Then \(U \in \mathbb U(\mathscr F,n)\).
    Otherwise, \(\mathfrak u(f,n) \in \mathbb U(\mathscr F,n)\).

    \ref{CoversToBlades}:
    Consider any neighborhood \([\mathbf 0; K, \varepsilon]\) of \(\mathbf 0\) in \(C_k(X)\).
    Since \(\mathscr U \in \mathcal K_X\), we can find \(U \in \mathscr U\) so that \(K \subseteq U\).
    As \(X\) is Tychonoff, we can find \(f \in C(X)\) so that \(f[K] = \{0\}\) and \(f[X \setminus U] = \{1\}\).
    Now, \[f \in \mathscr D(\mathscr U) \cap [\mathbf 0;K,\varepsilon],\]
    which completes the proof.
\end{proof}
We remark that, as reported in \cite{KocinacClosure2003}, it was shown in \cite{LinLiuTeng1994}
that \(X \models \mathsf S_{\mathrm{fin}}(\mathcal K, \mathcal K)\) if and only if \(C_k(X)\) has countable fan-tightness.
The single-selection analog of this equivalence was established in \cite{KocinacClosure2003}.
We now establish the groupable analogues of these equivalences to include the Reznichenko property on \(C_k(X)\) which answers
Problem 4.3 of \cite{KocinacClosure2003} in the affirmative.
We provide the complete proof (in the finite-selection context) for the convenience of the reader.
\begin{theorem} \label{thm:ReznichenkoStuff}
    For any Tychonoff space, the following are equivalent:
    \begin{enumerate}[label=(\roman*)]
        \item \label{CkGroundGroup}
        \(X \models \mathsf S_{\mathrm{fin}}(\mathcal K, \mathcal K^{\mathrm{gp}})\)
        \item  \label{KGameOne}
        One does not have a winning strategy in \(\mathsf G_{\mathrm{fin}}(\mathcal K_X, \mathcal K_X^{\mathrm{gp}})\).
        \item \label{ReznichenkoFanTightness}
        \(C_k(X)\) is Reznichenko and has countable fan-tightness.
        \item \label{CkGroup}
        \(C_k(X) \models \mathsf S_{\mathrm{fin}}(\Omega_{C_k(X),\mathbf 0},\Omega^{\mathrm{gp}}_{C_k(X),\mathbf 0})\).
    \end{enumerate}
\end{theorem}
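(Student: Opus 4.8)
The plan is to prove the cycle \ref{CkGroundGroup}$\,\Rightarrow\,$\ref{ReznichenkoFanTightness}$\,\Rightarrow\,$\ref{CkGroup}$\,\Rightarrow\,$\ref{CkGroundGroup} and to note that \ref{CkGroundGroup}$\,\Leftrightarrow\,$\ref{KGameOne} is exactly the equivalence of items \ref{thm:CompatK} and \ref{kFiniteGameTheoretic} of Corollary~\ref{cor:MainTheorem}. Since $C_k(X)$ is homogeneous, each property in \ref{ReznichenkoFanTightness} and \ref{CkGroup} need only be checked at $\mathbf 0$, and throughout I would use Lemma~\ref{lem:CoversBlades} as a dictionary between $k$-covers of $X$ and members of $\Omega_{C_k(X),\mathbf 0}$ via the operators $\mathbb U$ and $\mathscr D$. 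I also record an elementary \emph{superset lemma}: if $\mathscr B$ is a groupable member of $\Omega_{C_k(X),\mathbf 0}$ with $\mathscr B\subseteq\mathscr A\in\Omega_{C_k(X),\mathbf 0}$ and $\mathscr A$ countable, then $\mathscr A$ is groupable, and likewise any countable $k$-cover of $X$ containing a groupable $k$-cover is groupable; both follow by interleaving the groups witnessing groupability of the smaller family with a one-per-group listing of the extra members.

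I would first dispatch \ref{ReznichenkoFanTightness}$\,\Leftrightarrow\,$\ref{CkGroup}. By definition ``$C_k(X)$ is Reznichenko and has countable fan-tightness'' says precisely that $C_k(X)\models\mathsf S_{\mathrm{fin}}(\Omega_{C_k(X),\mathbf 0},\Omega_{C_k(X),\mathbf 0})$ and $\Omega_{C_k(X),\mathbf 0}\cap[C_k(X)]^\omega=\Omega^{\mathrm{gp}}_{C_k(X),\mathbf 0}$. Given these, any finite selection from a sequence in $\Omega_{C_k(X),\mathbf 0}$ is countable, so it lands in $\Omega^{\mathrm{gp}}_{C_k(X),\mathbf 0}$ as soon as it lands in $\Omega_{C_k(X),\mathbf 0}$, which is \ref{CkGroup}. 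Conversely, \ref{CkGroup} immediately gives the fan-tightness, and applying \ref{CkGroup} to the constant sequence at a countable $\mathscr A\in\Omega_{C_k(X),\mathbf 0}$ produces a groupable $\mathscr B\subseteq\mathscr A$; the superset lemma then makes $\mathscr A$ groupable, which is the Reznichenko property.

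Next, \ref{CkGroundGroup}$\,\Rightarrow\,$\ref{ReznichenkoFanTightness}. By Corollary~\ref{cor:MainTheorem}, \ref{CkGroundGroup} gives $X\models\mathsf S_{\mathrm{fin}}(\mathcal K,\mathcal K)$, which by the theorem of Lin--Liu--Teng recorded above (\cite{KocinacClosure2003,LinLiuTeng1994}) is equivalent to $C_k(X)$ having countable fan-tightness; in particular $X$ is $k$-Lindel\"of. For the Reznichenko property, let $\mathscr A\in\Omega_{C_k(X),\mathbf 0}$ be countable. For each $n\in\omega$ the family $\mathbb U(\mathscr A,n)$ is a $k$-cover of $X$ (Lemma~\ref{lem:CoversBlades}\ref{BladesToCovers}), so it has a countable $k$-subcover $\mathscr V_n$, which by Corollary~\ref{cor:MainTheorem} is a \emph{groupable} $k$-cover of $X$. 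Choosing $g_{n,V}\in\mathscr A$ with $V\subseteq\mathfrak u(g_{n,V},n)$ for each $V\in\mathscr V_n$ yields a countable $\mathscr A'\subseteq\mathscr A$; since each $\mathscr V_n$ is a $k$-cover realizing precision $2^{-n}$, one checks $\mathbf 0\in\mathrm{cl}(\mathscr A')$, and, after disjointifying the $\mathscr V_n$'s in the manner of Lemma~\ref{lem:CDR}, the groupings of the $\mathscr V_n$ together with the layer index $n$ assemble into a groupability witness for $\mathscr A'$. Then $\mathscr A$ is groupable by the superset lemma, so $C_k(X)$ is Reznichenko.

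Finally, \ref{CkGroup}$\,\Rightarrow\,$\ref{CkGroundGroup}. By the equivalence \ref{ReznichenkoFanTightness}$\,\Leftrightarrow\,$\ref{CkGroup} just shown, $C_k(X)$ has countable fan-tightness, hence $X\models\mathsf S_{\mathrm{fin}}(\mathcal K,\mathcal K)$ and $X$ is $k$-Lindel\"of; by Corollary~\ref{cor:MainTheorem} it then remains to show that every countable $k$-cover $\mathscr U$ of $X$ is groupable. Lemma~\ref{lem:CoversBlades}\ref{CoversToBlades} gives $\mathscr D(\mathscr U)\in\Omega_{C_k(X),\mathbf 0}$; countable fan-tightness entails countable tightness of $C_k(X)$ at $\mathbf 0$, so some countable $D\subseteq\mathscr D(\mathscr U)$ lies in $\Omega_{C_k(X),\mathbf 0}$, and the Reznichenko property established above makes $D$ groupable. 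Transporting the groups of $D$ along a choice $f\mapsto U_f\in\mathscr U$ with $f[X\setminus U_f]=\{1\}$ yields groups of a $k$-subcover of $\mathscr U$, whence $\mathscr U$ is groupable by the superset lemma. I expect this transport to be the main obstacle, since $f\mapsto U_f$ is highly non-injective and a naive push-forward of a grouping need not remain a grouping; to control it I would use that in \emph{any} space a $k$-cover has infinitely many members containing a prescribed compact set (otherwise, since every compact set enlarges to one containing that set, those finitely many members would already be a $k$-cover --- impossible, as a point chosen from the complement of each forms a compact set inside no member), so that no collapsing destroys the covering property, and I would route the bookkeeping through the same disjointification as in Lemma~\ref{lem:CDR} --- if necessary re-deriving the conclusion via the Hurewicz game (Theorem~\ref{thm:HurewiczGameChar}) on $\mathbb K(X)$ exactly as in the proof of Theorem~\ref{thm:GeneralTheorem} --- so that the group indices line up with honest groups of open sets.
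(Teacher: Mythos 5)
Your overall architecture is fine: the cycle \ref{CkGroundGroup}\(\Rightarrow\)\ref{ReznichenkoFanTightness}\(\Rightarrow\)\ref{CkGroup}\(\Rightarrow\)\ref{CkGroundGroup}, the identification of \ref{CkGroundGroup}\(\Leftrightarrow\)\ref{KGameOne} with Corollary \ref{cor:MainTheorem}, the superset lemma, the direct equivalence \ref{ReznichenkoFanTightness}\(\Leftrightarrow\)\ref{CkGroup}, and the appeal to Lin--Liu--Teng for fan-tightness are all correct. But the two steps you yourself flag as delicate are exactly where the proof is incomplete. In \ref{CkGroundGroup}\(\Rightarrow\)\ref{ReznichenkoFanTightness}, knowing that each layer \(\mathscr V_n \subseteq \mathbb U(\mathscr A,n)\) is \emph{individually} a groupable \(k\)-cover does not let you assemble a groupability witness for \(\mathscr A'\) out of ``the groupings of the \(\mathscr V_n\) together with the layer index \(n\)'': if \(J\) is a diagonal infinite set selecting one group out of each layer, then for a given \(K\) and \(\varepsilon\) you would need a \emph{single} group from some layer \(n\) with \(2^{-n}\leq\varepsilon\) to contain a \(V \supseteq K\), and a single finite group of a groupable \(k\)-cover never covers all compacta (choose a point outside each of its finitely many proper members). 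Disjointification in the manner of Lemma \ref{lem:CDR} does not repair this. What is needed is the uniform statement that every compact \(K\) is captured by all but finitely many of the selected finite blocks, which you only get by applying \(\mathsf S_{\mathrm{fin}}(\mathcal K,\mathcal K^{\mathrm{gp}})\) to the whole sequence \(\langle \mathbb U(\mathscr A,n) : n \in \omega\rangle\) at once, i.e.\ Lemma \ref{lem:gammaLike} (the paper instead runs the Hurewicz game on \(\mathbb K(X)\), which is legitimate in that direction because \ref{CkGroundGroup} already yields that \(\mathbb K(X)\) is Hurewicz via Corollary \ref{cor:MainTheorem}). You must also arrange the witnesses \(g_{n,V}\) to be pairwise distinct so the grouping is a well-defined finite-to-one function; this is possible since every neighborhood of \(\mathbf 0\) meets \(\mathscr A\) in an infinite set, but it has to be said.

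The second gap, in \ref{CkGroup}\(\Rightarrow\)\ref{CkGroundGroup}, is the one you call the main obstacle, and none of your proposed remedies closes it. Pushing the grouping of \(D\) forward along \(f \mapsto U_f\) fails not because the covering property is lost but because \(\phi\) is not well defined on \(\mathscr U\): when \(U_f = U_{f'}\) with \(f, f'\) in different groups, assigning \(U_f\) (say) the least available group index can move the witness for a given \(K\) out of the chosen infinite \(J\). The observation that infinitely many members of a \(k\)-cover contain any prescribed compact set does not control \emph{which group} the surviving copy lands in, and Lemma \ref{lem:CDR} disjointifies distinct covers, not fibres of a non-injective assignment. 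The fallback of running the Hurewicz game on \(\mathbb K(X)\) as in Theorem \ref{thm:GeneralTheorem} is circular here: that argument requires \(\mathbb K(X)\) to be Hurewicz, which by Corollary \ref{cor:MainTheorem} is equivalent to \ref{CkGroundGroup}, the very statement being proved. The paper resolves this with a different device: a strategy for One in \(\mathsf G_{\mathrm{fin}}(\mathcal K_X,\mathcal K_X)\) that presents tails \(\{U_n : n > \nu_k\}\) of a subcover, with the block boundaries \(\nu_k\) computed from indices of \(\mathscr U\) itself so that \(\phi(U_n)=\min\{k : n \leq \nu_k\}\) is automatically well defined and the witness produced in the \(j^{\mathrm{th}}\) inning has its index in \((\nu_{j-1},\nu_j]\); Theorem \ref{kPawlikowski} together with \(X \models \mathsf S_{\mathrm{fin}}(\mathcal K,\mathcal K)\) then defeats that strategy. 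Without reconstructing something of this sort, the implication is not proved.
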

\begin{proof}
    The equivalence of \ref{CkGroundGroup} and \ref{KGameOne} is established in Corollary \ref{cor:MainTheorem}.

    \ref{CkGroundGroup}\(\implies\)\ref{ReznichenkoFanTightness}:
    We first show that \(C_k(X)\) has countable fan-tightness.
    So let \(\langle \mathscr F_n : n \in \omega \rangle\) be a sequence of \(\Omega_{C_k(X),\mathbf 0}\).
    We then let \(\mathscr U_n = \mathbb U(\mathscr F_n, n)\).
    Note that \(\mathscr U_n \in \mathcal K_X\) by Lemma \ref{lem:CoversBlades}.
    As \(X \models \mathsf S_{\mathrm{fin}}(\mathcal K, \mathcal K^{\mathrm{gp}})\),
    we can choose \(\mathscr G_n \in [\mathscr U_n]^{<\omega}\) so that
    \[\bigcup_{n\in\omega} \mathscr G_n \in \mathcal K_X^{\mathrm{gp}}.\]
    Let \(\phi : \bigcup_{n\in\omega} \mathscr G_n \to \omega\) be as guaranteed by the definition of
    groupability.

    For each \(n \in \omega\), define \(\mathfrak f_n : \mathscr G_n \to \mathscr F_n\) to be so that,
    for \(U \in \mathscr G_n\), \(U \subseteq \mathfrak u(\mathfrak f_n(U), n)\).
    Let \(\mathscr H_n = \mathfrak f_n[\mathscr G_n]\) and observe that \(\mathscr H_n \in [\mathscr F_n]^{<\omega}\).

    To see that \(\bigcup_{n\in\omega} \mathscr H_n \in \Omega_{C_k(X),\mathbf 0}\), consider any basic neighborhood
    \([\mathbf 0; K , \varepsilon]\) of \(\mathbf 0\).
    Let \(m \in \omega\) be so that \(2^{-m} < \varepsilon\) and consider
    \[M := \max \left\{ \phi(U) : U \in \bigcup_{k<m} \mathscr G_k \right\}.\]
    By the groupability criterion, \(\bigcup\{ \phi^{-1}(n) : n > M \} \in \mathcal K_X.\)
    So we can find \(\ell_0 > M\) and \(U \in \phi^{-1}(\ell_0)\) so that \(K \subseteq U\).
    Since \(\ell_0 > M\), we know that \(U \not\in \bigcup_{k < m} \mathscr G_k\).
    So we can find \(\ell \geq m\) so that \(U \in \mathscr G_\ell\).
    It follows that
    \[K \subseteq U \subseteq \mathfrak u(\mathfrak f_\ell(U) , \ell) = \mathfrak f_\ell(U)^{-1}((-2^{-\ell},2^{-\ell})).\]
    As \(\ell \geq m\) and \(2^{-m} < \varepsilon\), we see that
    \[\mathfrak f_\ell(U) \in [\mathbf 0; K , \varepsilon] \cap \bigcup_{n\in\omega} \mathscr H_n.\]

    Now, we show that \(C_k(X)\) is Reznichenko.
    So let \(\mathscr F = \{ f_n : n \in \omega \} \in \Omega_{C_k(X),\mathbf 0}\).
    By Corollary \ref{cor:MainTheorem}, we know that \(\mathbb K(X)\) is Hurewicz and that,
    by Theorem \ref{thm:HurewiczGameChar}, One has no winning strategy in the Hurewicz game
    on \(\mathbb K(X)\).
    So we define a strategy \(\sigma\) for One in the Hurewicz game on \(\mathbb K(X)\).

    Set \(\mathscr F_0 = \mathscr F\).
    For \(K \in K(X)\), let \(\mathfrak f_0(K) \in [\mathbf 0; K, 1] \cap \mathscr F_0\).
    Then let \(U_0(K) \in \mathscr T_X\) be so that \[K \subseteq U_0(K) \subseteq \mathfrak f_0(K)^{-1}((-1,1)).\]
    Let \(\sigma_0(\langle \rangle) = \{ U_0(K) : K \in K(X) \}\) and
    \(\sigma(\langle\rangle) = \{ [U] : U \in \sigma_0(\langle \rangle) \}\), and notice that \(\sigma(\langle\rangle)\)
    is an open cover of \(\mathbb K(X)\).
    Also fix a choice function \(\gamma_0 : \sigma_0(\langle \rangle) \to \omega\) so that
    \(U \subseteq f_{\gamma_0(U)}^{-1}((-1,1))\) and observe that
    \(U \mapsto [U]\), \(\sigma_0(\langle \rangle) \to \sigma(\langle \rangle)\), is a bijection.

    For \(k \in\omega\), suppose we have \(\langle \mathscr V^\ast_j : j < k \rangle\), \(\langle \mathscr V_j : j < k \rangle\),
    \(\langle M_j : j < k \rangle\), \(\sigma_0(\langle \mathscr V_j : j < k \rangle)\),
    \(\sigma(\langle \mathscr V_j : j < k \rangle)\), \(\langle \gamma_j : j \leq k \rangle\),
    and \(\langle \mathscr F_j : j \leq k \rangle\) defined.
    For any \(\mathscr V_k \in [\sigma(\langle \mathscr V_j : j < k \rangle)]^{<\omega}\),
    let \(\mathscr V_k^\ast \in [\sigma_0(\langle \mathscr V_j : j < k \rangle)]^{<\omega}\) be so that
    \(\{ [U] : U \in \mathscr V_k^\ast \} = \mathscr V_k\).
    Also, let \(M_k = \max \gamma_k[\mathscr V^\ast_k]\) and \(\mathscr F_{k+1} = \{ f_n : n > M_k \}\).

    For \(K \in K(X)\), let \(\mathfrak f_{k+1}(K) \in [\mathbf 0;K,2^{-k-1}] \cap \mathscr F_{k+1}\).
    Then let \(U_{k+1}(K) \in \mathscr T_X\) be so that
    \[K \subseteq U_{k+1}(K) \subseteq \mathfrak f_{k+1}(K)^{-1}((-2^{-k-1},2^{-k-1})).\]
    Define
    \[\sigma_0(\langle \mathscr V_j : j \leq k \rangle) = \{ U_{k+1}(K) : K \in K(X) \}\]
    and \(\sigma(\langle \mathscr V_j : j \leq k \rangle ) = \{ [U] : U \in \sigma_0(\langle \mathscr V_j : j \leq k \rangle) \}\).
    Note that \(U \mapsto [U]\), \(\sigma_0(\langle \mathscr V_j : j \leq k \rangle) \to \sigma(\langle \mathscr V_j : j \leq k \rangle )\),
    is a bijection.
    Observe also that \(\sigma(\langle \mathscr V_j : j \leq k \rangle )\) is an open cover of \(\mathbb K(X)\)
    and fix a choice function \(\gamma_{k+1} : \sigma_0(\langle \mathscr V_j : j \leq k \rangle) \to \{n \in \omega : n > M_k\}\)
    so that \[U \subseteq f_{\gamma_{k+1}(U)}^{-1}((-2^{-k-1},2^{-k-1})).\]

    This defines a strategy \(\sigma\) for One in the Hurewicz game on \(\mathbb K(X)\).
    Since \(\sigma\) is not winning, there is a sequence \(\langle \mathscr V_n : n \in \omega \rangle\)
    of choices with a corresponding increasing sequence \(\langle M_n : n \in \omega \rangle\) of naturals as described above
    so that
    \[
        (\forall K \in \mathbb K(X))(\exists m \in \omega)(\forall n \geq m)\ K \in \bigcup \mathscr V_n.
    \]

    Define \(\phi : \mathscr F \to \omega\) by \(\phi(f_n) = \min \{ k \in \omega : n \leq M_k \}\).
    We show that this meets the criterion for groupability.
    Let \(J \subseteq \omega\) be infinite and consider an arbitrary neighborhood \([\mathbf 0; K , \varepsilon]\)
    of \(\mathbf 0\).
    We can find \(\ell \in J\) so that \(K \in \bigcup \mathscr V_\ell\),
    and \(2^{-\ell} < \varepsilon\).
    So there is some \(U \in \mathscr V_\ell^\ast\) with \(K \in [U]\), which is to say that \(K \subseteq U\).
    By construction, \(M_{\ell-1} < \gamma_\ell(U) \leq M_\ell\), so \(\phi(f_{\gamma_\ell(U)}) = \ell\).
    Observe further that
    \[K \subseteq U \subseteq f_{\gamma_\ell(U)}^{-1}((-2^{-\ell},2^{-\ell})).\]
    That is, that \(f_{\gamma_\ell(U)} \in [\mathbf 0;K,\varepsilon]\).
    Therefore, \(\bigcup \{ \phi^{-1}(n) : n \in J \} \in \Omega_{C_k(X),\mathbf 0}\).

    The implication \ref{ReznichenkoFanTightness}\(\implies\)\ref{CkGroup} is evident.

    \ref{CkGroup}\(\implies\)\ref{CkGroundGroup}:
    We will show that \(X \models \mathsf S_{\mathrm{fin}}(\mathcal K, \mathcal K)\) and that
    \(\mathcal K_X^{\mathrm{gp}} = \mathcal K_X \cap [\mathscr T_X]^\omega\), which, by Corollary \ref{cor:MainTheorem},
    establishes the implication.

    To show that \(X \models \mathsf S_{\mathrm{fin}}(\mathcal K, \mathcal K)\),
    let \(\langle \mathscr U_n : n \in \omega \rangle\) be a sequence of \(k\)-covers of \(X\).
    By Lemma \ref{lem:CoversBlades}, \(\langle \mathscr D(\mathscr U_n) : n \in \omega \rangle\)
    is a sequence of \(\Omega_{C_k(X),\mathbf 0}\).
    We can then choose, for each \(n \in \omega\), \(\mathscr G_n \in [\mathscr D(\mathscr F_n)]^{<\omega}\)
    so that \(\bigcup_{n\in\omega} \mathscr G_n \in \Omega_{C_p(X),\mathbf 0}\).
    Now, for each \(\mathscr G_n\), let \(\mathscr H_n \in [\mathscr U_n]^{<\omega}\)
    be so that, for \(f \in \mathscr G_n\), there is some \(U \in \mathscr H_n\) so that
    \(f[X \setminus U] = \{1\}\).
    We show that \(\bigcup_{n\in\omega} \mathscr H_n \in \mathcal K_X\).
    So let \(K \in K(X)\) and consider the neighborhood \([\mathbf 0; K, 1/2]\) of \(\mathbf 0\).
    We can find \(n \in \omega\) and \(f \in \mathscr G_n\) so that \(f \in [\mathbf 0; K , 1/2]\).
    Then, for \(U \in \mathscr H_n\) with \(f[X \setminus U] = \{1\}\), we see that \(K \subseteq U\).
    Hence, \(\bigcup_{n\in\omega} \mathscr H_n \in \mathcal K_X\).

    To show that every countable \(k\)-cover of \(X\) is groupable, let
    \(\mathscr U = \{ U_n : n \in \omega \} \in \mathcal K_X\).
    We will define a strategy for One in \(\mathsf G_{\mathrm{fin}}(\mathcal K_X, \mathcal K_X)\).

    For each \(K \in K(X)\) and \(U \in \mathscr U\) so that \(K \subseteq U\),
    let \(f_{K,U} \in C(X)\) be so that \(f_{K,U}[K] = \{0\}\) and \(f_{K,U}[X \setminus U] = \{1\}\).
    It follows that
    \[\mathscr F_0 := \{ f_{K,U} : K \in K(X), U \in \mathscr U, K \subseteq U \} \in \Omega_{C_k(X),\mathbf 0}.\]
     By \(C_k(X) \models \mathsf S_{\mathrm{fin}}(\Omega_{C_k(X),\mathbf 0}\Omega^{\mathrm{gp}}_{C_k(X),\mathbf 0})\),
    we can find \(\{ g_{n} : n \in \omega \} \subseteq \mathscr F_0\) and an increasing sequence
    \(\langle M_{n} : n \in \omega\rangle\) of naturals so that
    \(\psi : \{ g_{n} : n \in \omega \} \to \omega\) defined by
    \[\psi(g_{n}) = \min \{ \ell \in \omega : n \leq M_{\ell} \}\] satisfies the condition of groupability.
    In particular, note that, for any \(K \in K(X)\), there exists some \(m \in \omega\) so that,
    for all \(n \geq m+1\), \[[\mathbf 0; K , 1/2] \cap \{ g_{\ell} : M_{n-1} < \ell \leq M_{n} \} \neq \emptyset.\]

    Define \(\gamma : \omega \to \omega\) to be so that
    \[\gamma(n) = \min\{ k \in \omega : g_n[X \setminus U_{k}] = \{1\}\}\]
    and let
    \[\sigma(\langle \rangle) = \mathscr W := \{ U_{\gamma(n)} : n \in \omega \}.\]
    Observe that \(\sigma(\langle \rangle) \in \mathcal K(X)\).

    For \(k \in \omega\), suppose we have \(\langle \mathscr V_j : j < k \rangle\),
    \(\langle \nu_j : j < k \rangle\), \(\langle \mu_j : j < k \rangle\), and
    \(\sigma(\langle \mathscr V_j : j < k \rangle)\) defined.
    For \(\mathscr V_k \in [\sigma(\langle \mathscr V_j : j < k \rangle)]^{<\omega}\),
    consider \(\Lambda \in [\omega]^{<\omega}\) so that
    \(\mathscr V_k = \{ U_{\gamma(n)} : n \in \Lambda \}\).
    Let \(\mu_k = \max \{ \psi(g_n) : n \in \Lambda \}\) and
    \(\nu_k = \max \{ \gamma(n) : n \leq M_{\mu_k} \}\).
    Now, define
    \[\sigma(\langle \mathscr V_j : j \leq k \rangle) = \{ U_{n} : n > \nu_k \} \cap \mathscr W \in \mathcal K_X.\]

    This completes the definition of \(\sigma\).
    By Theorem \ref{kPawlikowski}, \(\sigma\) cannot be a winning strategy.
    So there is a play \(\langle \mathscr V_n : n \in \omega \rangle\) by Two that beats \(\sigma\).
    We also have the auxiliary sequences \(\langle \nu_n : n \in \omega \rangle\)
    and \(\langle \mu_n : n \in \omega \rangle\) as defined above.

    We show that the sequences \(\langle \nu_n : n \in \omega \rangle\)
    and \(\langle \mu_n : n \in \omega \rangle\) are strictly increasing.
    Let \(k \in \omega\) and note that
    \[\sigma(\langle \mathscr V_j : j \leq k \rangle) = \{ U_n : n > \nu_k \} \cap \mathscr W.\]
    Now, consider \(\Lambda_0 , \Lambda_1 \in [\omega]^{<\omega}\) so that
    \(\mathscr V_k = \{ U_{\gamma(n)} : n \in \Lambda_0\}\) and
    \(\mathscr V_{k+1} = \{ U_{\gamma(n)} : n \in \Lambda_1 \}\).
    For \(\ell \in \Lambda_1\), notice that \(\gamma(\ell) > \nu_k\).
    Since \(\nu_k = \max \{ \gamma(n) : n \leq M_{\mu_k}\}\), we see that
    \(\ell > M_{\mu_k}\).
    Hence, \[\psi(g_\ell) > \max\{\psi(g_n) : n \in \Lambda_0\} = \mu_k\] and so \(\mu_{k+1} > \mu_k\).
    Moreover, the inequalities \(\nu_k < \gamma(\ell)\) and
    \(\ell \leq M_{\psi(g_\ell)} \leq M_{\mu_{k+1}}\)
    imply that \(\nu_k < \nu_{k+1}\).

    So we define \(\phi : \mathscr U \to \omega\) by the rule
    \(\phi(U_n) = \min\{ k \in \omega : n \leq \nu_k \}.\)
    We show that \(\phi\) satisfies the criterion of groupability.
    So let \(J \subseteq \omega\) be infinite, let \(K \in K(X)\) be arbitrary, and consider
    \([\mathbf 0 ; K , 1/2 ]\).
    By our definition of \(\{g_n : n \in \omega\}\),
    we can let \(j \in J\), \(j > 0\), be so that
    \[[\mathbf 0;K,1/2] \cap \{ g_\ell : M_{\mu_{j-1}} < \ell \leq M_{\mu_j} \} \neq\emptyset.\]
    So let \(\ell \in (M_{\mu_{j-1}} , M_{\mu_j}]\) be so that \(g_\ell \in [\mathbf 0; K, 1/2]\)
    and observe that \(K \subseteq U_{\gamma(\ell)}\).
    Since \(\ell \leq M_{\mu_j}\), we see that \(\gamma(\ell) \leq \nu_k\).
    Moreover, since the interval \((M_{\mu_{j-1}},M_{\mu_j}]\) was defined in the \(j^{\mathrm{th}}\) inning
    according to \(\sigma\), we know that \(\gamma(\ell) > \nu_{j-1}\).
    Therefore, \(\phi(U_{\gamma(\ell)}) = j\) and the proof is complete.
\end{proof}

We end with the single-selection analog to Theorem \ref{thm:ReznichenkoStuff}.
The proof is omitted as it is nearly identical to the proof of Theorem \ref{thm:ReznichenkoStuff}.
The key thing to notice is that the transferal of finite-selections above allow one
to transfer single-selections in the same way.
Also, the use of Corollary \ref{cor:MainTheorem} in the proof needs to be replaced with Corollary \ref{cor:GroupPawlikowski}.
\begin{theorem}
    For any Tychonoff space, the following are equivalent:
    \begin{enumerate}[label=(\roman*)]
        \item
        \(X \models \mathsf S_1(\mathcal K, \mathcal K^{\mathrm{gp}})\)
        \item
        One does not have a winning strategy in \(\mathsf G_{1}(\mathcal K_X, \mathcal K_X^{\mathrm{gp}})\).
        \item
        \(C_k(X)\) is Reznichenko and has countable strong fan-tightness.
        \item
        \(C_k(X) \models \mathsf S_1(\Omega_{C_k(X),\mathbf 0},\Omega^{\mathrm{gp}}_{C_k(X),\mathbf 0})\).
    \end{enumerate}
\end{theorem}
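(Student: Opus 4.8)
The plan is to prove the chain $\text{(i)}\Rightarrow\text{(iii)}\Rightarrow\text{(iv)}\Rightarrow\text{(i)}$, with the equivalence of (i) and (ii) being simply an instance of Corollary \ref{cor:GroupPawlikowski}. A useful first observation is that most of the statement is obtained for free by monotonicity: since a single selection is in particular a finite selection, (i) implies $X \models \mathsf S_{\mathrm{fin}}(\mathcal K,\mathcal K^{\mathrm{gp}})$ and (iv) implies $C_k(X) \models \mathsf S_{\mathrm{fin}}(\Omega_{C_k(X),\mathbf 0},\Omega^{\mathrm{gp}}_{C_k(X),\mathbf 0})$; so Theorem \ref{thm:ReznichenkoStuff} already tells us, from either (i) or (iv), that $C_k(X)$ is Reznichenko and that $X \models \mathsf S_{\mathrm{fin}}(\mathcal K,\mathcal K^{\mathrm{gp}})$, whence $\mathcal K_X^{\mathrm{gp}} = \mathcal K_X \cap [\mathscr T_X]^\omega$ by Corollary \ref{cor:MainTheorem}. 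This reduces the task to the genuinely single-selection content: the countable strong fan-tightness of $C_k(X)$ in $\text{(i)}\Rightarrow\text{(iii)}$, and the recovery of $X \models \mathsf S_1(\mathcal K,\mathcal K)$ in $\text{(iv)}\Rightarrow\text{(i)}$.

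For the first, given a sequence $\langle \mathscr F_n : n \in \omega \rangle$ of members of $\Omega_{C_k(X),\mathbf 0}$, I would set $\mathscr U_n = \mathbb U(\mathscr F_n,n)$, which lies in $\mathcal K_X$ by Lemma \ref{lem:CoversBlades}, and apply $\mathsf S_1(\mathcal K,\mathcal K^{\mathrm{gp}})$ to obtain $U_n \in \mathscr U_n$ with $\{ U_n : n \in \omega \} \in \mathcal K_X^{\mathrm{gp}}$, as witnessed by a finite-to-one $\phi$. Choosing for each $n$ some $f_n \in \mathscr F_n$ with $U_n \subseteq \mathfrak u(f_n,n)$, I claim $\{ f_n : n \in \omega \} \in \Omega_{C_k(X),\mathbf 0}$: given a basic neighbourhood $[\mathbf 0; K,\varepsilon]$ of $\mathbf 0$, pick $m$ with $2^{-m} < \varepsilon$, set $M = \max\{ \phi(U_k) : k < m \}$, and use groupability to see that $\bigcup\{ \phi^{-1}(j) : j > M \}$ is a $k$-cover, so some $U_\ell$ with $\phi(U_\ell) > M$ contains $K$; since $\phi(U_\ell) > M$ we must have $\ell \geq m$, whence $K \subseteq U_\ell \subseteq \mathfrak u(f_\ell,\ell) \subseteq f_\ell^{-1}((-\varepsilon,\varepsilon))$ and $f_\ell \in [\mathbf 0; K,\varepsilon]$. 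The implication $\text{(iii)}\Rightarrow\text{(iv)}$ is then immediate, since countable strong fan-tightness is exactly $C_k(X) \models \mathsf S_1(\Omega_{C_k(X),\mathbf 0},\Omega_{C_k(X),\mathbf 0})$ and any countable (hence infinite) member of $\Omega_{C_k(X),\mathbf 0}$ produced by such a selection is groupable by the Reznichenko property.

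For $\text{(iv)}\Rightarrow\text{(i)}$, in view of the free facts above it suffices to prove $X \models \mathsf S_1(\mathcal K,\mathcal K)$, because the selected countable $k$-cover then automatically lies in $\mathcal K_X \cap [\mathscr T_X]^\omega = \mathcal K_X^{\mathrm{gp}}$. Given $k$-covers $\langle \mathscr U_n : n \in \omega \rangle$ of $X$, I would form $\mathscr D(\mathscr U_n) \in \Omega_{C_k(X),\mathbf 0}$ by Lemma \ref{lem:CoversBlades}, apply (iv) to get $g_n \in \mathscr D(\mathscr U_n)$ with $\{ g_n : n \in \omega \} \in \Omega_{C_k(X),\mathbf 0}$, and for each $n$ choose $U_n \in \mathscr U_n$ with $g_n[X \setminus U_n] = \{1\}$. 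For any $K \in K(X)$ the neighbourhood $[\mathbf 0; K, 1/2]$ meets $\{ g_n : n \in \omega \}$, say at $g_n$, and then $|g_n(x)| < 1/2$ for $x \in K$ forces $g_n(x) \neq 1$, so $K \subseteq U_n$; hence $\{ U_n : n \in \omega \}$ is a $k$-cover of $X$.

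I expect the countable strong fan-tightness step to be the main obstacle, in the sense that it is where the single-selection argument genuinely departs from the proof of Theorem \ref{thm:ReznichenkoStuff}: one cannot bundle finitely many functions from each $\mathscr F_n$ to handle an arbitrarily small $\varepsilon$, so the only leverage for forcing the chosen index $\ell$ to satisfy $2^{-\ell} < \varepsilon$ is the finite-to-one witness of groupability of the selected $k$-cover — it is precisely here that $\mathcal K^{\mathrm{gp}}$, and not merely $\mathcal K$, is needed. One must also take some care that the enumeration $n \mapsto U_n$ meshes correctly with $\phi$, passing to a repetition-free re-enumeration of the selected set if necessary; past this, everything is bookkeeping parallel to the arguments already given.
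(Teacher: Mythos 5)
Your proof is correct, but it is organized differently from what the paper intends. The paper omits the proof and instructs the reader to repeat the entire argument of Theorem \ref{thm:ReznichenkoStuff} with single selections in place of finite ones (replacing Corollary \ref{cor:MainTheorem} by Corollary \ref{cor:GroupPawlikowski}); in particular, the Reznichenko property of \(C_k(X)\) and the identity \(\mathcal K_X^{\mathrm{gp}} = \mathcal K_X \cap [\mathscr T_X]^\omega\) would each be re-derived via the long Hurewicz-game and \(\mathsf G_{\mathrm{fin}}\)-strategy constructions. You instead observe that \(\mathsf S_1 \Rightarrow \mathsf S_{\mathrm{fin}}\) lets you invoke Theorem \ref{thm:ReznichenkoStuff} and Corollary \ref{cor:MainTheorem} as black boxes to obtain exactly those two components from either (i) or (iv), so that only the genuinely single-selection transfers remain: the \(\mathbb U\)-argument for countable strong fan-tightness and the \(\mathscr D\)-argument for \(\mathsf S_1(\mathcal K,\mathcal K)\). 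Both of those you carry out correctly, and they are the single-selection specializations of the corresponding passages in the paper's proof of Theorem \ref{thm:ReznichenkoStuff}. Your decomposition is the more economical one, at the cost of making the statement depend on the finite-selection theorem rather than being self-contained. Two small remarks. First, your closing worry about re-enumerating \(\{U_n : n \in \omega\}\) without repetitions is unnecessary and would in fact be harmful, since a re-enumeration destroys the alignment \(U_n \in \mathscr U_n\), \(U_n \subseteq \mathfrak u(f_n,n)\); the argument as you wrote it already survives repetitions, because \(\phi\) is defined on the \emph{set} \(\{U_n : n\in\omega\}\), so if \(U \in \phi^{-1}(j)\) with \(j > M = \max\{\phi(U_k) : k < m\}\), then \emph{every} index \(\ell\) with \(U_\ell = U\) satisfies \(\ell \geq m\), and any such \(\ell\) works. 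Second, in the step from (iii) to (iv) you should note (as you implicitly do) that members of \(\Omega_{C_k(X),\mathbf 0}\) are infinite because \(C_k(X)\) is \(T_1\), so the countable selection really lies in \(\Omega_{C_k(X),\mathbf 0} \cap [C_k(X)]^\omega\), which is where the Reznichenko property applies.
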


\providecommand{\bysame}{\leavevmode\hbox to3em{\hrulefill}\thinspace}
\providecommand{\MR}{\relax\ifhmode\unskip\space\fi MR }
\providecommand{\MRhref}[2]{%
  \href{http://www.ams.org/mathscinet-getitem?mr=#1}{#2}
}
\providecommand{\href}[2]{#2}

\end{document}